\newmdenv[tikzsetting={draw=black, line width=0.5pt, dash pattern=on 1pt off 1pt, dashed}, linecolor=white, outerlinewidth=1pt]{examplebox}
\newcommand{\cA}{\mathcal{A}}
\newcommand{\cB}{\mathcal{B}}
\newcommand{\cC}{\mathcal{C}}
\newcommand{\cE}{\mathcal{E}}
\newcommand{\cF}{\mathcal{F}}
\newcommand{\cH}{\mathcal{H}}
\newcommand{\cL}{\mathcal{L}}
\newcommand{\cN}{\mathcal{N}}
\newcommand{\cP}{\mathcal{P}}
\newcommand{\cT}{\mathcal{T}}
\newcommand{\cU}{\mathcal{U}}
\newtheorem{theorem}{Theorem}
\newtheorem{lemma}{Lemma}
\newtheorem{proposition}{Proposition}
\newtheorem{corollary}{Corollary}
\theoremstyle{definition}
\newtheorem{definition}{Definition}
\theoremstyle{plain}
\theoremstyle{remark}
\newtheorem{remark}{Remark}
\newcommand{\Prb}{\mathbb{P}}
\newcommand{\figurepath}{}
\newcommand*{\myov}[1]{\overbracket[0.8pt][0pt]{#1}}
\newcommand*{\myund}[1]{\underbracket[0.8pt][0pt]{#1}}
\let\limsup\undefined
\let\liminf\undefined
\DeclareMathOperator*\liminf{\myund{\lim}}
\DeclareMathOperator*\limsup{\myov{\lim}}
\begin{document}

\title{SCoRE sets: A versatile framework for simultaneous inference}
\author{Fabian J.E. Telschow$^1$, Junting Ren$^2$ and
	Armin Schwartzman$^{2,3}$ \\[5mm]
	$^1$Department of Mathematics, Humboldt University of Berlin \\
	$^2$Division of Biostatistics, University of California, San Diego \\
	$^3$Hal{\i}c{\i}o\u{g}lu Data Science Institute, University of California, San Diego \\[5mm]
}
\date{\today}
\maketitle
 
\begin{abstract}
	We study asymptotic statistical inference in the space of bounded
functions endowed with the supremums norm over an arbitrary metric
space $S$ using a novel concept: Simultaneous COnfidence
Region of Excursion (SCoRE) Sets.
They simultaneously quantify the uncertainty of several lower and upper
excursion sets of a target function. We investigate their connection to
multiple hypothesis tests controlling the familywise error rate in the strong
sense and show that they grant a unifying perspective on several statistical
inference tools such as simultaneous confidence bands, quantification of
uncertainties in level set estimation, for example, CoPE sets, and multiple
hypothesis testing over $S$, for example,
finding relevant differences or regions of equivalence within $S$.
In particular, our abstract setting allows us to refine and reduce the assumptions in recent
articles on CoPE sets and relevance and equivalence testing using the supremums norm.
\end{abstract}


\section{Introduction}
Historically there has been a large body of work connecting hypothesis tests
and confidence sets. The earliest work on this topic is \cite{Neyman:1937confidence} which
developed the well known \textit{duality between families of point hypothesis tests and
confidence sets}, compare \cite[Thm 3.5.1]{Lehmann:2005testing} for a modern treatment.
Later in \cite{Aitchison:1964confidence,Aitchison:1965likelihood,Gabriel:1969simultaneous}
simultaneous confidence sets have been used to derive tests for more complex hypotheses,
especially in multiple testing. The opposite direction starting from stagewise multiple testing
procedures and deriving simultaneous confidence sets has been studied, among others, in
\cite{Stefansson:1988, Hayter:1994relationship, Holm:1999, Guilbaud:2008, Magirr:2013}.

Most of the above works have in common that they treat statistical hypothesis testing as the fundamental
paradigm and view confidence sets as a derived concept.
However, there is an intuitive appeal of confidence intervals over hypothesis testing
which is nicely expressed in R. Little's comment to the ASA statement on the $p$-value
\cite{Wasserstein:2016_ASA}:
\textit{``[...] I teach a basic course in biostatistics to public health students.
Confidence intervals are no problem--ideas like margin of error have even entered the vernacular.
The difficulties begin with hypothesis testing. [...]"}.
Implicitly, this intuition appeared as well in the
works on equivalence testing where the null hypothesis is
that a parameter (for example a population mean) is not contained
in a known interval because the first equivalence tests were based on confidence intervals
\cite{Westlake:1972, Schuirmann:1981}. Only later tests have been derived
from the intersection-union principle \cite{Schuirmann:1987,
Hauck:1992types}. A thoughtful discussion of the connection between confidence intervals
and equivalence tests and possible pitfalls is presented in \cite{Berger:1996}.

\paragraph{Our Contributions.}
In this article we aim to strengthen the confidence set viewpoint.
We assume that the following mathematical objects are given: an unknown target function $\mu:S\rightarrow \mathbb{R}$,
an estimator $\hat\mu$ of $\mu$ that satisfies a weakened version of a uniform limit theorem (ULT)
in the space of bounded functions $\ell^\infty(S)$ endowed with the supremums norm and
two sets $\cA$ and $\cB$ of real-valued functions over $S$.
Our main theorem (Theorem \ref{thm:MainSCoPES}) then provides the limit distribution
that the canonical estimates $\hat\cU_a$ and $\hat\cL_b$ derived from the ULT
of the upper $\cU_a$ and lower $\cL_b$ excursion sets above $a$ and below $b$ of $\mu$, respectively, simultaneously satisfy
$\hat\cU_a \subseteq \cU_a$ and $\hat\cL_b \subseteq \cL_b$ for all $a\in\cA$ and $b\in\cB$. The concept is visualized in Figure \ref{Fig:ScoPESconcept} in the Appendix.
Our Theorem \ref{thm:MainSCoPES} and its corollaries generalize coverage probability excursion (CoPE) sets \cite{Sommerfeld:2018CoPE,Bowring:2019,Bowring:2021} and yield a more general version of the view on simultaneous confidence bands that we introduced in \cite{Ren:2022}.
In particular, we resolve some shortcomings in \cite{Sommerfeld:2018CoPE} which
led to unnecessary assumptions on $\mu$.

Our second main contribution is to clarify the connection between confidence regions for
excursion sets and multiple hypotheses tests that control the familywise error rate (FWER)
in the strong sense. Especially, we give a useful duality in Proposition \ref{prop:duality}.
From this perspective we develop another interpretation of Theorem \ref{thm:MainSCoPES} as
providing the \textit{oracle limit distribution} of a class of multiple testing strategies
that asymptotically control the FWER in the strong sense, compare Section \ref{sec:duality}.
Furthermore, we show in Theorem \ref{thm:Dette_Generalized} and \ref{thm:Equiv_2sides}
that many recently proposed asymptotic global relevance and equivalence tests for functional
data based on the supremums norm, among others,
\cite{Dette2021FunctionOnFunction,Dette:2022cov,Dette:2021bio}, can be derived
and generalized from the viewpoint of our Theorem \ref{thm:MainSCoPES}.
Even better, our established duality allows us to construct from Theorem \ref{thm:MainSCoPES}
local relevance and local equivalence tests that control the FWER
in the strong sense, compare Theorem \ref{thm:RelTubeTest} and \ref{thm:lequivTest}.
To the best of our knowledge the latter have not been discussed in the literature yet.

A third contribution of our article is discussing a slight shift in the interpretation of statistical
inference on excursion sets from the viewpoint of our duality between confidence sets for excursion
sets and multiple hypothesis testing which we exemplify in
a simple multiple linear regression example in Section \ref{scn:Scheffe}.

Finally, we want to point out that we aware of the fact that the assumption that
an estimator $\hat\mu$ satisfies a ULT might be too strong. We introduce this assumption mainly
to compare to the current literature and derive explicit limit distributions.
The core of our results, however, is the SCoRE set Metatheorem, Theorem \ref{thm:SCoPESMetatheorem},
which requires considerably weaker assumptions.

\paragraph{Connections to the Literature.}
The first work we are aware of which quantifies the
uncertainty of the lower and the upper excursion set above zero
of a function $\mu$ defined on $\mathbb{R}^D$ from a statistical viewpoint is \cite{Mammen:2013}.
Using an estimator $\hat\mu_N$ of $\mu$ they construct from data a lower and an upper
excursion set $\hat{\mathcal{L}}$ and $\hat{\mathcal{U}}$ above zero
respectively and prove in their Lemma 2.1. nonasymptotic bounds such that both
inclusions $ \hat{\mathcal{L}} \subseteq \mathcal{L}_0$
and $ \hat{\mathcal{U}} \subseteq \mathcal{U}_0$ hold true. This can be viewed
as a special case of our Proposition \ref{prop:NonAsymMetaTheorem}
from Appendix \ref{scn:MetaTheorem}.
In their Theorem 3.1 they apply these sets to quantify the uncertainty of the excursion sets
of a kernel density estimator above a single $c\in\mathbb{R}$ asymptotically.
They show that under certain assumptions on $\mu$, for example, a non-degeneracy condition
of the gradient of $\mu$ the above inclusion is asymptotically valid, if the parameter for constructing
$\hat{\mathcal{L}}$ and $\hat{\mathcal{U}}$ is estimated using the bootstrap.
Noteworthy this result does not explicitly extract the asymptotic distribution.
Another more recent work which applies similar ideas to kernel density estimators is \cite{Qiao:2019}.
In particular, they derive assumptions and rates for having asymptotically
nominal coverage and give a broad
overview on applications of level and excursion set estimation.

The first article that explicitly derives limit distributions for confidence regions of
excursion sets for general estimators $\hat\mu$ of $\mu$ satisfying a central limit theorem
in $C(S)$ is \cite{Sommerfeld:2018CoPE}. This paper has some shortcomings,
for example, $\mu$ is not allowed to be tangential to the level $c$,
\cite[Assumption 2.1.(a) and Lemma 1]{Sommerfeld:2018CoPE} which is a similar condition as the
Assumption (A.ii) required in Theorem 3.1. of \cite{Mammen:2013}.
However, in \cite{Sommerfeld:2018CoPE} this condition is only needed because of an imprecise definition,
which also prevents that their theory can be connected easily to multiple hypothesis testing controlling
the FWER.
We explain this in more detail in Section \ref{scn:selectionSCoPES} and Appendix
\ref{scn:ChoiceInclusion}. Note that this definition persists in the applications of their work
to geoscience \cite{French:2017assessing} and neuroimaging \cite{Bowring:2019,Bowring:2021}
and has also been used in the innovative work \cite{Maullin:2022} which generalizes \cite{Sommerfeld:2018CoPE} to
intersections and unions of excursion sets of several functions
$\mu^1,\ldots, \mu^K\in a(s)$, $K\in\mathbb{N}$, above a single $c\in\mathbb{R}$.
Our Corollary \ref{cor:MaxImproved} generalizes the main theorem from \cite{Sommerfeld:2018CoPE}
as it removes their Assumption 2.1.(a), allows for non-constant thresholds $c\in\ell^\infty(S)$ and
can be interpreted in terms of a multiple hypothesis test.

To date only \cite{Ren:2022} is dealing with several
excursion sets at the same time.
Their main theorem shows that properly thresholding SCBs yields
simultaneous confidence regions for all lower and upper excursion sets over $c\in\mathbb{R}$. 
We generalize their main result in our Proposition \ref{prop:CoPEasSCB} and embed it into
the testing literature.
Moreover, our Corollary \ref{cor:SCB_CoPE} can be viewed as its asymptotic generalization as it
connects asymptotic $(1-\alpha)$-SCBs (among others, 
\cite{Degras:2011SCB, Telschow:2022SCB}) and simultaneous confidence regions for excursion sets.
In principle even the fast and fair SCBs \cite{Liebl:2019fast} can be fitted into our framework.
Here we need to fall back to the SCoRE set Metatheorem, since their key innovation is that the
quantile parameter $q$ is a function, which enables them to adapt the width of the SCBs not only
to the variance, yet also to the local correlation. The benefit of this is that
invalidation of the coverage can be spread fairly over a partition of $S = [0,1]$.
We do not include this result here, since we restrict ourselves to constant $q$ for the sake of simplicity. 

Last but not least there is a less obvious connection to the recently proposed
relevance tests \cite{Dette:2020functional, Dette2021FunctionOnFunction, Dette:2022cov}
and equivalence tests \cite{Dette:2021bio, Dette2018:equivalenceReg}
in the space of continuous functions over $S = [0,1]$.
These articles use the test statistic $\Vert \hat\mu_N \Vert_\infty = \sup_{s\in S} \vert \hat\mu_N(s) \vert$
and derive using a CLT of $\hat\mu$ in $C(S)$ its limiting distributions
under the null and alternative hypotheses which depend on the set of extreme points of $\mu$.
We explain in Section \ref{scn:testing} how these test procedures can be derived from
Corollary \ref{cor:Extraction} and clarify that they only control the FWER in the
weak sense at level $\alpha$.
In our Theorems \ref{thm:RelTubeTest} and \ref{thm:lequivTest} we improve these testing strategies
by modifying them to control the FWER in the strong sense at level $\alpha$.

\paragraph{Organization of the Article.}
In Section \ref{scn:notations} we introduce notations and definitions required
to understand our main results.
In Section \ref{scn:SimCoPE} we explain our main theorem,
its corollaries and the required assumptions. It also contains a general strategy
to consistently estimate generalized preimages --a concept that appears in our main theorem--
and outlines a general strategy to estimate the required quantile parameter.
The connection between SCoRE sets and statistical hypothesis testing are explained
in Section \ref{scn:testing}. In particular, we state our duality to certain multiple hypothesis
tests in Section \ref{sec:duality} and the oracle limit distribution in Section \ref{sec:oracle}.
In Section \ref{scn:Scheffe} we explain the interpretation of statistical inference based on
SCoRE sets using a simple example from linear regression. The article finishes with
a discussion, Section \ref{scn:discussion}, of some consequences of our results and ideas for future work.

\section{Notations and Definitions}\label{scn:notations}
In this article $ ( S, d ) $ denotes a metric space.
$S \setminus B$, ${\rm cl}B$ for the topological closure,
${\rm int}B$ for the interior and $\partial B = {\rm cl}B\setminus{\rm int}B$
for the topological boundary of $B$.
The set $ \mathcal{F}( S ) $ denotes the set of functions
$ f: S  \rightarrow \mathbb{R}\cup \{ \pm \infty \} $.
The set $\ell^\infty(S) \subset \mathcal{F}( S )$ is the set of all bounded functions
$ f \in \mathcal{F}( S )$, i.e., $\Vert f \Vert_\infty = \sup_{s\in S} \vert f(s) \vert < \infty$
and $ C( S ) \subseteq \mathcal{F}( S ) $ is the subset of continuous functions
with respect to the topology generated by the metric $d$.
If $ f \in \mathcal{F}(S) $ and $ r \in \mathbb{R} \cup \{ \pm \infty \} $, we write $ f \equiv r $,
if $ f $ is the constant function with value $ r $ and if no confusion is possible we identify
$r$ with the constant function with value $r$.
For any $f \in \mathcal{F}(S)$ we define, as usual,
\begin{equation}\label{eq:supinf_conv}
	\sup_{s \in \emptyset } f(s) = -\infty ~~\text{ and }~~
	\inf_{s \in \emptyset } f(s) = \infty\,.
\end{equation}


Let $( \Omega, \mathfrak{P},\Prb) $ be a probability space.
Our results are based on the J. Hoffmann-J{\o}rgensen theory of weak convergence
as elaborated in the first chapters of \citet{Vaart:1996weak}.
Recall that the \textit{inner probability} of a set $V\subset \Omega$ is given by
$\mathbb{P}_*( V ) = \sup\{\, \mathbb{P}(W) ~\vert~ W \subseteq V\,, W \in \mathfrak{P} \,\}$,
and its \textit{outer probability} by
 $\mathbb{P}^*( V ) = \inf\{\, \mathbb{P}(W) ~\vert~ W \supseteq V\,, W \in \mathfrak{P} \,\}$.
We use repeatedly the statements $(i)$ and $(ii)$ of the Portmanteau Theorem
\citep[Theorem 1.3.4]{Vaart:1996weak}. Hence we introduce the following shorter notation
to simplify bounds on the the limes inferior ($\liminf$) of inner probabilities and
limes superior ($\limsup$) of outer probabilities.
\begin{definition}
	Let $\Omega_N \subseteq \Omega$ be a sequence of sets, $X:\Omega\rightarrow \mathbb{R}$
	a random variable and $q \in \mathbb{R}$. We write
	\begin{equation*}
		\lim_{N\rightarrow\infty} \mathbb{P}_*\big[ \Omega_N \big] = \mathbb{P}\big[ X \prec q \big]\,,
	\end{equation*}
	under Assumptions {\rm\textbf{(A)}} / {\rm\textbf{(B)}} if the following two statements hold:
	\begin{equation*}
	\begin{split}
	&\text{Under Assumptions {\rm\textbf{(A)}}}:~ ~ ~
		\liminf_{N \rightarrow \infty}
			\mathbb{P}_*\big[ \Omega_N \big]
		 	\geq \mathbb{P}\big[ X < q \big]\\
	&\text{Under Assumptions {\rm\textbf{(B)}}}:~ ~ ~
	\limsup_{N \rightarrow \infty}
		\mathbb{P}^*\big[ \Omega_N \big]
		 \leq \mathbb{P}\big[ X \leq q \big].
	\end{split}
	\end{equation*}
\end{definition}

To compactly state our main result, Theorem \ref{thm:MainSCoPES}, we require generalized notions of
preimages and graphs of functions. We introduce these concepts hereafter. A visualization of these
concepts can be found in Figure \ref{fig:fig0}.
Recall that the graph of $ f \in \mathcal{F}(S) $ is the set
$
	\Gamma(f) = \big\{(s,r) \in S \times \mathbb{R}~\vert~ r = f(s) \big\}\,.
$
\begin{definition}[\textbf{Graph of a Set of Functions}]\label{def:graph}
	For any  $ \cH \subseteq \mathcal{F}(S) $ define the \textit{graph of $\cH$} to be the union of
	the graphs of its elements, i.e.,
	\begin{equation*}
		\Gamma(\cH) = \bigcup_{ h \in \cH} \Gamma(h)\,.
	\end{equation*}
	As a further convention we will write $h \in \Gamma(\cH)$ if $ \Gamma(h) \subseteq \Gamma(\cH) $
	for $h \in \mathcal{F}(S)$.
\end{definition}

\begin{definition}[\textbf{Preimage of a Set of Functions}]\label{def:Cpreimage}
	For any $\cH \subseteq \mathcal{F}(S) $ and $ g \in \mathcal{F}(S) $
	define the \textit{preimage of $\cH$ under $g$} by
$$
	g^{-1}_{\cH}
			= \big\{\, s \in S ~\vert~ \exists h \in \cH:~ g(s) = h(s) \,\big\}\,.
$$
	If $ \cH = \{ h \} $, $ h \in \mathcal{F}(S) $, the abbreviation
	$ g^{-1}_{\{ h \}} = g^{-1}_h $ is used.
\end{definition}
If $\cH \not\subset \cC(S)$ or $ g \notin \cC(S) $ we need a
generalized concept of a preimage of a set of functions in the sense that we 
add all ``touching points" of $\Gamma(g)$ and $\Gamma(\cH)$ to the preimage.
Making this idea mathematically precise requires the thickening
of a set $\cH\subseteq\cF(S)$.
\begin{definition}[\textbf{Thickenings of a Set of Functions}]\label{def:thickening}
	For $ \cH \subseteq \cF(S) $ and $\eta > 0$ the set
	$
	\cH_{\pm \eta} = \big\{\, h \pm \varepsilon \in \cF(S)~\vert~
								h \in \cH,~ 0 \leq \varepsilon \leq \eta
						\,\big\}
	$ is the $\pm\eta$-thickening of $\cH$, while $
	\cH_{\eta} = \cH_{-\eta} \cup \cH_{+\eta}$ is called
	the	$\eta$-thickening of $\cH$.
\end{definition}

\begin{definition}[\textbf{Generalized Preimage of a Set of Functions}]\label{def:genPreimage}
	For $\cH \subseteq \mathcal{F}(S)$ and $ g \in \mathcal{F}(S) $ we call the set
	\begin{equation*}
		\mathfrak{g}^{\pm}_\cH
		= \bigcap_{\eta > 0} {\rm cl}\,g^{-1}_{\cH_{\pm\eta}}
	\end{equation*}
	the \textit{upper(+)/lower(-) generalized preimage of $\cH$ under $g$}.
\end{definition}
\begin{remark}\label{rmk:SetConvergence}
	The terminology generalized preimage is reasonable as
	$ g^{-1}_\cH \subseteq \mathfrak{g}^{\pm}_\cH \subseteq \mathfrak{g}^{-1}_\cH$.
	The main reason why the generalized preimage appears later on is that for $S$ being compact and
	$\mathfrak{g}^{\pm}_\cH \neq \emptyset$, it holds that
	\begin{equation}\label{eq:HausdorffConvergence}
		\lim_{N\rightarrow\infty}
		d_H\Big( {\rm cl}\,g^{-1}_{\cH_{\pm\eta_N}}, \mathfrak{g}^{\pm}_\cH \Big) = 0\,,
	\end{equation}
	for any positive sequence
	$ (\eta_N)_{N\in\mathbb{N}} \subset\mathbb{R} $
	converging to zero as $ g^{-1}_{\cH_{\pm\eta}} \subseteq g^{-1}_{\cH_{\pm\eta'}} $
	for $ \eta' \geq \eta $, see \cite[Corollary 5.30]{Tuzhilin:2020}. Moreover, $\mathfrak{g}^{\pm}_\cH$
	is the unique set with this property since it is closed. If there would be another $A \subseteq S$
	satisfying \eqref{eq:HausdorffConvergence}, it holds by the triangle inequality that
	\begin{equation*}
		d_H\left( \mathfrak{g}^{\pm}_\cH, A \right)
			\leq \lim_{N\rightarrow 0} d_H\left( {\rm cl}\,g^{-1}_{\cH_{\pm\eta_N}} , \mathfrak{g}^{\pm}_\cH \right) +
				d_H\left( {\rm cl}\,g^{-1}_{\cH_{\pm\eta_N}} , A \right)
			= 0
	\end{equation*}
	and therefore $\mathfrak{g}^{\pm}_\cH = {\rm cl}A$, compare \cite[Problem 5.1.(3)]{Tuzhilin:2020}.
\end{remark}

\begin{figure}[h]\centering
			\includegraphics[width=0.49\textwidth]{\figurepath 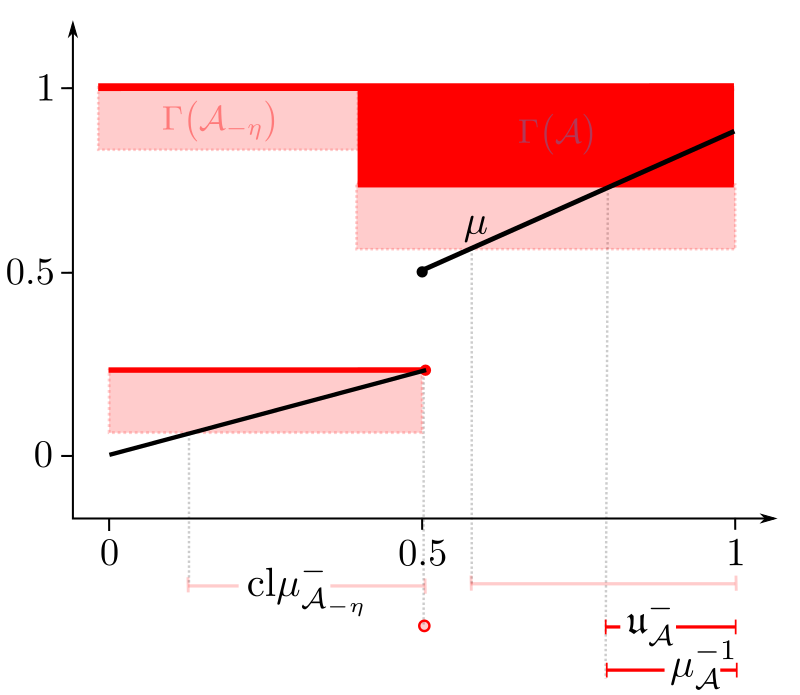}
		\caption{ Visualization of Definitions \ref{def:graph}-\ref{def:genPreimage}.
				  Black: the target function $\mu$.
				  Red: the graph $\Gamma(\cA)$ of a set of functions $\cA$.
				  Light red: the graph $\Gamma(\cA_{-\eta})$ of lower $\eta$-thickening $\cA_{-\eta}$ of $\cA$.
				  Moreover, the preimages and lower generalized preimages of $\cA$ under $\mu$ are shown below the
				  $x$-axis.
			    }\label{fig:fig0}
\end{figure}
\section{An Asymptotic SCoRE Set Theorem}\label{scn:SimCoPE}

In this section we state and discuss our main theorems. The main assumption will be that
the estimators $\hat\mu_N:~ \Omega \rightarrow \ell^\infty(S) $ of $\mu\in \ell^\infty(S)$
satisfy a uniform limit theorem (ULT) in $\ell^\infty( U )$
for some appropriately chosen $U\subseteq S$. Although we call $\hat\mu_N$ an estimator
we do not in general assume that it is measurable.
Our main objects of interest are excursion sets:
\begin{definition}
	The \textit{lower and upper excursion sets of $\mu$ over $f\in \mathcal{F}( S )$} are
	\begin{equation*}
	\begin{split}
		&\cL_f = \big\{ s\in  S ~\vert~ \mu(s) < f(s) \big\}\,,~ ~ ~ ~ 
		\cU_f  = \big\{ s\in  S ~\vert~ \mu(s) > f(s) \big\}\,.
	\end{split}
	\end{equation*}
	If $\mu$ is replaced by $\hat\mu_N$, we call the originating set-valued functions
	$\hat{\cL}_f$ and $\hat{\cU}_f$ the lower and upper excursion sets of $\hat\mu_N$
	over $f$.	
\end{definition}

The notation used in our main theorem and its corollaries can be simplified, if we define
\begin{equation}\label{eq:LimitForm}
\begin{split}
	T_{\cA, \cB}( f )
	&= \max\Bigg\{ \sup_{s \in \mu^{-1}_\cA } f(s)
				,~ \sup_{s \in \mu^{-1}_\cB} -f(s) \Bigg\}\\
	\mathfrak{T}_{\cA, \cB}( f )
	&= \max\Bigg\{ \sup_{s \in \mathfrak{u}^{-}_\cA} f(s),
				~ \sup_{s \in \mathfrak{u}^{+}_\cB} -f(s) \Bigg\}
\end{split}
\end{equation}
for $\cA, \cB \subseteq \mathcal{F}(S)$ and $\mu,f\in\ell^\infty(S)$.
Here $\mathfrak{u}^{\pm}_\cH$ is the upper/lower generalized preimage of $\cH\subseteq\mathcal{F}(S)$ under $\mu$.
We abbreviate $T_{\cH, \cH}( f ) = T_{\cH}( f )$ and
 $\mathfrak{T}_{\cH, \cH}( f ) = \mathfrak{T}_{\cH}( f )$.

\subsection{Assumptions}
The next definition specifies what kind of uniform limit theorem our main result requires.
\begin{definition}\label{def:SuffWeakConv}
	Let $ \sigma \in \ell^\infty(S)$ be such that $0 < \mathfrak{o} < \sigma(s) < \mathfrak{O} < \infty$
	for all $s\in S$ and $(\tau_N)_{N\in\mathbb{N}} \subset \mathbb{R}$ a positive sequence converging to zero.
	We denote with $(G_N)_{N \in \mathbb{N}}$ the sequence
	\begin{equation}\label{eq:FiniteG}
		G_N = \frac{\hat\mu_N - \mu}{\tau_N\sigma}\,.
	\end{equation}
	Let $ \cA, \cB \subseteq \mathcal{F}(S) $, $\tilde\eta>0$ and $\cA_{-\tilde\eta}$ and
	$\cB_{+\tilde\eta}$ be the one-sided thickenings of $\cA$ and $\cB$
	respectively as defined in Definition \ref{def:thickening}.
	We say that an estimator $ \hat\mu_N$ of $\mu$ with values in $\ell^\infty(S)$ fulfills a
	\textit{uniform limit theorem on $\cA_{-\tilde\eta}\cup \cB_{+\tilde\eta}$}
	or short a $\cA_{-\tilde\eta}$-$\cB_{+\tilde\eta}$-ULT, if the following conditions hold:
	\begin{enumerate}
		\item[(i)] There exist a tight, Borel measurable
			  $G:\Omega\rightarrow\ell^\infty\big( {\rm cl}\,\mu^{-1}_{\cA_{-\tilde{\eta}}} \cup {\rm cl}\,\mu^{-1}_{\cB_{+\tilde{\eta}}} \big)$
			  such that
			  $
				G_N \rightsquigarrow G
			  $
			  weakly in
			  $\ell^\infty\big( {\rm cl}\,\mu^{-1}_{\cA_{-\tilde{\eta}}} \cup {\rm cl}\,\mu^{-1}_{\cB_{+\tilde{\eta}}} \big)$ in
			  the sense of \citep[Definition 1.3.3]{Vaart:1996weak}.
			  Here we silently identified $G_N$ and its restriction to
			  ${\rm cl}\,\mu^{-1}_{\cA_{-\tilde{\eta}}} \cup {\rm cl}\,\mu^{-1}_{\cB_{+\tilde{\eta}}}$.
		\item[(ii)] There is a constant $ K > 0 $ and a sequence $ (Z_N)_{N\in\cN} $
					of functions $\Omega\rightarrow \ell^\infty(S)$ with
					$ \inf_{s \in S } \tau_N^{-1}Z_N(s) $ being asymptotically
					tight\footnote{In the sense of  \citep[p.21]{Vaart:1996weak}}
					such that $\Prb_*$ almost surely for all $ a \in \cA $ and all $ b \in \cB $
					it holds that
			    	\begin{equation}\label{eq:ass_farzone}
						\big(\, \hat\mu_N(s) - a(s) \,\big)
			    		\cdot{\rm sgn}\big(\, \mu(s) - a(s) \,\big)
						\geq \sigma(s) \big(\, K  + Z_N(s) \,\big)
					\end{equation}
    				for all $ s \in S\, \setminus {\rm cl}\,\mu^{-1}_{\cA_{-\tilde{\eta}}} $
					and
					\begin{equation*}
						\big(\, \hat\mu_N(s) - a(s) \,\big)
			    		\cdot{\rm sgn}\big(\, \mu(s) - a(s) \,\big)
						\geq \sigma(s) \big(\, K  + Z_N(s) \,\big)
					\end{equation*}
    				for all $ s \in S\, \setminus {\rm cl}\,\mu^{-1}_{\cB_{+\tilde{\eta}}} $.
	\end{enumerate}		
\end{definition}
\begin{remark}\label{rmk:SignInequal}
	Condition $(ii)$ is an adaptation of Assumption 2.1.(c) from \cite{Sommerfeld:2018CoPE}
	and weakens the assumption of having a ULT as in $(i)$ on all of $S$
	Essentially it assumes that the sequence of estimators $\hat\mu_N$
	have the correct sign for all $s \in S$ sufficiently far away from the
	generalized preimage $\mathfrak{u}^{-1}_\cA$ and $\mathfrak{u}^{-1}_\cB$.
	This follows for $\mathfrak{u}^{-1}_\cA$ from 
	\begin{equation*}
	\begin{split}
		\limsup_{N\rightarrow\infty}& \,\mathbb{P}^*\Big[\,
							\exists s \in S \setminus \mu^{-1}_{\cA_{\tilde{\eta}}}:~
							{\rm sgn} \big(\, \mu(s) - a(s) \,\big)
							= - {\rm sgn} \big(\, \hat\mu_N(s) - a(s) \,\big)
					\,\Big]\\
		 &\leq\limsup_{N\rightarrow\infty}\mathbb{P}_*\Big[\,
		 					\exists s \in S \setminus \mu^{-1}_{\cA_{\tilde{\eta}}}:~
		 					- \big\vert\, \hat\mu_N(s) - a(s) \,\big\vert
		 						\geq \sigma(s)\big( K + Z_N(s) \big)
		 				\,\Big]\\
		 &\leq \limsup_{N\rightarrow\infty}\mathbb{P}^*\Big[\,  \inf_{s \in S} Z_N(s) \leq -K \,\Big]\\
		 &\leq 1 - \liminf_{N\rightarrow\infty}\mathbb{P}_*\Big[\, \inf_{s \in S} Z_N(s) \in [-K, K] \,\Big]
		 = 0\,,
	\end{split}
	\end{equation*}
	where we used \eqref{eq:ass_farzone} in the first inequality and $ \inf_{s \in S }\tau_N^{-1} Z_N(s)$
	being asymptotically tight in the last equality.
\end{remark}
Let $\cA,\cB \in \cF(S)$ and $\tilde\eta> 0$. The following assumptions are used in our main result:
\begin{itemize}[leftmargin=1.4cm]
	\item[\textbf{(A1)}] The estimator $\hat\mu_N$ of $\mu$ satisfies an
						 $\cA_{-\tilde\eta}$-$\cB_{+\tilde\eta}$-ULT.
	\item[\textbf{(A2)}] $G$ and $G_N$ restricted to
						 $\big( {\rm cl}\,\mu^{-1}_{\cA_{-\tilde{\eta}}} \setminus {\rm int}\,\mathfrak{u}^{-}_\cA
						 \big) \cup \big( {\rm cl}\,\mu^{-1}_{\cB_{+\tilde{\eta}}} \setminus {\rm int}\,\mathfrak{u}^{+}_\cB \big)$
						 have almost surely
						 continuous sample paths.
	\item[\textbf{(A3)}] ${\rm cl}\,\mu^{-1}_{\cA_{-\tilde{\eta}}}$ and ${\rm cl}\,\mu^{-1}_{\cB_{+\tilde{\eta}}}$
						 are compact.
	\item[\textbf{(A4)}] ${\rm cl}\, \mu^{-1}_\cA = \mathfrak{u}^{-}_\cA$ and
						 ${\rm cl}\, \mu^{-1}_\cB = \mathfrak{u}^{+}_\cB$.
%
\end{itemize}
\begin{remark}
	Assumptions \textbf{(A1)}-\textbf{(A3)} are required to prove the lower bounds on the confidence statements
	in our main theorem.
	They ensure that for $\cA, \cB \subseteq \mathcal{F}(S)$ and
	$(\eta_N)_{N \in \mathbb{N}}$ converging to zero, that
	\begin{equation*}
		T_{ {\rm cl}\,\cA_{\eta_N}, {\rm cl}\,\cB_{\eta_N} }( G_N )
	 		\rightsquigarrow \mathfrak{T}_{ \cA, \cB }( G ) \text{ weakly in }\mathbb{R}\,,
	 \end{equation*}
	 by Lemma \ref{lem:Tweak} in Appendix \ref{App:WeakMaxConvergence}.
\end{remark}
\begin{remark}\label{rmk:A3}
	The compactness assumption in \textbf{(A3)} can be relaxed. The requirement in the proof is that
	for any sequence $(\eta_N)_{N\in\mathbb{N}}$ converging to zero we have that
	${\rm cl}\,\mu^{-1}_{\cA_{-\eta_N}}$ and
	${\rm cl}\,\mu^{-1}_{\cB_{+\eta_N}}$ are converging in Hausdorff distance to $\mathfrak{u}^{-}_\cA$
	and $\mathfrak{u}^{+}_\cB$ respectively,
	which is implied by \textbf{(A3)} as discussed in Remark \ref{rmk:SetConvergence}.
\end{remark}
\begin{remark}
	Assumption \textbf{(A4)} yields that the lower and upper bound of the confidence statement
	in our main theorem	are derived from the same limiting process.
	In particular, \textbf{(A4)} is trivially satisfied if $\cA,\cB\subseteq C(S)$ and $\mu\in C(S)$,
	compare Appendix \ref{App:frakSeqS}.
	However, if \textbf{(A4)} is not satisfied, there exists a slightly different limiting process
	which yields an upper bound similar to the upper bound in Theorem \ref{thm:MainSCoPES},
	compare Remark \ref{rmk:WeakerUpperBound}.
\end{remark}


\begin{figure}[h]
			\includegraphics[width=0.49\textwidth]{\figurepath 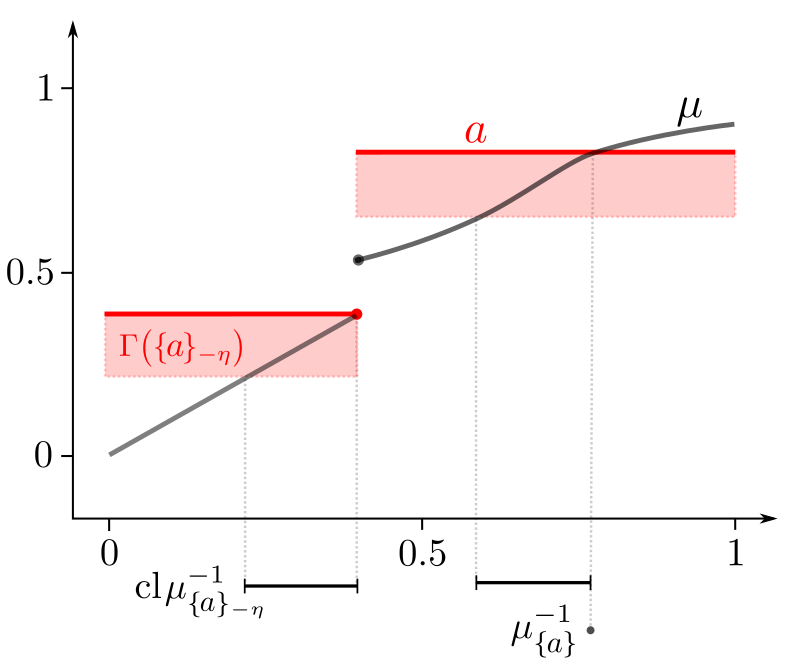}
			\includegraphics[width=0.49\textwidth]{\figurepath 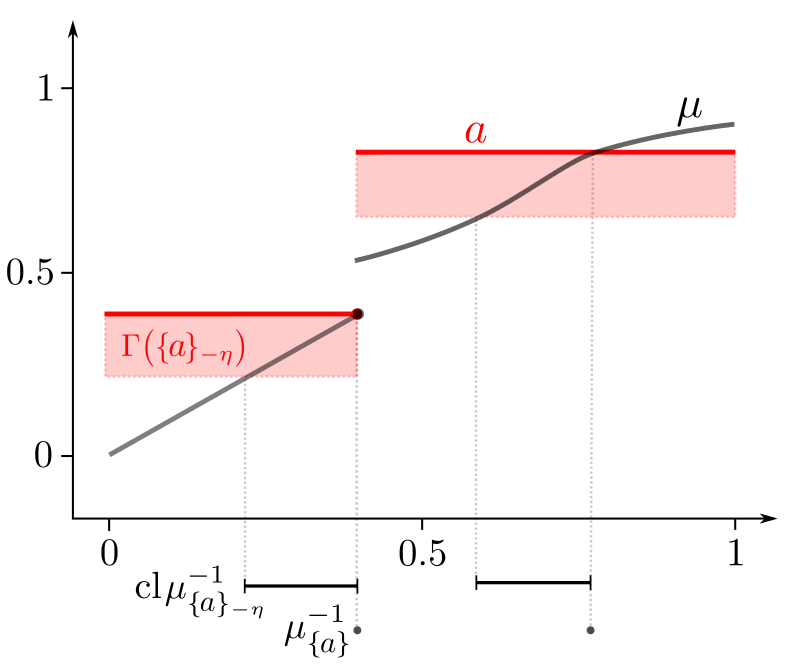}
		\caption{ Illustration that $\mu^{-1}_{\{a\}}$ is not necessarily the
				 Hausdorff-limit of $\mu^{-1}_{ \{a\}_{-\eta} }$.
			     The problem are "touching points" of $\Gamma(\mu)$ and
				 $\Gamma(a)$ such as $s_0\approx 0.4$, if $\mu$ or $a$ are discontinuous.
				\textit{Left:} 
					As $\mu(s_0) \neq a(s_0)$ it holds
			    	$s_0\notin \mu^{-1}_{\{c\}}$. However,
			    	for each $\eta >0$ there is an $s_\eta \in \mu^{-1}_{{\{a\}}_\eta}$
			    	arbitrary close to $s_0$. Thus, the Hausdorff
			    	convergence $\mu^{-1}_{{\{a\}}_{-\eta}}$ to $\mu^{-1}_{{\{a\}}}$ fails.
			    \textit{Right:}	    
			    	As $b(s_0) = \mu(s_0)$ it holds that $\mu^{-1}_{{\{a\}}_{-\eta}}$ converges in Hausdorff distance
			    	to $\mu^{-1}_{\{a\}}$ as $\eta\rightarrow 0$.
			    }\label{fig:ContMuProblem}
\end{figure}


\subsection{An Asymptotic SCoRE Set Theorem}\label{scn:selectionSCoPES}
The next result is our main theorem. It can be viewed as a Corollary of the
SCoRE set Metatheorem for random variables satisfying a ULT, see Appendix \ref{scn:MetaTheorem}.
\begin{theorem}\label{thm:MainSCoPES}
	Let $ \cA, \cB \subseteq \mathcal{F}(S) $ and $q\in\mathbb{R}$. Assume {\rm\textbf{(A1)}}.
	Then
\begin{equation*}
	\lim_{N \rightarrow \infty}
		\mathbb{P}_* \Big[ \forall a \in \cA\,\forall b\in \cB:
									~ \hat{\cU}_{a + q\tau_N\sigma} \subseteq \cU_{a}
											~\wedge~
									  \hat{\cL}_{b - q\tau_N\sigma} \subseteq \cL_{b}
							   \,\Big]
		 = \mathbb{P}\big[\, \mathfrak{T}_{\cA, \cB}( G ) \prec q \,\big]
\end{equation*}
under {\rm\textbf{(A2)}},{\rm\textbf{(A3)}} / {\rm\textbf{(A4)}}.
\end{theorem}
\begin{remark}
	The above theorem is valid even if $\mathfrak{u}^{-}_{\cA} \cup \mathfrak{u}^{+}_{\cB} = \emptyset$.
	In this case the inner probability that the inclusions on the l.h.s. hold true simultaneously
	converges to $1$ for all $q\in \mathbb{R}$,
	which is consistent with our notation as
	\begin{equation*}
		\mathbb{P}\big[\,  \mathfrak{T}_{\emptyset, \emptyset}( G ) < q \,\big]
		= \mathbb{P}\left[  -\infty < q \,\right] = 1\,.
	\end{equation*}
\end{remark}
%
\begin{remark}
	Since $\mathfrak{u}^{-1}_{\cC} = \mathfrak{u}^{-1}_{\cC'}$
	for $\cC' = \big\{ c \in \mathcal{F}(S)~\vert~ c \in \Gamma(\cC) \big\}$ we could replace
	$\cA,\cB$ by $\cA', \cB'$ in the above theorem without changing the r.h.s..
	Neither does the r.h.s. change if we  add any of the inclusions
	$S \setminus \hat{\cL}_{a + q\tau_N\sigma} \subseteq S \setminus \cL_{a}$, $a\in\cA$, or
	$S \setminus \hat{\cU}_{b - q\tau_N\sigma} \subseteq S \setminus \cU_{b}$, $b\in\cB$, to the probability
	statement on the l.h.s., compare Lemma \ref{lem:SCoPESlemma}.
\end{remark}
\begin{remark}
	Assumption \textbf{(A4)} cannot be weakened to include any
	$s_0 \in \mathfrak{u}^{-1}_{\cA} \setminus {\rm cl}\,\mu^{-1}_{\cA}$ or $s_0 \in 
	\mathfrak{u}^{-1}_{\cB} \setminus {\rm cl}\,\mu^{-1}_{\cB}$ without getting
	a weaker upper bound.
	The reason is that the sharp upper bound requires that, for some $a \in \cA$ or $b \in \cB$,
	$\mu(s_0) \notin \big[ \hat\mu_N - q\tau_N\sigma(s_0), \infty \big)$
	for an $s_0\in \mathfrak{u}^{-1}_{\cA}$
	implies that
	$
	\hat{\cU}_{a + q\tau_N\sigma} \not\subseteq \cU_{a} 
	$,
	or $\mu(s_0) \notin \big( -\infty, \hat\mu_N - q\tau_N\sigma(s_0) \big]$
	for an $s_0\in \mathfrak{u}^{-1}_{\cB}$
	implies that
	$\hat{\cL}_{b - q\tau_N\sigma} \not\subseteq \cL_{b}
	$. This, however, cannot be guaranteed
	at points of discontinuity of $\mu$, $a$ or $b$, even if $G_N$ has continuous sample paths,
	compare Fig. \ref{fig:A4}.
\end{remark}
\begin{remark}\label{rmk:WeakerUpperBound}
	If \textbf{(A4)} is not satisfied, then there is always the following weaker upper bound
	in Theorem \ref{thm:MainSCoPES} on the limes superior:
	$
		\Prb\big[ T_{\cA,\cB}(G) \leq q \big]
	$.
	This upper bound can be easily derived from the SCoRE set Metatheorem, compare Remark
	\ref{eq:WeakendUpperBoundSCoREmeta} in Appendix \ref{scn:MetaTheorem}.
\end{remark}

\begin{figure}[h]
			\includegraphics[width=0.49\textwidth]{\figurepath 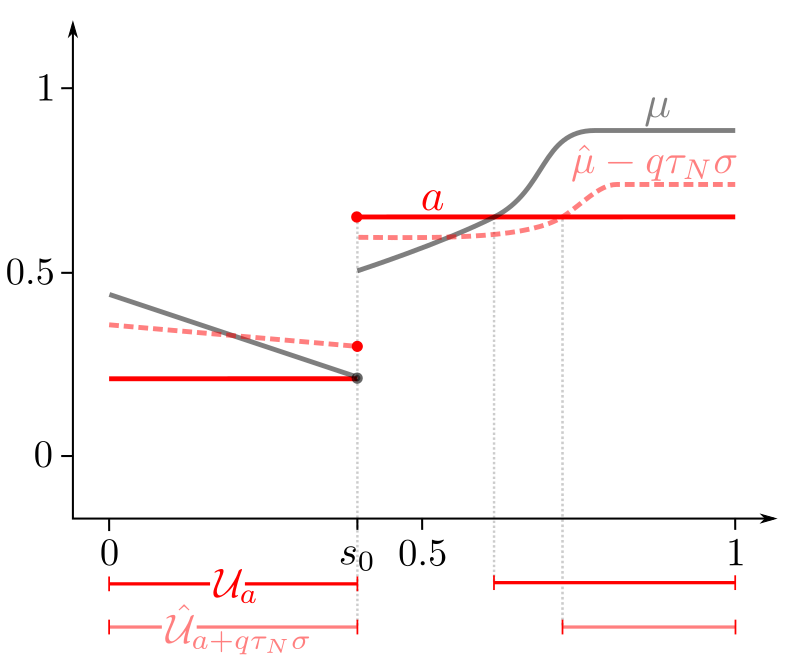}
			\includegraphics[width=0.49\textwidth]{\figurepath 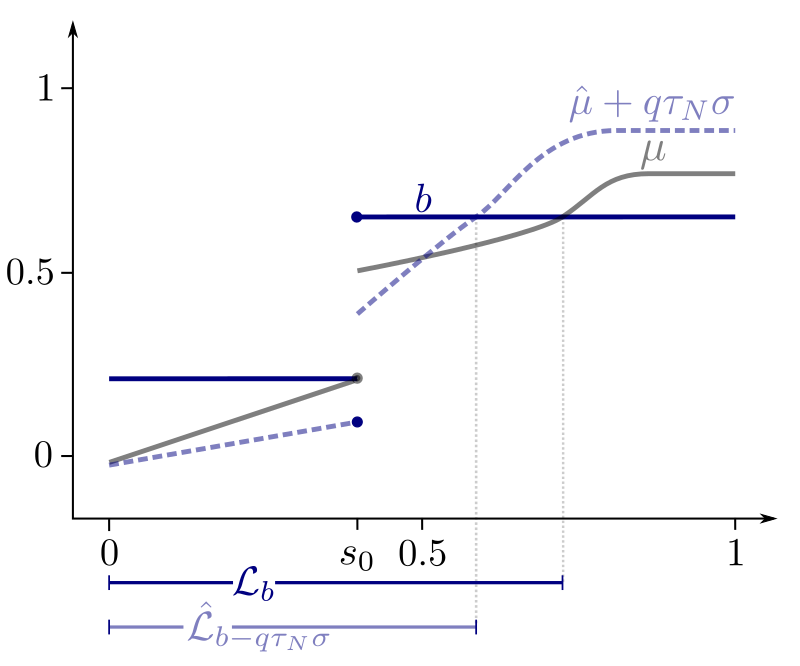}
		\caption{Illustration of why Assumption \textbf{(A4)} cannot be weakened to
				 include points
				 $ s_0 \in \mathfrak{u}^{-1}_{c} \setminus {\rm cl}\,\mu^{-1}_{c}$ for $c\in\{a,b\}$
				 even if $G_N$ is continuous in a neighbourhood of
				 $\mathfrak{u}^{-1}_{c} \setminus {\rm cl}\,\mu^{-1}_{c}$
				 in both panels.
				 The reason is that the SCoRE sets inclusions in both panels are satisfied although
				 $\hat\mu_N(s_0) - q\tau_N\sigma(s_0)>\mu(s_0)$ in the right panel and
				 $\hat\mu_N(s_0) + q\tau_N\sigma(s_0)<\mu(s_0)$ in the left panel.
		\label{fig:A4}		
		}
\end{figure}

Setting $\cA = \cB = \mathcal{F}(S)$ in Theorem \ref{thm:MainSCoPES} implies
$
	S = \mathfrak{u}^{\pm}_\cA
	  = \mu^{-1}_\cA = {\rm cl}\,\mu^{-1}_\cA
	  = {\rm int}\,\mu^{-1}_\cA
$
and therefore \textbf{(A2)}-\textbf{(A4)} are satisfied even if $S$ is non-compact by Remark \ref{rmk:A3}.
Therefore we obtain the following corollary.

\begin{corollary}\label{cor:SCB_CoPE}
	Let $q\in\mathbb{R}$ and assume {\rm \textbf{(A1)}} with
	$ \cA = \cB = \mathcal{F}(S) $. Then
	\begin{equation*}
		\lim_{N \rightarrow \infty}
			\mathbb{P}_* \left[\, \forall c \in \mathcal{F}(S):~ 
			\hat{\cL}_{c - \tau_N q\sigma} \subseteq \cL_c
			~\wedge~
			\hat{\mathcal{\cU}}_{c + \tau_N q\sigma} \subseteq \cU_c
			 \,\right]
		 	= \mathbb{P}\Big[\,  \max_{s \in S} \vert G(s) \vert \prec q \,\Big]\,.
	\end{equation*}
\end{corollary}
This result is connected to the standard construction of an $(1-\alpha)$-SCB for an estimator satisfying
a ULT. In this case the intervals $\big[ l_N(s), u_N(s) \big] = \big[\hat\mu_N(s) - q\tau_N\sigma(s), \hat\mu_N(s) + q\tau_N\sigma(s) \big]$,
$s\in S$, form an asymptotic $(1-\alpha)$-SCB for $\mu$, if $q_\alpha$ is the smallest $q>0$ such that
\begin{equation*}
	\mathbb{P}\Big[\,  \max_{s \in S} \vert G(s) \vert \leq q \,\Big] \geq 1-\alpha\,.
\end{equation*}
The estimated level sets in the above corollary are now exactly the upper excursion sets of $l_N$ and the
lower excursion sets of $u_N$ over $c$, i.e.,
\begin{equation*}
	\hat{\cU}_{c + \tau_N q\sigma} = \big\{ s\in S ~\vert~ l_N(s) > c(s) \big\}\,,~~~
		\hat{\cL}_{c - \tau_N q\sigma} = \big\{ s\in S ~\vert~ u_N(s) < c(s) \big\}\,.
\end{equation*}

Our next result will be used in Section \ref{scn:testing} to connect Theorem \ref{thm:MainSCoPES} to equivalence and relevance tests.
Readers familiar with the works \citep{Dette:2020functional, Dette:2021bio, Dette:2022cov} might spot
the structural similarities between the limiting distribution of Corollary \ref{cor:Extraction}
and their limiting distribution to compute the critical threshold of their tests. This is not a coincidence
as we will explain in Section \ref{scn:testing}.
\begin{corollary}\label{cor:Extraction}
	Assume $a, b\in \mathcal{F}(S)$, $\cA = \{a\}$, $\cB = \{b\}$, $q \in \mathbb{R}$ and
	 {\rm\textbf{(A1)}}. Then
	\begin{equation*}
	\begin{split}
		\lim_{N\rightarrow\infty} \mathbb{P}_*\Big[\, 
		\hat{\cU}_{a + q\tau_N\sigma} \subseteq \cU_{a}
		 ~ ~ \wedge ~ ~
		 \hat{\cL}_{b - q\tau_N\sigma} \subseteq \cL_{b}
		  \,\Big]
			 = \mathbb{P}\Big[\, \mathfrak{T}_{a, b}( G ) \prec q \,\Big]	\,.
	\end{split}
	\end{equation*}
	under {\rm\textbf{(A2)}},{\rm\textbf{(A3)}} / {\rm\textbf{(A4)}}.
\end{corollary}
For $a = b = c \in \cF(S)$, the above corollary generalizes Theorem $1$ from \citet{Sommerfeld:2018CoPE}
as it neither requires their Assumption 2.1(a) nor continuity assumptions on $\mu$ and $c$.
To showcase this and highlight important conceptual differences to their result, we include the next corollary which
deals with the continuous case.
\begin{corollary}\label{cor:MaxImproved}
	Let $S$ be compact, $\mu, c \in C(S)$, $\cA = \cB = \{c\}$ and $q\in \mathbb{R}$.
	Assume {\rm\textbf{(A1)}}-{\rm \textbf{(A2)}}. Then
	\begin{equation*}
		\lim_{N \rightarrow \infty}
			\mathbb{P}_* \left[\, 
			\hat{\mathcal{\cU}}_{c + q\tau_N\sigma} \subseteq \cU_c
			~\wedge~
			\hat{\cL}_{c - q\tau_N\sigma} \subseteq \cL_c
			 \,\right]
		 	= \mathbb{P}\big[\,  T_{c}( G ) \prec q \,\big]\,.
	\end{equation*}
\end{corollary}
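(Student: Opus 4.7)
The plan is to obtain Corollary \ref{cor:MaxImproved} as a direct specialization of Theorem \ref{thm:MainSCoPES} with $\mathcal{C}^- = \mathcal{C}^+ = \{c\}$. Since the corollary already assumes \textbf{(A1)} and \textbf{(A2)}, the only work left is to verify that the added hypotheses (compactness of $S$, continuity of $\mu$ and $c$) make Assumptions \textbf{(A3)} and \textbf{(A4)} automatic, at which point the conclusion follows verbatim from the theorem.

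First I would dispatch \textbf{(A3)}. For any $\tilde\eta>0$ the set ${\rm cl}\,\mu^{-1}_{\{c\}_{\mp\tilde\eta}}$ is a closed subset of the compact space $S$ and is therefore compact. As noted in the remark immediately following the list of assumptions, compactness of ${\rm cl}\,\mu^{-1}_{\{c\}_{\mp\tilde\eta}}$ makes the Hausdorff convergence demanded by \textbf{(A3)} an immediate consequence of the general fact \eqref{eq:HausdorffConvergence}.

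Next I would verify the first branch of \textbf{(A4)}, namely $\mu^{-1}_{\{c\}} = \mathfrak{u}^{-1}_{\mp\{c\}}$. The inclusion $\mu^{-1}_{\{c\}}\subseteq \mathfrak{u}^{-1}_{\mp\{c\}}$ is immediate from the definition. For the reverse inclusion, let $s \in \mathfrak{u}^{-1}_{+\{c\}} = \bigcap_{\eta>0}{\rm cl}\,\mu^{-1}_{\{c\}_{+\eta}}$. For each $n$ there exist $s_n \in S$ and $\varepsilon_n \in [0,1/n]$ with $d(s_n,s)\le 1/n$ and $\mu(s_n) = c(s_n) + \varepsilon_n$. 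Continuity of $\mu$ and $c$ on $S$ then gives $\mu(s) = c(s)$, so $s \in \mu^{-1}_{\{c\}}$; the argument for $\mathfrak{u}^{-1}_{-\{c\}}$ is symmetric. This identification (which is the content invoked in Appendix \ref{App:frakSeqS}) places us in the first alternative of \textbf{(A4)}, so the assumption holds without any further continuity requirement on $G_N$.

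With \textbf{(A1)}--\textbf{(A4)} all in place for $\mathcal{C}^- = \mathcal{C}^+ = \{c\}$, Theorem \ref{thm:MainSCoPES} applies and gives precisely
\begin{equation*}
    \lim_{N\to\infty} \mathbb{P}_*\!\left[\,\hat{\mathcal{L}}_{c_q^-}\subseteq \mathcal{L}_c \;\wedge\; \hat{\mathcal{U}}_{c_q^+}\subseteq \mathcal{U}_c\,\right] = \mathbb{P}\big[\,\mathfrak{T}_{\{c\},\{c\}}(G) \prec q\,\big] = \mathbb{P}\big[\,\mathfrak{T}_c(G) \prec q\,\big],
\end{equation*}
as claimed. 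The only step with any genuine content is the identity $\mu^{-1}_{\{c\}} = \mathfrak{u}^{-1}_{\mp\{c\}}$ in the verification of \textbf{(A4)}; everything else is a specialization of the general theorem.
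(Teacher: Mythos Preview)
Your proposal is correct and follows essentially the same approach as the paper: the text preceding the corollary already states that compactness of $S$ together with $\mu,c\in C(S)$ yields $\mathfrak{u}^{-1}_{\mp\{c\}}=\mu^{-1}_c$ so that \textbf{(A4)} is automatic, and the remark after the assumptions dispatches \textbf{(A3)} by compactness, leaving the result as a direct specialization of Theorem~\ref{thm:MainSCoPES}. Your explicit verification of $\mu^{-1}_{\{c\}}=\mathfrak{u}^{-1}_{\mp\{c\}}$ via a sequence argument is exactly what the paper defers to Appendix~\ref{App:frakSeqS}.
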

The reason why the above result does not need the non-tangentiality assumption
(\citet[Assumption 2.1(a)]{Sommerfeld:2018CoPE}) is that the inclusion statement
on the r.h.s. slightly differs from the inclusion statement in \citet{Sommerfeld:2018CoPE}.
Expressing their inclusion statement $\hat{\cA}^+_c \subseteq \cA_c \subseteq \hat{\cA}^-_c$ in our notation yields
\begin{equation*}
\begin{split}
			S \setminus \hat{\cL}_{c + q\tau_N\sigma} \subseteq S \setminus \cL_{c}
			~ ~ ~&\wedge~ ~ ~
			\hat{\cL}_{c - q\tau_N\sigma} \subseteq \cL_c\,.
\end{split}
\end{equation*}
Thus, compared to \citet{Sommerfeld:2018CoPE} we replaced the statement
$S \setminus \hat{\cL}_{c + q\tau_N\sigma} \subseteq S \setminus \cL_{c}$
by $\hat{\mathcal{\cU}}_{c + q\tau_N\sigma} \subseteq \cU_c$.
An illustration why this is necessary to obtain the sharp upper bound without additional conditions
can be found in Figure \ref{fig:GlosedGammaC} from Appendix \ref{scn:ChoiceInclusion}.

Another strong argument for our change in the definition of the inclusion
statement compared to \citet{Sommerfeld:2018CoPE} is that the random sets
$$
	\hat{\cU}_{c + q\tau_N\sigma} \subseteq \cU_{c}\,,~~~~
	S \setminus \Big( \hat{\cU}_{c + q\tau_N\sigma} \cup \hat{\cL}_{c - q\tau_N\sigma} \Big)\,,~~~~
	\hat{\cL}_{c - q\tau_N\sigma}
$$
from Corollary \ref{cor:MaxImproved} are a partition of $S$, which satisfies with high probability
	$$
		\hat{\cU}_{c + q\tau_N\sigma} \subseteq \cU_{c}\,,\quad
			\mu^{-1}_c
			\subseteq S \setminus \big( \hat{\cU}_{c + q\tau_N\sigma}  \cup \hat{\cL}_{c - q\tau_N\sigma} \big)
			 \,,\quad
		 \hat{\cL}_{c - q\tau_N\sigma} \subseteq \cL_{c}.
	$$
This means that $S$ is divided into three regions. For the sets $ \hat{\cU}_{c + q\tau_N\sigma}$ and
$\hat{\cL}_{c - q\tau_N\sigma}$ we know that (asymptotically) with a probability given by $q$
the function $\mu$ on these regions is larger and smaller than $c$, respectively.
We can make no statement about any $s$ in the third region, yet we can guarantee with a probability depending on $q$
that this region is a superset of the level set $\mu^{-1}_c$.
Noteworthy, the partition $S \setminus \hat{\cL}_{c + q\tau_N\sigma}$,
$ \hat{\cL}_{c + q\tau_N\sigma} \cap S \setminus\hat{\cL}_{c - q\tau_N\sigma} $
and $\hat{\cL}_{c - q\tau_N\sigma}$ of $S$ resulting from \cite[Theorem 1]{Sommerfeld:2018CoPE}
does not have such nice probabilistic properties because
the preimage $\mu^{-1}_c$ can intersect 
	$S \setminus \hat{\cL}_{c + q\tau_N\sigma}$ and
	$ \hat{\cL}_{c + q\tau_N\sigma} \cap S \setminus\hat{\cL}_{c - q\tau_N\sigma}  $.


\subsection{Estimation of the Generalized Preimage and Bootstrapping the Quantile of SCoRE Sets}\label{scn:estSC}
Estimation of $q$ such that the families derived in Theorem \ref{thm:MainSCoPES} are
$(1-\alpha)$-SCoRE sets requires estimation of $\mathfrak{u}^{\pm}_\cC$ for
$\cC \subseteq\mathcal{F}(S)$.
In the case of $S$ being compact,
a Hausdorff-distance consistent estimator can be derived by replacing $\mu$ by
$\hat\mu_N$ in the definition of ${\rm cl}\,\mu^{-1}_{\cC_{\pm\eta}}$ and choosing $\eta$
depending on $N$ appropriately.
More precisely, if $(k_N)_{N\in\mathbb{N}}$ is a positive sequence converging to zero,
we define the following estimator of $\mathfrak{u}^{\pm}_\cC$ by
\begin{equation}\label{eq:estSC}
	\hat{\mathfrak{u}}_\cC^{\pm} = {\rm cl}\big\{\, s\in S~ \big\vert~ \exists c \in \cC:~
		0 \leq \mp \big( \hat\mu_N(s) - c(s) \big) \leq k_N\tau_N\sigma(s) \,\big\}\,.
\end{equation}
This idea is similar to the estimation of the extremal sets in \citet{Dette:2020functional} and
related work. The consistency in Hausdorff-distance of this estimator is stated in the next result.
\begin{theorem}\label{thm:SCest}
	Let $S$ be compact, $\cC \subseteq\mathcal{F}(S)$ and $(k_N)_{N\in\mathbb{N}}$ be a positive sequence such that $ \lim_{N\rightarrow \infty} k_N\tau_N = 0$
	and $\lim_{N\rightarrow \infty} k_N / \log\big( \tau_N^{-2} \big) = \kappa$ for some $\kappa>0$, then
	$
	d_H\big( \hat{\mathfrak{u}}_\cC^{\pm}, \mathfrak{u}^{\pm}_\cC \big)
			\rightarrow 0
	$
	 in outer probability as $N\rightarrow\infty$.
\end{theorem}
In principle, this consistency result allows us to estimate the quantile of SCoRE sets from Theorem
\ref{thm:MainSCoPES} along the lines described in \citet{Dette:2020functional}.
A general strategy\footnote{not necessarily the best in a given probabilistic model} to achieve this
is to show within the assumed probabilistic model that realizations of a bootstrap processes
$B_N^{(1)},\ldots, B^{(R)}_N$ can be obtained which satisfy
\begin{equation*}
\begin{split}
	\Big( G_N, B_N^{(1)},\ldots, B_N^{(R)} \Big) \rightsquigarrow \Big( G, G^{(1)},\ldots, G^{(R)} \Big)
\end{split}
\end{equation*}
weakly in $\big( \ell^\infty({\rm cl}\,\mu^{-1}_{\cA_{-\tilde{\eta}}} \cup {\rm cl}\,\mu^{-1}_{\cB_{+\tilde{\eta}}}) \big)^{R+1}$.
Here $G^{(1)},\ldots, G^{(R)}$ are i.i.d. copies of $G$. Combining this with Theorem
\ref{thm:SCest} and a simple generalization of
Lemma B.3 from \citet{Dette:2020functional}, which can be derived using our Appendix \ref{App:WeakMaxConvergence},
yields for any sequence $(\eta_N)_{N\in\mathbb{N}}$ converging to zero,
\begin{equation*}
\begin{split}
	&\Big( T_{{\rm cl}\cA_{-\eta_N}, {\rm cl}\cB_{-\eta_N}}(G_N),
		  T_{{\rm cl}\cA_{-\eta_N}, {\rm cl}\cB_{-\eta_N}}\big(B_N^{(1)}\big),\ldots,
		  T_{{\rm cl}\cA_{-\eta_N}, {\rm cl}\cB_{-\eta_N}}\big(B_N^{(R)}\big) \Big)\\
		   &\rightsquigarrow
		   \Big( \mathfrak{T}_{\cA, \cB}(G),
		   		 \mathfrak{T}_{\cA, \cB}\big(G^{(1)}\big),\ldots,
		         \mathfrak{T}_{\cA, \cB}\big(G^{(R)}\big) \Big)
\end{split}
\end{equation*}
weakly in $\mathbb{R}^{R+1}$ with
$\mathfrak{T}_{\cA, \cB}\big(G^{(1)}\big),\ldots, \mathfrak{T}_{\cA, \cB}\big(G^{(R)}\big) \sim \mathfrak{T}_{\cA, \cB}(G)$ i.i.d..

\section{Hypothesis Testing and SCoRE Sets}\label{scn:testing}

In this section we connect SCoRE sets to statistical hypothesis testing and use
Theorem \ref{thm:MainSCoPES} to develop
novel statistical hypothesis tests. Our developments can be seen as an extension of
the duality of confidence intervals and hypothesis tests in point hypothesis testing as given in
\cite[Thm 3.5.1]{Lehmann:2005testing} to multiple hypothesis testing.
The following definition will reduce and unify the notations in the upcoming sections.
\begin{definition}\label{def:ShorteningTest}
	Let $\Theta$ be a set, $\mathbf{H}_{0,\theta}$ be a null hypothesis and $\mathbf{H}_{1,\theta}$
	an corresponding alternative hypothesis for $\theta\in \Theta$. Assume there is a statistical test
	for $\mathbf{H}_{0,\theta}$ at each $\theta\in \Theta$ which decides whether $\mathbf{H}_{0,\theta}$ is rejected.
	The set of true null hypotheses and its estimate are
	\begin{equation*}
	\begin{split}
		\cH_0       &= \big\{ \theta\in \Theta ~\vert~ \mathbf{H}_{0,\theta} \text{ is true } \big\}\,,~ ~ ~
		\hat{\cH}_0 = \big\{ \theta\in \Theta ~\vert~\mathbf{H}_{0,\theta} \text{ is not rejected } \big\}\,.\\
	\end{split}
	\end{equation*}
	Similarly, the set were the alternative hypothesis is true and its estimate are
	\begin{equation*}
	\begin{split}
		\cH_1       &= \big\{ \theta\in \Theta ~\vert~ \mathbf{H}_{1,\theta} \text{ is true } \big\}\,,~ ~ ~
 		\hat{\cH}_1 = \big\{ \theta\in \Theta ~\vert~ \mathbf{H}_{0,\theta} \text{ is rejected } \big\}\,.
	\end{split}
	\end{equation*}
\end{definition}
\begin{definition}[Strong Family-wise Error Rate Control]\label{def:FWER}
	Let $\{ \cT_\theta \}_{\theta\in\Theta}$ be a family of tests such that each $\cT_\theta$ is
	a test for a null hypothesis $\mathbf{H}_{0,\theta}$,
	where $\theta\in\Theta$.
	We say that this family controls the \textit{family-wise error rate} (FWER)
	in the strong sense at level $\alpha$, if
	\begin{equation*}
		{\rm FWER}(\cH_0) = \Prb^*\big[\, \exists s\in \cH_0:~ \cT_\theta \text{ is rejected} \,\big]
		\leq \alpha
	\end{equation*}
	for all possible sets $\cH_0 \subseteq \Theta$.\footnote{We supress that $\Prb^*$ might depend on $\cH_0$.}
\end{definition}
\begin{remark}\label{rmk:FWER}
	It follows immediately from Definition \eqref{def:ShorteningTest} that
	$$ {\rm FWER}(\cH_0) = 1 - \Prb_*\big[\, \cH_0 \subseteq \hat{\cH}_0 \,\big]\,.$$
\end{remark}

\subsection{Multiple Hypothesis Tests Derived from SCoRE Sets}\label{sec:duality}
This section is complementary to the tests derived from confidence sets in
\cite{Aitchison:1964confidence}. While (simultaneous) confidence sets for a parameter $\mu$
allow to derive joint tests for essentially all possible hypotheses, SCoRE sets
restrict the set of jointly testable hypotheses, which can lead to an increase
in power compared to confidence set based tests.
In this section we assume a more general framework than in the rest of this article
to highlight the close connection between confidence regions for excursion sets and
tests that control the FWER in the strong sense.

Let $ \mu \in \ell^\infty(S)$ be a target function, $\cA,\cB\subseteq\cF(S)$
sets of known excursion functions and recall that $\cU_a$ and $\cL_b$ are the upper/lower
excursion sets of $\mu$ above/below $a$/$b$ respectively.
\begin{definition}(SCoRE sets)
	Let $\alpha\in(0,1)$, $\cP(S)$ denote the power set of $S$ and
	$\cU_a:\,\Omega \rightarrow \cP(S)$, $\omega \mapsto \hat \cU_a(\omega)$
	and $\cL_a:\,\Omega \rightarrow \cP(S)$, $\omega \mapsto \hat\cL_b (\omega)$ be set-valued function.
	If the families $\{\hat\cU_a \}_{a\in\cA}$ and $\{\hat\cL_b \}_{b\in\cB}$ satisfy
	\begin{equation*}
		\Prb_*\Big[ \forall a\in\cA\, \forall b\in \cB:~  \hat \cU_a \subseteq \cU_a
	\wedge  \hat \cL_b \subseteq \cL_b \Big] \geq 1-\alpha\,,
	\end{equation*}
	for all $\mu \in \ell^\infty(S)$ we say that  $\{\cU_a \}_{a\in\cA}$ and $\{\cL_b \}_{b\in\cB}$
	form $(1-\alpha)$-Simultanuous COnfidence	Region of Excursion (SCoRE) sets
	for $(\cA,\cB)$.
\end{definition}
The hypotheses, which we want to test jointly for $a\in \cA$ and $b\in \cB$, are
\begin{equation}\label{eq:generalHypotheses}
\begin{split}
	\mathbf{H}_{0,s}^{\leq a} :&~~ \mu(s) \leq a(s) \quad \text{ vs } \quad \mathbf{H}_{1,s}^{\leq a}: \mu(s) > a(s)\\
	\mathbf{H}_{0,s}^{\geq b}:&~~ \mu(s) \geq b(s) \quad \text{ vs } \quad \mathbf{H}_{1,s}^{\geq b}: \mu(s) < b(s)\\
\end{split}
\end{equation}
Using definition \ref{def:ShorteningTest} we define the joint FWER for a family of tests
on the hypotheses \eqref{eq:generalHypotheses} by
\begin{equation}\label{eq:FWERscore}
\begin{split}
	{\rm FWER}\Big( \cH_{0}^{\leq \cA},\cH_{0}^{\geq \cB} \Big)
	= 1 - 	\Prb_*\Big[ \forall a\in\cA\, \forall b\in \cB:~  \hat{\cU}_{a} \subseteq \cU_a
	\wedge  \hat{\cL}_{b} \subseteq \cL_b \Big]
\end{split}
\end{equation}
which by Remark \ref{rmk:FWER} is consistent with Definition \ref{def:FWER} and therefore
the following immediate consequence is not surprising.
\begin{proposition}(SCoRE sets and Multiple Hypothesis Test Duality)\label{prop:duality}
	Assume that  $\{\hat\cU_a \}_{a\in\cA}$ and $\{\hat\cL_b \}_{b\in\cB}$ form $(1-\alpha)$-SCoRE
	sets for $(\cA,\cB)$, then the family of tests given by
	\begin{equation*}
	\begin{split}
		\cT_{s}^{\leq a} = \begin{cases}
								\text{reject}\,, & \text{ if }
									s \in \hat{\cU}_{a}\\
								\text{accept}\,, & \text{ else }
							\end{cases}\,,	\quad					
		\cT_{s}^{\geq b} = \begin{cases}
								\text{reject}\,, & \text{ if }
									s \in \hat{\cL}_{b}\\
								\text{accept}\,, & \text{ else }
							\end{cases}\,,
	\end{split}
	\end{equation*}	
	for $a\in\cA$ and $b\in\cB$ control the joint FWER at level $\alpha$ for all $\mu\in\ell^\infty(S)$.
	
	Conversely, assume we have tests $\cT_{s}^{\leq a}$ and $\cT_{s}^{\geq b}$
	which  control the joint FWER for all $a\in\cA$ and $b\in\cB$ in the strong sense,
	then $\{\hat{\cH}_{1}^{\leq a} \}_{a\in\cA}$ and $\{ \hat{\cH}_{1}^{\geq a} \}_{b\in\cB}$
	form $(1-\alpha)$-SCoRE sets for $(\cA,\cB)$.
\end{proposition}
\begin{remark}
	The same duality holds between asymptotic tests and asymptotic SCoRE sets and for
	the inclusion statement from \cite{Sommerfeld:2018CoPE}. 
	Additionally, it establishes that, for example, stagewise multiple testing procedures such as the
	Bonferroni-Holm procedure \cite{Holm:1979}, can be used to
	construct $(1-\alpha)$-SCoRE sets for $(\cA,\cB)$.
\end{remark}

An immediate application of this duality are simultaneous confidence bands because a slight variation
on the main theorem of \cite{Ren:2022} shows that they define simultaneous SCoRE
sets for $\big(\cF(S),\cF(S) \big)$; here the change of the inclusion statement greatly simplifies the proof.
\begin{proposition}\label{prop:CoPEasSCB}
	Assume that $ \mu \in \ell^\infty(S)$ and that the family indexed by $s\in S$
	of random intervals $\big[ \hat{l}(s), \hat{u}(s) \big]$
	forms an $(1-\alpha)$-SCB for $\mu$. Then
\begin{equation}\label{eq:SCBCoPE}
	\Prb_*\left[\, \forall c  \in \mathcal{F}( S ):~
		\hat{\cU}_{c} \subseteq \cU_c
			 ~\wedge~ 
	 	\hat{\cL}_{c} \subseteq \cL_c
		\,\right]
 \geq 1-\alpha
\end{equation}
\end{proposition}
\begin{remark}
	As in \cite{Ren:2022} the proof shows the equivalence of
	\begin{equation*}
	\begin{aligned}
		&\textbf{(E1)}~\forall s \in  S :~  \hat{l}(s)  \leq \mu(s) \leq \hat{u}(s)\\
		&\textbf{(E2)}~\forall c \in \mathcal{F}( S ):~
			\hat{\cL}_{c} \subseteq \cL_c
			~\wedge~
			\hat{\cU}_{c} \subseteq \cU_c\,.
	\end{aligned}
	\end{equation*}	
	Therefore an upper bound of the form
	\begin{equation*}
		\Prb^*\left[\, \forall c  \in \mathcal{F}( S ):~
		\hat{\cU}_{c} \subseteq \cU_c
			 ~\wedge~ 
	 	\hat{\cL}_{c} \subseteq \cL_c
		\,\right]
 		\leq 1-\beta
	\end{equation*}
	for $\beta \leq \alpha$ yields an $(1-\beta)$ upper bound for \eqref{eq:SCBCoPE} as well.
\end{remark}
A SCB even yields joint tests for less standard hypotheses such
as the local relevance hypothesis or the equivalence hypothesis, i.e.,
\begin{equation*}
\begin{split}
	\mathbf{H}_{0,rel}^{s,\Delta}:~ \mu(s) \in [-\Delta, \Delta] ~~\,&vs.~~\mathbf{H}_{1,rel}^{s,\Delta}:~ \mu(s) \notin [-\Delta, \Delta]\\
	\mathbf{H}_{0,eq}^{\Delta}:~ \exists s \in S: \mu(s) \notin (-\Delta, \Delta) ~~\,&vs.~~\mathbf{H}_{1,eq}^{\Delta}:~\forall s \in S: \mu(s) \in (-\Delta, \Delta)\\
\end{split}
\end{equation*}
for some $\Delta>0$, by defining the decision rules as
\begin{equation*}
\begin{split}
	~\text{\textbf{reject}}~~ H_{0,rel}^{s,\Delta}
	 		~ &:\Longleftrightarrow ~ s \in \hat\cU_{\Delta} \cup \hat\cL_{-\Delta}
	 ~\Longleftrightarrow~ \hat{u}(s) < -\Delta~\vee~\hat{l}(s) > \Delta\\
	\text{\textbf{reject}}~~ H_{0.eq}^{\Delta}
	 		~ &:\Longleftrightarrow ~  \hat\cU_{-\Delta} \cap \hat\cL_{\Delta}  = S
	 ~\Longleftrightarrow~ \forall s\in S: \big[ \hat{l}(s), \hat{u}(s) \big] \in (-\Delta, \Delta) 
\end{split}
\end{equation*}
How these observations relate to recently established relevance and equivalence tests
 for $C(S)$-valued data based on the supremums norm and why SCoRE sets  despite the duality
 given in Proposition \ref{prop:duality} are a useful concept, are the topics of the next sections.

\subsection{Oracle Limit Distributions of Multiple Hypothesis Tests}\label{sec:oracle}
From the viewpoint of the previous section, we will now present a possible interpretation
of Theorem \ref{thm:MainSCoPES} as providing oracle limit distributions of standard multiple
hypothesis tests derived from a ULT and thereby answering the question why the concept of
SCoRE sets is useful.

Here we assume the setting and notations of Theorem \ref{thm:MainSCoPES}. In particular,
we assume that an estimator $\hat\mu_N$ of a target function $\mu$ satisfying \textbf{(A1)}-\textbf{(A4)}
is given, as well as, two sets $\cA,\cB\subseteq\cF(S)$. 
Thus, as $\hat\mu_N$ satisfies a ULT the canonical tests for the hypotheses \eqref{eq:generalHypotheses} are
\begin{equation}\label{eq:ScoRESTestallg}
\begin{split}
	&\cT^{s, a}_{\leq,q} = \begin{cases}
							\text{reject}\,, & \text{ if } \hat\mu_N(s) \geq a(s) + q\tau_N\sigma(s)\,,\text{ i.e., }
							s \in \hat{\cU}_{a+ q\tau_N\sigma}\\
							\text{accept}\,, & \text{ else }
						\end{cases}	\\					
	&\cT^{s, b}_{\geq,q} = \begin{cases}
							\text{reject}\,, & \text{ if } \hat\mu_N(s) \leq a(s) - q\tau_N\sigma(s)\,,\text{ i.e., }
							s \in \hat{\cL}_{b - q\tau_N\sigma}\\
							\text{accept}\,, & \text{ else }
						\end{cases}\,,
\end{split}
\end{equation}
where $q\geq 0$ is a parameter, which can be chosen to control the level of the test. By Definition \ref{def:ShorteningTest} it holds that
$ \cH_0^{\leq a} = S \setminus \cU_a$, 
$\hat{\cH}_0^{\leq a} = S \setminus \hat{\cU}_{a+ q\tau_N\sigma}$,
$\cH_0^{\geq b} = S \setminus \cL_b$ and
$ \hat{\cH}_0^{\geq b} = S \setminus \hat{\cL}_{b- q\tau_N\sigma}
$.
Thus, applying the limes superior and limes inferior to \eqref{eq:FWERscore} we obtain from
Theorem \ref{thm:MainSCoPES} the next corollary. We call this result the \textit{oracle limit distribution}
of the joint tests given in \eqref{eq:ScoRESTestallg} over all $a\in\cA$ and $b\in\cB$ because it provides
explicitly the sharp limit distribution of the joint FWER for each possible $\mu\in\ell^\infty(S)$.
\begin{corollary}[Oracle Limit Distribution of FWER]\label{cor:oracleLimit}
	Assume the notation and assumptions of Theorem \ref{thm:MainSCoPES}. Then the
	joint FWER of the tests given by \eqref{eq:ScoRESTestallg} on the hypotheses
	\eqref{eq:generalHypotheses} for $a\in\cA$ and $b\in\cB$ satisfies
	\begin{equation*}
	\begin{split}
		\lim_{N\rightarrow\infty}{\rm FWER}\Big(\cH_0^{\leq\cA},\cH_0^{\geq\cB}\Big)
		= 1 - \mathbb{P}\big[\, \mathfrak{T}_{\cA, \cB}( G ) \prec q \,\big]
	\end{split}
	\end{equation*}
	for all $\mu \in \ell^\infty(S)$. In particular, given $\alpha\in(0,1)$ and $q_\alpha$ such that 
	\begin{equation*}
	\begin{split}
		 \mathbb{P}_*\big[\, \mathfrak{T}_{\cA, \cB}( G ) < q_\alpha \,\big] \geq 1 - \alpha\,,
	\end{split}
	\end{equation*}
	it holds that the tests given by \eqref{eq:ScoRESTestallg} using $q=q_\alpha$ control the joint FWER
	over all hypotheses $\mathbf{H}_{0,s}^{\leq a}$ and $\mathbf{H}_{0,s}^{\geq b}$ for all $s\in S$, $a\in\cA$
	and $b\in\cB$ at level $\alpha$.	
\end{corollary}
\begin{remark}
	It is noteworthy that changing the  hypotheses in \eqref{eq:generalHypotheses} to
	\begin{equation*}
	\begin{split}
		H_{0,s}^{< a}:&~~ \mu(s) < a(s) \quad \text{ vs } \quad H_{1,s}^{< a}: \mu(s) \geq a(s)\\
		H_{0,s}^{> b}:&~~ \mu(s) > b(s) \quad \text{ vs } \quad H_{1,s}^{> b}: \mu(s) \leq b(s)\\
	\end{split}
	\end{equation*}
	 does not have the oracle limit distribution from Corollary \ref{cor:oracleLimit}.
	 The reason is that the natural test obtained from the ULT is
	\begin{equation*}
	\begin{split}
		&\cT_{s,q}^{\leq a} = \begin{cases}
								\text{reject}\,, & \text{ if }
									s \in S \setminus \hat{\cL}_{a+ q\tau_N\sigma}\\
								\text{accept}\,, & \text{ else }
							\end{cases}	\\					
		&\cT_{s,q}^{\geq b} = \begin{cases}
								\text{reject}\,, & \text{ if }
									s \in S \setminus \hat{\cU}_{b - q\tau_N\sigma}\\
								\text{accept}\,, & \text{ else }
							\end{cases}\,.
	\end{split}
	\end{equation*}	 
	 Thus, while the asymptotic upper bound of the FWER remains the same, the lower bound
	 is only identical under further assumptions, for example, on $\cA$ and $\cB$, or under
	 restrictions of the set of possible target functions $\mu$ as provided by
	 \cite[Assumption (A.ii)]{Mammen:2013} and \cite[Assumption 2.1(a)]{Sommerfeld:2018CoPE}, compare Section
	 \ref{scn:ChoiceInclusion} in particular Figure \ref{fig:Cope_Def_Problem}. 
	Thus, from our asymptotic oracle distribution view on multiple testing it is preferable that the null
	 hypothesis at each $s$ is a closed set because otherwise the FWER can only be controlled conservatively
	 for some $\mu$. 
\end{remark}
\begin{remark}
	Corollary \ref{cor:oracleLimit} means that Theorem \ref{thm:MainSCoPES} yields
	joint tests for the hypotheses \eqref{eq:generalHypotheses} for any two sets of
	functions $\cA$ and $\cB$.
	An inconvenience is that $\mathfrak{T}_{\cA, \cB}( G )=-\infty$ for all $\mu$ such that
	$\mathfrak{u}^-_\cA \cup \mathfrak{u}^+_\cB = \emptyset$.
	Thus, any $q\in\mathbb{R}$ yields in this case a test that asymptotically controls the FWER
	in the strong sense at any $\alpha>0$.
	In the next sections we propose solutions based on ideas from
	\cite{Dette:2020functional} that make the quantile $q$ unique.
\end{remark}

\subsection{Global Relevance Tests}\label{sec:globalRel}
In the literature on statistical inference in the Banach space $C(S)$ the global relevance hypothesis, i.e.,
\begin{equation*}
\begin{split}
	\mathbf{H}_{0}\mathbf{:}~ ~ \forall s\in S:~\mu(s) \in \big[-\Delta, \Delta \big]
	~ ~ ~\text{ vs.}~ ~ ~
	\mathbf{H}_{1}\mathbf{:}~ ~\exists s\in S:~ \mu(s) \notin \big[-\Delta, \Delta \big]
\end{split}
\end{equation*}
for $\Delta\geq 0$, has recently gained attention in \cite{Dette:2020functional,Dette:2022cov,Dette2021FunctionOnFunction}.
In all these articles the strategy to obtain
a test at level $\alpha$ is based on the supremums norm $\Vert f \Vert_\infty = \sup_{s\in S} \vert f(s)\vert$ for
$s\in S$. The next box summarizes this approach conceptually.
\begin{mdframed}
\begin{enumerate}
	\item[I.] Define an estimator $\hat\mu_N$ of $\mu$ and prove that it satisfies a ULT in $C(S)$, i.e.,
		  there exists a random process $G$ on $S$ such that
		  $\big( \hat\mu_N - \mu \big) / \tau_N \rightsquigarrow G$ weakly in $C(S)$.
	\item[II.] Use the CLT from I. to prove
			   $\big( \Vert \hat\mu_N \Vert_\infty - \Vert \mu \Vert_\infty \big) / \tau_N \rightsquigarrow T\big( \cE \big)$
		  for
			\begin{equation}\label{eq:DetteLimit}
				T\big( \cE \big) = \max\Big\{ \sup_{s\in\cE^+} G(s), \sup_{s\in\cE^-} -G(s) \Big\}
 			\end{equation}
 			depending on the extremal sets $\cE^\pm = \big\{ s\in S ~\vert~ \mu(s) = \pm\Vert \mu \Vert_\infty \big\}$.
 	\item[III.] Obtain the $(1-\alpha)$ quantile $q_{1-\alpha}$ of $T\big( \cE \big)$. 
 	\item[IV.] Reject $\mathbf{H}_{0}$ at significance level $\alpha$, if $\Vert \hat\mu_N \Vert_\infty > \Delta  + \tau_N q_{1-\alpha}$ 
\end{enumerate}
\end{mdframed}
The latter is justified by their observation that II. implies
\begin{equation}\label{eq:DetteRelTestProps}
	\lim_{N\rightarrow\infty}\Prb\Big[ \Vert \hat\mu_N \Vert_\infty > \Delta  + \tau_Nq_{1-\alpha} \Big]
	= \begin{cases}
			0\,,      & \Vert \mu \Vert_\infty < \Delta \\
			\alpha\,, & \Vert \mu \Vert_\infty = \Delta \\
			1\,, & \Vert \mu \Vert_\infty > \Delta
	   \end{cases}
\end{equation}

As the structure of the limit distribution \eqref{eq:DetteLimit} is suspiciously similar to
\begin{equation}\label{eq:Limit2functions}
	\mathfrak{T}_{a, b} = \max\Bigg\{ \sup_{s \in \mathfrak{u}^{-}_a} G(s),
				~ \sup_{s \in \mathfrak{u}^{+}_b} -G(s) \Bigg\}\,,
\end{equation}
the limit distribution of Corollary \ref{cor:Extraction}, it is natural to ask whether and how they are related.
To clarify this, we first note that all considered functions or sample paths of processes are in $C(S)$ and $S$ is compact.
Hence \textbf{(A3)} and  \textbf{(A4)} are satisfied. Moreover,
we have that Step I. implies \textbf{(A1)} and  \textbf{(A2)} for the choice $\sigma(s) = 1$
for all $s \in S$ by \cite[1.3.10 Theorem]{Vaart:1996weak}. Finally, the extremal sets $\mathcal{E}^\pm $ satisfy
$$
 \mathcal{E}^\pm = \big\{s \in S ~\vert~ \mu(s) = \pm \Vert \mu \Vert_\infty \big\}
 				 = \mu^{-1}_{\pm\Vert \mu \Vert_\infty} = \mathfrak{u}^{\pm}_{\pm\Vert \mu \Vert_\infty}.
$$
which shows using $a(s) = \Vert \mu \Vert_\infty$ and $b(s) =  -\Vert \mu \Vert_\infty$ for all $s\in S$ in \eqref{eq:Limit2functions} that 
\begin{equation}
	T\big( \cE \big) = \mathfrak{T}_{\Vert \mu \Vert_\infty, -\Vert \mu \Vert_\infty}\,.
\end{equation}
Assuming $q_{1-\alpha}$ being given by Step III,
we obtain from Corollary \ref{cor:Extraction} that
	\begin{equation*}
	\begin{split}
		 1-\alpha
		 	&= \lim_{N\rightarrow\infty} \mathbb{P}_*\Big[\, 
		\hat{\cU}_{\Vert \mu \Vert_\infty + \tau_Nq_{1-\alpha}} \subseteq \cU_{\Vert \mu \Vert_\infty}
		 ~ ~ \wedge ~ ~
		 \hat{\cL}_{-\Vert \mu \Vert_\infty - \tau_Nq_{1-\alpha}} \subseteq \cL_{-\Vert \mu \Vert_\infty}
		  \,\Big]\\
			&=\lim_{N\rightarrow\infty} \mathbb{P}_*\Big[\, 
		\hat{\cU}_{\Vert \mu \Vert_\infty + \tau_Nq_{1-\alpha}} = \emptyset
		 ~ ~ \wedge ~ ~
		 \hat{\cL}_{-\Vert \mu \Vert_\infty - \tau_Nq_{1-\alpha}} = \emptyset
		  \,\Big]\\
			&=\lim_{N\rightarrow\infty} \mathbb{P}_*\Big[\,
				\Vert \hat\mu_N \Vert_\infty \leq \Vert \mu \Vert_\infty + \tau_Nq_{1-\alpha}
		  \,\Big]
	\end{split}
	\end{equation*}
which is equivalent to the case $\Delta = \Vert \mu \Vert_\infty$ in \eqref{eq:DetteRelTestProps}.
Similarly, Corollary \ref{cor:Extraction} yields for $\Delta > \Vert \mu \Vert_\infty$ that
	\begin{equation*}
	\begin{split}
		1 &= \mathbb{P}\Big[\, \mathfrak{T}_{\Delta, -\Delta}( G ) \leq q_{1-\alpha} \,\Big]\\
		 	&= \lim_{N\rightarrow\infty} \mathbb{P}_*\Big[\, 
		\hat{\cU}_{\Delta + \tau_Nq_{1-\alpha}} \subseteq \cU_{\Delta}
		 ~ ~ \wedge ~ ~
		 \hat{\cL}_{-\Delta - \tau_Nq_{1-\alpha}} \subseteq \cL_{-\Delta}
		  \,\Big]\\
			&=\lim_{N\rightarrow\infty} \mathbb{P}_*\Big[\,
				\Vert \hat\mu_N \Vert_\infty \leq \Delta + \tau_Nq_{1-\alpha}
		  \,\Big]
	\end{split}
	\end{equation*}
as $\mu^{-1}_{\pm\Delta} = \emptyset$. Note that this is equivalent to the case
$\Delta > \Vert \mu \Vert_\infty$ in \eqref{eq:DetteRelTestProps}.
In order to obtain the case $\Delta < \Vert \mu \Vert_\infty$, we compute
	\begin{equation*}
	\begin{split}
		&\liminf_{N\rightarrow\infty} \mathbb{P}_*\Big[\,
				\Vert \hat\mu_N \Vert_\infty > \Delta + \tau_Nq_{1-\alpha}
		  \,\Big]\\
		&\quad\quad=  \liminf_{N\rightarrow\infty} \mathbb{P}_*\Big[\, 
		\hat{\cU}_{\Delta + \tau_Nq_{1-\alpha}} \neq \emptyset
		 ~ ~ \vee ~ ~
		 \hat{\cL}_{-\Delta - \tau_Nq_{1-\alpha}} \neq \emptyset
		  \,\Big]\\
		&\quad\quad=  1 - \limsup_{N\rightarrow\infty} \mathbb{P}^*\Big[\, 
		\hat{\cU}_{\Delta + \tau_Nq_{1-\alpha}} = \cU_{\Vert \mu \Vert_\infty}
		 ~ ~ \wedge ~ ~
		 \hat{\cL}_{-\Delta - \tau_Nq_{1-\alpha}} =  \cL_{-\Vert \mu \Vert_\infty}
		  \,\Big]\\
		&\quad\quad\geq  1 - \limsup_{N\rightarrow\infty} \mathbb{P}^*\Big[\, 
		\hat{\cU}_{\Delta + \tau_Nq_{1-\alpha}} \subseteq \cU_{\Vert \mu \Vert_\infty}
		 ~ ~ \wedge ~ ~
		 \hat{\cL}_{-\Delta - \tau_Nq_{1-\alpha}} \subseteq  \cL_{-\Vert \mu \Vert_\infty}
		  \,\Big] \\
		  &\quad\quad = 1
	\end{split}
	\end{equation*}
where the last equality follows from Lemma \ref{lem:CoPEContainLemma} from Appendix \ref{Appendix:Auxillary}. Thus, we recovered also the
case $\Delta < \Vert \mu \Vert_\infty$ in \eqref{eq:DetteRelTestProps}.

After the above analysis it is not surprising that Corollary \ref{cor:Extraction} enables us to test the hypothesis
\begin{equation*}
\begin{split}
	\mathbf{H}_{0}\mathbf{:}~ ~ \forall s\in S:~\mu(s) \in \big[ b(s), a(s) \big]
	~ \text{ vs.}~ 
	\mathbf{H}_{1}\mathbf{:}~ ~\exists s\in S:~ \mu(s) \notin \big[ b(s), a(s) \big]\,.
\end{split}
\end{equation*}
for $a,b\in\mathcal{F}(S)$ under weak condition. The global relevance test for the above hypothesis based on Corollary \ref{cor:Extraction} is
\begin{equation}\label{eq:GlobalRelevanceSCoRESI}
	\text{\textbf{Reject} } \mathbf{H}_{0} \quad\Longleftrightarrow\quad \hat{\cU}_{a + q\tau_N\sigma} \neq \emptyset
							~~\vee~~         \hat{\cL}_{-b - q\tau_N\sigma} \neq \emptyset\,.
\end{equation}
In contrast to the general strategy discussed above this test can take the asymptotic variance of the
estimator $\hat\mu_N$ into account by setting $\sigma$ identical to the square root of the asymptotic variance.
Similar arguments as above then prove the following theorem which generalizes \eqref{eq:DetteRelTestProps}.
\begin{theorem}\label{thm:Dette_Generalized}
	 Let $\alpha \in (0,1)$, $a,b\in\cF(S)$, $\delta = \max\{ \delta_a, \delta_b \}$ with
	 \begin{equation}
	 	\delta_a = \sup_{ s \in S } \big\{ \mu(s) - a(s) \big\}\,,\quad\quad\quad \delta_b = \sup_{ s \in S } \big\{ b(s) - \mu(s)\big\}\,,
	 \end{equation}
	 $\cA = \big\{ a + \delta \big\}$  and  $\cB =  \big\{ b - \delta \big\}$.
	 Assume that {\rm\textbf{(A1)}} is satisfied and $q_\alpha$ fulfills
	 $
		\Prb\big[\, \mathfrak{T}_{ a + \delta,  b - \delta }(G) \geq q_\alpha \,\big] \leq \alpha
	 $.
	Then the global relevance test given in \eqref{eq:GlobalRelevanceSCoRESI} with quantile $q = q_\alpha$ satisfies
	\begin{itemize}
			\item[(a)]  {\rm \textbf{Case} $\boldsymbol\delta \boldsymbol= \mathbf{0}$\textbf{:}}
				If
				{\rm\textbf{(A2)}} and {\rm\textbf{(A3)}} are satisfied, then
				\begin{equation*}
					\begin{split}
						\limsup_{N\rightarrow\infty}&\,\,
						\mathbb{P}^*\big[\,
							 	\mathbf{H}_0 \text{ is rejected }
							  \,\big]
							  \leq \alpha\,.
					\end{split}
				\end{equation*}
				If {\rm\textbf{(A4)}} is satisfied, then
				\begin{equation*}
					\begin{split}
						\liminf_{N\rightarrow\infty}&\,\,
						\mathbb{P}_*\big[\,
								\mathbf{H}_{0} \text{ is rejected }
							\,\big]
							\geq
							\Prb\Big[\, 
								\mathfrak{T}_{a+\delta, b-\delta}(G) > q_\alpha
							\,\Big]\,.
					\end{split}
				\end{equation*}
		\item[(b)] {\rm \textbf{Case} $\boldsymbol\delta \boldsymbol< \mathbf{0}$\textbf{:}}
$						\lim_{N\rightarrow\infty}\,\,
							\mathbb{P}^*\big[\,
								\mathbf{H}_{0} \text{ is rejected }
							\,\big]
							= 0$.
		\item[(c)]  {\rm \textbf{Case} $\boldsymbol\delta \boldsymbol> \mathbf{0}$\textbf{:}}
						$\lim_{N\rightarrow\infty}\,\,
						\mathbb{P}_*\big[\,
							\mathbf{H}_{0} \text{ is rejected }
						\,\big]
							= 1$\,.
	\end{itemize}
\end{theorem}
\begin{remark}
	If $\mathfrak{u}^{+}_{a + \delta}\cup\mathfrak{u}^{-}_{b - \delta} = \emptyset$, then all $q\in\mathbb{R}$ satisfy
	$ \Prb\big[\, \mathfrak{T}_{ a + \delta,  b - \delta }(G) \geq q \,\big] = 0 \leq \alpha $. However, 
	our construction implies
	$ \mathfrak{u}^{+}_{a + \delta} \cup \mathfrak{u}^{-}_{b - \delta} \neq\emptyset$.
	This follows directly
	from the definition of $\delta$ and the definition of the generalized
	preimage because $\delta$ satisfies that
	$a(s)+ \delta \geq \mu(s)$ and $b(s) - \delta \leq \mu(s)$ for all $s\in S$ and by construction
	either $\mathfrak{u}^{+}_{a + \delta}\neq \emptyset$ or $\mathfrak{u}^{-}_{b - \delta} \neq \emptyset$.
\end{remark}

\subsection{A Local Relevance Test with Strong FWER Control}

In the last section we saw that the global relevance hypothesis can be rejected at level $\alpha$
if certain estimated excursion sets are empty. In applications one is usually not only interested in
finding evidence for $\mu$ not being somewhere within a given envelop defined by the intervals $[b(s), a(s)]$, $s\in S$,
but wants to localize where $\mu(s) \notin [b(s), a(s)]$. Translated into the language of hypothesis testing this is
a multiple hypothesis test for the alternatives
\begin{equation*}
\begin{split}
	\mathbf{H}_{0,s}\mathbf{:}~ ~ \mu(s) \in \big[ b(s), a(s) \big]
	~ ~ ~ \text{ vs. } ~ ~ ~
	\mathbf{H}_{1,s}\mathbf{:}~ ~ \mu(s) \notin \big[ b(s), a(s) \big] \,,
\end{split}
\end{equation*} 
which controls the FWER in the strong sense at level $\alpha$. We will call such a test a local relevance test.
The idea is to use the membership in the excursion set for a decision on $\mathbf{H}_{0,s}$, i.e., 
\begin{equation}\label{eq:LocalRelevanceSCoRES}
	\text{\textbf{Reject} } \mathbf{H}_{0,s} \quad\Longleftrightarrow\quad s \in \hat{\cU}_{a + q\tau_N\sigma}
							~~\vee~~         s \in \hat{\cL}_{b - q\tau_N\sigma}\,.
\end{equation}
As the set of true null hypotheses is
$
	\mathcal{H}_0 = S \setminus (\, \cU_{a} \cup \cL_{b} \,)
$,
we immediately can show that a test defined by \eqref{eq:LocalRelevanceSCoRES} satisfies
	\begin{equation}\label{eqFWERlocTest}
	\begin{split}
		\mathbb{P}^*\big[\, \exists s \in \mathcal{H}_0:~ s \in \hat{\mathcal{H}}_1 \,\big]
		&= 1 -  \mathbb{P}_*\Big[\, \mathcal{H}_0 \subseteq S \setminus \big(\, 
					\hat{\cU}_{a + q\tau_N\sigma}
					~\cup~
					\hat{\cL}_{b - q\tau_N\sigma} \,\big)
				\,\Big]\\
		&\leq 1 -  \mathbb{P}_*\Big[\,
						\hat{\cU}_{a + q\tau_N\sigma} \subseteq \cU_{a}
					~\wedge~
						\hat{\cL}_{b - q\tau_N\sigma} \subseteq \cL_{b}
					 \,\Big]\,.
	\end{split}
	\end{equation}
Thus, we can asymptotically control the FWER using Corollary \ref{cor:Extraction} by choosing $q$ appropriately.
The only problem is --as in the global test case -- that in general
$ \mathfrak{u}^{+}_{a } \cup \mathfrak{u}^{-}_{b}$ can be the empty set which implies that the r.h.s of \eqref{eqFWERlocTest}
is asymptotically equal to $0$ for all $q\in\mathbb{R}$.
Therefore, inspired by the global relevance case and \cite{Dette:2020functional}, we  find the smallest
$\delta > 0$ such that $ \mathfrak{u}^{+}_{a + \delta } \cup \mathfrak{u}^{-}_{b - \delta} \neq \emptyset $,
yet $\cU_{a} = \cU_{a+\delta}$ and $\cL_{b} = \cL_{b-\delta}$ to make $q$ unique.
\begin{theorem}\label{thm:RelTubeTest}
	 Let $\alpha \in (0,1)$, $a,b\in\cF(S)$, $\delta = \min\{ \delta_a, \delta_b \}$ with
	 \begin{equation}
	 	\delta_a = \sup_{ s \in S } \big\vert \mu(s) - a(s) \big\vert\,,\quad\quad\quad
	 	\delta_b = \sup_{ s \in S } \big\vert b(s) - \mu(s)\big\vert\,,
	 \end{equation}
	 $\cA = \big\{ a + \delta \big\}$  and  $\cB =  \big\{ b - \delta \big\}$. Assume that {\rm\textbf{(A1)}} is satisfied 
	 and $q_\alpha$ fulfills
	 $
		\Prb\big[\, \mathfrak{T}_{ a + \delta,  b - \delta }(G) \geq q_\alpha \,\big] \leq \alpha
	 $, if $\mathfrak{u}^{+}_{a + \delta} \cup \mathfrak{u}^{-}_{b - \delta} \neq \emptyset$, 
	and $q_\alpha = 0$ else.
	Then the local relevance test given by \eqref{eq:LocalRelevanceSCoRES} with quantile $q = q_\alpha$ satisfies
	\begin{itemize}
			\item[(a)]
				Assume {\rm\textbf{(A2)}} and {\rm\textbf{(A3)}}, then
				\begin{equation*}
					\begin{split}
						\limsup_{N\rightarrow\infty}&\,\,
										\mathbb{P}^*\big[\, \exists s \in \cH_0:~
																s\in \hat{\cH}_1
													\,\big]
													\leq \alpha\,.
				\end{split}
				\end{equation*}
				Assume ${\rm\textbf{(A4)}}$, then
				\begin{equation*}
				\begin{split}
						\liminf_{N\rightarrow\infty}&\,\,
										\mathbb{P}_*\big[\, \forall s \in \hat{\cH}_1:~
															s\in \cH_1
													\,\big]
													\geq 1-\alpha\,.
					\end{split}
				\end{equation*}
		\item[(b)]
			{\rm \textbf{Case} $\boldsymbol\delta \boldsymbol= \mathbf{0}$\textbf{:}} If {\rm\textbf{(A4)}}
			and either $\cH_0 = S$ or
				$\inf_{s\in S} \big(\, a(s) - b(s) \,\big) \geq M > 0$ hold true. Then
						\begin{equation*}
						\begin{split}
							\liminf_{N\rightarrow\infty}
							\mathbb{P}_*\big[\, \exists s \in \cH_0:~ s\in \hat{\cH}_1 \,\big]
							 \geq \Prb\Big[\, \mathfrak{T}_{a + \delta, b - \delta^+}(G) > q_\alpha \,\Big]\,.
						\end{split}
						\end{equation*}
			Define 
			$\cH_1^{\eta_N} = \cL_{b^--\eta_N} \cup \cU_{b^++\eta_N} $. If $\eta_N\xrightarrow{N\rightarrow\infty} 0 $ and $\tau_N^{-1}\eta_N \xrightarrow{N\rightarrow\infty} \infty$, then
			\begin{equation*}
			\begin{split}
							&\liminf_{N\rightarrow\infty}
							\mathbb{P}_*\big[\, \forall s \in \cH_1^{\eta_N}:~
													s\in \hat{\cH}_1
													\,\big] = 1\,.
			\end{split}
			\end{equation*}
		\item[(d)] {\rm \textbf{Case} $\boldsymbol\delta \boldsymbol > \mathbf{0}$\textbf{:}}
			Then
			\begin{equation*}
			\begin{split}
				\lim_{N\rightarrow\infty}
				\mathbb{P}^*\big[\, \exists s \in \cH_0:~
										s \in \hat{\cH}_1 \,\big]
										= 0\,,
				~~~~
				\lim_{N\rightarrow\infty}\mathbb{P}_*\big[\, \forall s \in \cH_1:~
										s \in \hat{\cH}_1 \,\big]
										= 1\,.
			\end{split}
			\end{equation*}
	\end{itemize}
\end{theorem}
\begin{remark}
	One sided hypothesis tests can be obtained by setting either $a(s) = \infty $ or
	$b(s) = -\infty$ for all $s\in S$ for all $s\in S$. 
	The two-sided point hypotheses for a single $b\in\mathcal{F}(S)$
	is the special case where $a = b$. Theorem \ref{thm:RelTubeTest} shows that our test is
	asymptotically a consistent	test that controls the FWER at level $\alpha$. This test
	is uniformly more powerful than the standard asymptotic single-step test
	which uses the quantiles of $\max_{s \in S} \vert G(s) \vert$.
\end{remark}
\begin{remark}
	The local relevance test also yields a test for the global relevance hypothesis.
	Interestingly, the global test is not uniformly more powerful than the local test in
	detecting departures from the global null.
	The reason is that the tests from Theorem \ref{thm:Dette_Generalized} and
	\ref{thm:RelTubeTest} only differ in the definition of the critical sets over which the maxima of
	$\pm G$ are taken. In Figure \ref{fig:Comparison_DetteVsCoPE} we illustrate this
	under the assumption that $G$ is stationary.
\end{remark}

\begin{figure}[t!]
			\includegraphics[width=0.49\textwidth]{\figurepath 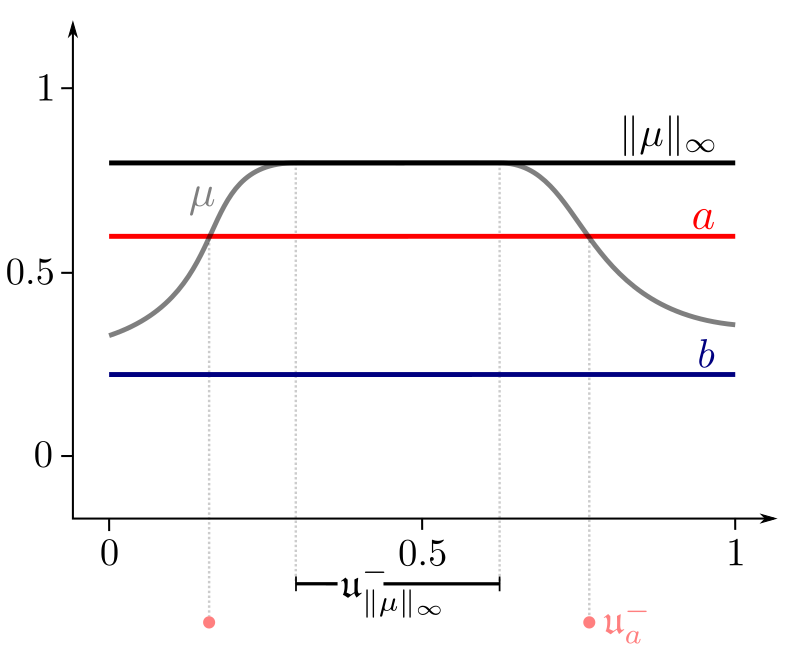}
			\includegraphics[width=0.49\textwidth]{\figurepath 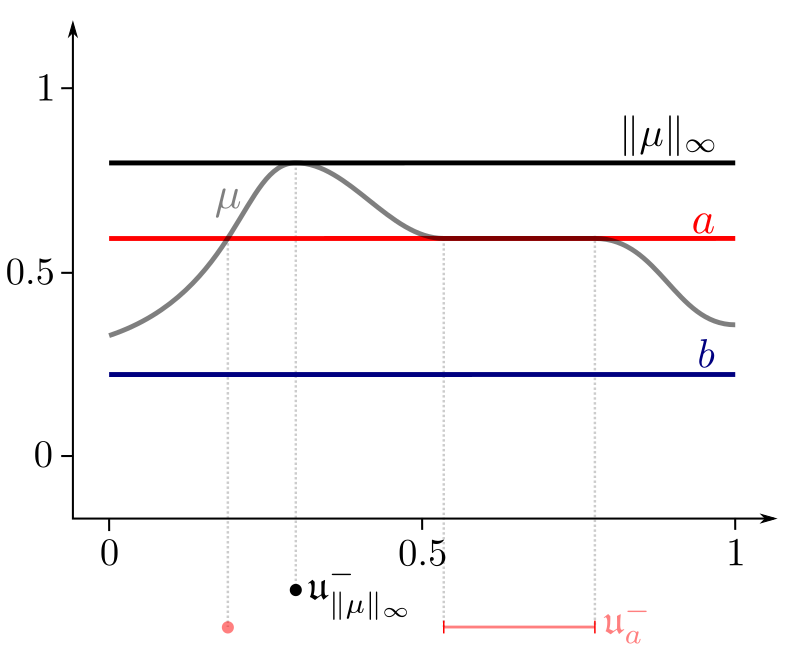}
		\caption{ Illustration of the critical sets $\mathfrak{u}^{-}_{a}$ and
				  $\mathfrak{u}^{-}_{\Vert \mu \Vert_\infty}$ of Theorem \ref{thm:Dette_Generalized} and
				  \ref{thm:RelTubeTest} respectively.
				  Under stationarity of $G$ it holds that the smaller the Lebesgue
				  volume of the critical set the smaller is the threshold $q$, i.e., the
				  power to reject the null hypothesis from Theorem \ref{thm:Dette_Generalized}
				  is higher.
				  \textit{Left:} the local relevance test has a higher power.
				  \text{Right:} the global relevance test has a higher power.
				   }\label{fig:Comparison_DetteVsCoPE}
\end{figure}

\subsection{Equivalence Tests}\label{scn:ETest}
In \cite{Dette:2021bio} a global equivalence test for data in $C(S)$, $S=[0,1]$, is proposed.
In this section we show that again Corollary \ref{cor:Extraction} can be used to weaken the assumptions
of their testing strategy.

Assume $a,b\in\cF(S)$ such that $b(s) < a(s)$ for all $s\in S$ are given. The statistical hypotheses of an \textit{equivalence test} are the hypotheses
\begin{equation*}
\begin{split}
	\mathbf{H}_{0}\mathbf{:}~ ~ \exists s \in S:~ \mu(s) \notin \big( b(s), a(s) \big)
	 ~ \text{ vs. } ~ 
\mathbf{H}_{1}\mathbf{:}~ ~\forall s \in S:~ \mu(s) \in \big( b(s), a(s) \big)\,.
\end{split}
\end{equation*}
Thus, the natural test statistic based on excursion sets is
\begin{equation}\label{eq:GlobalEquivalenceSCoRES}
	\text{\textbf{Reject} } \mathbf{H}_{0} \quad\Longleftrightarrow\quad \hat{\cU}_{a + q\tau_N\sigma} = \emptyset
							~~\wedge~~         \hat{\cL}_{-b - q\tau_N\sigma} = \emptyset\,.
\end{equation}
Using this we can generalize the test procedure proposed in \cite{Dette:2021bio} similar as in Theorem \ref{thm:Dette_Generalized}.
\begin{theorem}\label{thm:Equiv_2sides}
	 Let $\alpha \in (0,1)$, $a,b\in\cF(S)$ such that $\inf_{s\in S} \vert\, a(s) - b(s) \,\vert > 0$, $\delta$
	 be defined as in Theorem \ref{thm:Dette_Generalized},
	 $\cA = \big\{ a + \delta \big\}$  and  $\cB =  \big\{ b - \delta \big\}$.
	 Assume that {\rm\textbf{(A1)}} is satisfied  and $q_\alpha$ fulfills
	 $
		\Prb\big[\, \mathfrak{T}_{ a + \delta,  b - \delta }(G) \leq q_\alpha \,\big] \leq \alpha
	 $.
	Then the equivalence test given in \eqref{eq:GlobalEquivalenceSCoRES} with quantile $q = q_\alpha$ satisfies
	\begin{itemize}
			\item[(a)]
				{\rm \textbf{Case} $\boldsymbol\delta \boldsymbol = \mathbf{0}$\textbf{:}} If {\rm\textbf{(A4)}} is satisfied, then
				\begin{equation*}
					\begin{split}
						\limsup_{N\rightarrow\infty}&\,\,
							\mathbb{P}^*\big[\,
								\mathbf{H}_{0} \text{ is rejected }
							\,\big]
							\leq \alpha\,.
					\end{split}
				\end{equation*}
				If {\rm\textbf{(A2)}} and {\rm\textbf{(A3)}} are satisfied,	then
				\begin{equation*}
					\begin{split}
						\liminf_{N\rightarrow\infty}&\,\,
							\mathbb{P}_*\big[\,
									\mathbf{H}_{0} \text{ is rejected }
								\,\big]
							\geq
							\Prb\big[\,
									\mathfrak{T}_{a+\delta, b-\delta}( G ) < q_\alpha
								\,\big]\,.
					\end{split}
				\end{equation*}
		\item[(b)] {\rm \textbf{Case} $\boldsymbol\delta \boldsymbol> \mathbf{0}$\textbf{:}}
$						\lim_{N\rightarrow\infty}\,\,
							\mathbb{P}^*\big[\,
								\mathbf{H}_{0} \text{ is rejected }
							\,\big]
							= 0$.
		\item[(c)]  {\rm \textbf{Case} $\boldsymbol\delta \boldsymbol< \mathbf{0}$\textbf{:}}
						$\lim_{N\rightarrow\infty}\,\,
						\mathbb{P}_*\big[\,
							\mathbf{H}_{0} \text{ is rejected }
						\,\big]
							= 1$\,.
	\end{itemize}
\end{theorem}
\begin{remark}
	If we assume that $\mu,\hat\mu_N, a,b \in C([0,1])$ and $\sigma(s) = 1$, $s\in S$,
	then simple algebra shows that the above testing
	strategy is identical to the
	testing strategy from \cite{Dette:2021bio},
	because
	$
		 \hat{\cU}_{a + q\tau_N\sigma} = \emptyset
	$
	and
	$
	\hat{\cL}_{-b - q\tau_N\sigma} = \emptyset\
	$
	is equivalent to
	\begin{equation*}
		\max\left(\, \sup_{s\in S} \hat\mu_N(s) - a(s),~ \sup_{s\in S} b(s) - \hat\mu_N(s) \,\right)
		  \leq  \tau_N q_\alpha
	\end{equation*}
	and their theoretical quantile is identical to $q_\alpha$.
	The only difference is that in the definition of the quantile they use "$<$",
	which is irrelevant as in their model the cumulative distribution function of
	$\mathfrak{T}_{a+\delta, b-\delta}( G ) $ is continuous. 
\end{remark}
\begin{remark}
 Using Remark \ref{rmk:WeakerUpperBound} and comparing the assumptions
 of Theorem \ref{thm:Dette_Generalized} and \ref{thm:Equiv_2sides}
 we can conclude that the assumptions to get an equivalence test
 that asymptotically controls the FWER requires much weaker
 assumptions than in a relevance test, i.e., $\hat\mu_N$ needs to satisfy a ULT only on $\mu^{-1}_{\{a+\delta\}} \cup \mu^{-1}_{\{b-\delta\}}$, while Theorem \ref{thm:Dette_Generalized} requires a ULT on an
 $\tilde\eta$-thickening of $\mu^{-1}_{\{a-\delta\}} \cup \mu^{-1}_{\{b+\delta\}}$. 
\end{remark}

\subsection{Local Equivalence Tests}\label{scn:LocET}
We finally discuss a local version of the equivalence hypothesis, i.e.,
\begin{equation*}
\begin{split}
	\mathbf{H}_{0,s}\mathbf{:}~ ~ \mu(s) \notin \big( b(s), a(s) \big)
		\quad\text{vs.}\quad
	\mathbf{H}_{1,s}\mathbf{:}~ ~ \mu(s) \in \big( b(s), a(s) \big)\,,
\end{split}
\end{equation*}
for $a,b\in \cF(S)$ such that $\inf_{s\in S} a(s) - b(s) > 0$. A test based on excursion sets is the following:
\begin{equation}\label{eq:LocalEquivalenceSCoRES}
	\text{\textbf{Reject} } \mathbf{H}_{0,s} \quad\Longleftrightarrow\quad s \in S\setminus\hat{\cU}_{a - q\tau_N\sigma}
							\cap S\setminus\hat{\cL}_{b + q\tau_N\sigma}\,.
\end{equation}
We can proof a similar Theorem as Theorem \ref{thm:Dette_Generalized} for the local equivalence test.
\begin{theorem}\label{thm:lequivTest}
	 Let $\alpha \in (0,1)$, $a,b\in\cF(S)$, $\delta$ be defined as in Theorem \ref{thm:RelTubeTest},
	 $\cA = \big\{ a - \delta \big\}$  and  $\cB =  \big\{ b + \delta \big\}$.
	 Assume that {\rm\textbf{(A1)}} is satisfied 
	 and $q_\alpha$ fulfills
	 $
		\Prb\big[\, \mathfrak{T}_{ b - \delta,  a + \delta }(G) \geq q_\alpha \,\big] \leq \alpha
	 $, if $\mathfrak{u}^{+}_{b - \delta} \cup \mathfrak{u}^{-}_{a + \delta} \neq \emptyset$, 
	and $q_\alpha = 0$ else.
	Then the local equivalence test given by \eqref{eq:LocalEquivalenceSCoRES} with quantile $q = q_\alpha$ satisfies
	\begin{itemize}
			\item[(a)]
				Assume {\rm\textbf{(A2)}} and {\rm\textbf{(A3)}}, then
				\begin{equation*}
					\begin{split}
						\limsup_{N\rightarrow\infty}&\,\,
										\mathbb{P}^*\big[\, \exists s \in \cH_0:~
																s\in \hat{\cH}_1
													\,\big]
													\leq \alpha\,.
				\end{split}
				\end{equation*}
				Assume ${\rm\textbf{(A4)}}$, then
				\begin{equation*}
				\begin{split}
						\liminf_{N\rightarrow\infty}&\,\,
										\mathbb{P}_*\big[\, \forall s \in \hat{\cH}_1:~
															s\in \cH_1
													\,\big]
													\geq 1-\alpha\,.
					\end{split}
				\end{equation*}
		\item[(b)]
			{\rm \textbf{Case} $\boldsymbol\delta \boldsymbol= \mathbf{0}$\textbf{:}} If {\rm\textbf{(A4)}}
			and either $\cH_0 = S$ or
				$\inf_{s\in S} \big(\, a(s) - b(s) \,\big) \geq M > 0$ hold true. Then
						\begin{equation*}
						\begin{split}
							\liminf_{N\rightarrow\infty}
							\mathbb{P}_*\big[\, \exists s \in \cH_0:~ s\in \hat{\cH}_1 \,\big]
							 \geq \Prb\Big[\, \mathfrak{T}_{a + \delta, b - \delta}(G) > q_\alpha \,\Big]\,.
						\end{split}
						\end{equation*}
			Define 
			$\cH_1^{\eta_N} = \cL_{b^--\eta_N} \cup \cU_{b^++\eta_N} $. If $\eta_N\xrightarrow{N\rightarrow\infty} 0 $ and $\tau_N^{-1}\eta_N \xrightarrow{N\rightarrow\infty} \infty$, then
			\begin{equation*}
			\begin{split}
							&\liminf_{N\rightarrow\infty}
							\mathbb{P}_*\big[\, \forall s \in \cH_1^{\eta_N}:~
													s\in \hat{\cH}_1
													\,\big] = 1\,.
			\end{split}
			\end{equation*}
		\item[(d)] {\rm \textbf{Case} $\boldsymbol\delta \boldsymbol > \mathbf{0}$\textbf{:}}
			Then
			\begin{equation*}
			\begin{split}
				\lim_{N\rightarrow\infty}
				\mathbb{P}^*\big[\, \exists s \in \cH_0:~
										s \in \hat{\cH}_1 \,\big]
										= 0\,,
				~~~~
				\lim_{N\rightarrow\infty}\mathbb{P}_*\big[\, \forall s \in \cH_1:~
										s \in \hat{\cH}_1 \,\big]
										= 1\,.
			\end{split}
			\end{equation*}
	\end{itemize}
\end{theorem}
\begin{remark}\label{rmk:SCoPESPCII}
	A standard approach for an equivalence test of a parameter $\mu\in \mathbb{R}$ is
	the \textit{Principle of Confidence Interval Inclusion} (PCII) \cite[Chapter 3.1]{Wellek:2010EqvTesting}.
	Let $X$ denote the observations and $C_\pm(X; \alpha)$ the one-sided $(1-\alpha)$-confidence
	bounds for $\mu$, i.e.,
	$$
		\mathbb{P}\big[\, \mu \in \big(\! -\infty,\, C_-(X; \alpha) \big) \,\big]
		 = \mathbb{P}\big[\, \mu \in \big( C_+(X; \alpha ), \infty \big) \,\big] 
		 = \alpha\,.
	$$	
	Let $b < a$.  The alternatives
	$
	\mathbf{H}_0: \mu \notin (b, a)
	$
	vs.
	$
	 \mathbf{H}_1: \mu \in (b, a)
	$
	 can be tested at significance level $\alpha$ by rejecting $\mathbf{H}_0$ if
	$
		\big( C_-(X; \alpha),\, C_+(X; \alpha) \big) \subseteq ( b^-, b^+ )\,.
	$
	By construction the interval $\big( C_-(X; \alpha), C_+(X; \alpha) \big)$ constitutes a
	$(1-2\alpha)$-CI for $\mu$. This explains the name of the principle.
	The first conservative equivalence test was derived in the above fashion
	in  \cite{Westlake:1972} using a $(1-\alpha)$-CI.
	We argue now that the conservativeness has been a relic of using a
	CI instead of SCoRE sets. To see this, assume $\hat\mu_N$ satisfies
	$\tau_N^{-1}( \hat\mu_N - \mu ) / \sigma \rightsquigarrow G$
	and define
	$
		C_\pm(X; \alpha) = \hat{\mu}_N \pm q_\alpha^\pm\tau_N\sigma
	$
	with $q^\pm_\alpha$ satisfying
	$
		\mathbb{P}\big[\, \mp G \leq  q^\pm_\alpha \,\big] = 1-\alpha
	$.
	The PCII rejects $\mathbf{H}_0$ if
	$$
		\hat{\mu}_N \in \big(\, b  + \tau_N q^-_\alpha\sigma,\,  a  - \tau_N q^+_\alpha\sigma \,\big)\,.
	$$	
	The connection to the local equivalence test based on SCoRE sets is as follows.
	It can be easily verified that $q_\alpha$ from Theorem \ref{thm:lequivTest} satisfies
	\begin{equation*}
		q_\alpha = \begin{cases}
								q_\alpha^+\,, &\text{ if }~ ~ ~
											\vert\, \mu - a \,\vert \leq \vert\, \mu - b\,\vert\\
								q_\alpha^-\,, &\text{ else } 
					 \end{cases}
	\end{equation*}
	and that the local equivalence test based on SCoRE sets rejects $\mathbf{H}_0$ if
	$$
		\hat{\mu}_N \in \big(\, b  + \tau_N q_\alpha\sigma,\,  a  - \tau_N q_\alpha\sigma \,\big)\,,
	$$
	which is exactly the test derived from the PCII if $G$ has a symmetric distribution.
\end{remark}

\section{An Application to Scheff\'e Type Inference For Multiple Linear Regression}\label{scn:Scheffe}
In order to highlight potential benefits from statistical inference using SCoRE sets, we compare them
in this section with Scheff\'e's simultaneous testing for all contrasts in multiple linear regression models.
In order to explain the difference more concretely, we keep the
considered probabilistic model simple by restricting ourselves to homoscedastic Gaussian errors. 

Let $ (X_N)_{N \in \mathbb{N}} \in \mathbb{R}^{N \times K} $ be a sequence
of design matrices of rank $ K < N $,  $(\tau_N)_{N\in \mathbb{N}}$ be a
sequence satisfying
$ \lim_{N\rightarrow\infty} \tau_{N}^{-2} (X_N^TX_N)^{-1} = \mathfrak{X} \in \mathbb{R}^{K \times K}$
with $\mathfrak{X}$ invertible and
the observations $\boldsymbol{y}_N$ are generated from a linear model, i.e.,
$\boldsymbol{y}_N \sim \mathcal{N}\left( X_N\boldsymbol{\beta}, \xi^2 I_{N \times N }\right)$
with $\xi \in \mathbb{R}_{>0}$.
Recall that $\hat{\boldsymbol{\beta}}_N = (X_N^TX_N)^{-1}X^T_N\boldsymbol{y}_N$,
$N \in \mathbb{N}$, is the \textit{uniformly minimum-variance unbiased estimator} (UMVU)
of $\boldsymbol{\beta}$.
Usually not $\boldsymbol{\beta}$, but linear contrasts
$\mu(\boldsymbol{a}) = \boldsymbol{a}^T\boldsymbol{\beta}$, $\boldsymbol{a} \in \mathbb{R}^K$,
are of interest.
Interpreting $ \boldsymbol{a} \in \mathbb{R}^K $ as the parameter set,
we define a stochastic process indexed in $\mathbb{R}^K$ and its asymptotic variance by
$
	\hat\mu_N(\boldsymbol{a}) = \boldsymbol{a}^T \hat{\boldsymbol{\beta}}_N
$, 
$
	\sigma^2(\boldsymbol{a}) = \xi^2\boldsymbol{a}^T\mathfrak{X}\boldsymbol{a}
$.
Since all involved quantities are Gaussian and continuous in $\boldsymbol{a}$, we obtain
\begin{equation}\label{eq:ContrastULT}
	 G_N( \boldsymbol{a} ) = \frac{ \hat\mu_N(\boldsymbol{a}) - \mu(\boldsymbol{a}) }{ \tau_N \xi\sqrt{\boldsymbol{a}^T\mathfrak{X}\boldsymbol{a}} } \rightsquigarrow G(\boldsymbol{a})
\end{equation}
weakly in $C\big( \mathbb{R}^K \big)$, where $G$ is the zero-mean Gaussian process with covariance function
\begin{equation*}
	\mathfrak{c}(\boldsymbol{a}, \boldsymbol{b}) = \frac{\boldsymbol{a}^T\mathfrak{X}\boldsymbol{b}}{ \sqrt{\boldsymbol{a}^T\mathfrak{X}\boldsymbol{a}}\sqrt{\boldsymbol{b}^T\mathfrak{X}\boldsymbol{b}} }\,,~~~ \text{ for }\boldsymbol{a},\boldsymbol{b}\in \mathbb{R}^K\,.
\end{equation*}
Since $G_N(\boldsymbol{a})= G_N(\tilde{\boldsymbol{a}})$ and
$G(\boldsymbol{a})= G(\tilde{\boldsymbol{a}})$ with
$\tilde{\boldsymbol{a}} = \boldsymbol{a} / \Vert \boldsymbol{a} \Vert$,
the domain of the processes can be assumed to be the compact space
$\mathbb{S}^{K-1} = \{ \boldsymbol{a} \in \mathbb{R}^K ~\vert~ \Vert \boldsymbol{a} \Vert = 1 \}$
instead of $\mathbb{R}^K$.

A standard task in multiple linear regression is finding
contrasts $\boldsymbol{b} \in \mathbb{S}^{K-1}$ using the observations $\boldsymbol{y}_N$
such that with high probability $\boldsymbol{b} \in \{ \boldsymbol{a} \in \mathbb{S}^{K-1}~\vert~\boldsymbol{a}^T\boldsymbol{\beta} \neq 0 \} $.
This can be achieved for example using SCBs \cite{Scheffe:1953}.
The asymptotic analogue of Scheff\'e's ${(1-\alpha)}$-SCBs for contrasts
are given by the intervals with endpoints
\begin{equation}\label{eq:ScheffeBand}
	\hat{\mu}(\boldsymbol{a}) \pm q_{1-\alpha, K} \sigma(\boldsymbol{a})
		= \boldsymbol{a}^T \hat{\boldsymbol{\beta}}_N
				\pm \tau_Nq_{1-\alpha, K}\xi \sqrt{\boldsymbol{a}^T\mathfrak{X}\boldsymbol{a}}\,,
\end{equation}
where $q_{1-\alpha, K}^2$ is the $(1-\alpha)$-quantile of a $\chi^2_K$-distributed random variable
\cite[eq. (8.71)]{Rencher:2008linear}. Based on this and using Theorem 8.5 from \cite{Rencher:2008linear}
the asymptotic version of Scheff\'e's test rejects the null hypothesis of
$\boldsymbol{a}^T\boldsymbol{\beta} = 0$ for all $\boldsymbol{a}\in\mathbb{S}^{K-1}$
at significance level $\alpha$, if
\begin{equation*}
	\xi^{-2}\big( \hat{\boldsymbol{\beta}}_N^T \mathfrak{X}^T\mathfrak{X}\hat{\boldsymbol{\beta}}_N \big)^2 > \tau_N^2q_{1-\alpha, K}^2\,,
\end{equation*}
which is equivalent to the existence of $\boldsymbol{a}\in\mathbb{S}^{K-1}$
such that zero is not contained in the interval given by \eqref{eq:ScheffeBand}.
Our Corollary \ref{cor:SCB_CoPE} shows that this SCB contains
more information than allowing us to perform a valid hypothesis test for
$\boldsymbol{a}^T\boldsymbol{\beta} = 0$ for all $\boldsymbol{a}\in\mathbb{S}^{K-1}$.
We actually know that
\begin{equation*}
		\Prb\left[ \forall c  \in \mathcal{F}\big( \mathbb{R}^K \big):~ \hat{\cL}_{c - q_{1-\alpha, K}K\sigma} \subseteq \cL_c ~\wedge~
		 \hat{\cU}_{c + q_{1-\alpha, K}K\sigma} \subseteq \cU_c  \,\right]
 = 1-\alpha\,.
\end{equation*}
This implies for the function $c$ satisfying $c(\boldsymbol{a}) = 0$ for all $\boldsymbol{a}\in\mathbb{S}^{K-1}$
that all contrasts $\boldsymbol{b} \in \mathbb{S}^{K-1}$ contained in either
of the two sets
\begin{equation*}
\begin{split}
  	 \hat{\cL}_{-\tau_Nq_{1-\alpha, K}\sigma}
  	 		&= \Big\{\, \boldsymbol{a} \in \mathbb{S}^{K-1}:~ \boldsymbol{a}^T\hat{\boldsymbol{\beta}}
	 				< -\tau_Nq_{1-\alpha,K}\xi\sqrt{\boldsymbol{a}^T\mathfrak{X}\boldsymbol{a}} \,\Big\}\,,\\
 \hat{\cU}_{\tau_Nq_{1-\alpha, K}\sigma}
 			&= \Big\{\, \boldsymbol{a} \in \mathbb{S}^{K-1}:~ \boldsymbol{a}^T\hat{\boldsymbol{\beta}}
	 				> \tau_Nq_{1-\alpha,K}\xi\sqrt{\boldsymbol{a}^T\mathfrak{X}\boldsymbol{a}} \,\Big\}
\end{split}
\end{equation*}
are asymptotically with probability at least $1-\alpha$ correctly discovered to be non-zero contrasts, i.e.,
$\boldsymbol{b} \in \{ \boldsymbol{a}\in \mathbb{S}^{K-1}~\vert~ \boldsymbol{a}^T\boldsymbol{\beta} \neq 0 \} = \cL_c \cup \cU_c$. This result holds independently of the actual value of $\boldsymbol{\beta}$.
The drawback is low detection power since it allows to simultaneously construct
confidence subsets for any function $c\in\mathcal{F}(\mathbb{S}^{k-1})$.
The power can be improved by applying our Corollary
\ref{cor:MaxImproved} which gives upper and lower confidence
subsets solely for the function $c(\boldsymbol{a}) = 0$ for all $\boldsymbol{a}\in\mathbb{S}^{K-1}$.
\begin{corollary}\label{thm:ScheffeSSSCoPE_zero}
	Assume the multiple linear regression model depending on $N$ as defined above.
	Let $c(\boldsymbol{a}) = 0$ for all $\boldsymbol{a}\in \mathbb{S}^{K-1}$ and $q\in \mathbb{R}_{ \geq 0}$.
	Then
	\begin{equation*}
		\lim_{N \rightarrow \infty}
			\Prb \left[\, \hat{\cL}_{-\tau_N q \sigma } \subseteq  \cL_0 ~\wedge~ \hat{\cU}_{\tau_N q \sigma } \subseteq  \cU_0 \,\right]
		 	= \chi^2\big( q^2, K - 1 + \mathds{1}_{\boldsymbol{\beta}=0} \big)\,.
	\end{equation*}
	Here $\mathds{1}_{\boldsymbol{\beta}=0}$ is the indicator function,	i.e., one, if $\boldsymbol{\beta}=0$, and zero else, and
	$u \mapsto \chi^2(u, k)$ is the cumulative distribution function of
	a $ \chi^2_k $-distributed random variable.
\end{corollary}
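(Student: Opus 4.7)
The plan is to invoke Corollary~\ref{cor:MaxImproved} with $S = \mathbb{S}^{K-1}$, $\mu(\boldsymbol{a}) = \boldsymbol{a}^T\boldsymbol{\beta}$, and $c \equiv 0$, and then evaluate the resulting limiting probability by identifying it with a Gaussian quadratic form. Verification of the hypotheses is light: the sphere is compact and both $\mu$ and $c$ are continuous. The weak convergence \eqref{eq:ContrastULT}, restricted to the compact set $\mathbb{S}^{K-1}$, gives condition (i) of the $\mathcal{C}_{\tilde{\eta}}$-ULT in \textbf{(A1)}. For condition (ii) I apply the second Remark after Definition~\ref{def:SuffWeakConv} with $\tilde{Z}_N = \tau_N G_N$: on $\mathbb{S}^{K-1}\setminus\mu^{-1}_{\{0\}_{\tilde{\eta}}}$ one has $|\boldsymbol{a}^T\boldsymbol{\beta}|/\sigma(\boldsymbol{a}) \geq \tilde{\eta}/\mathfrak{O}$, and the required asymptotic tightness of $\inf_{\boldsymbol{a}} G_N(\boldsymbol{a})$ follows from the weak convergence of $G_N$ in $C(\mathbb{S}^{K-1})$. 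Assumption \textbf{(A2)} is trivial since both $G$ and $G_N$ have continuous sample paths on the compact sphere.

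Applying Corollary~\ref{cor:MaxImproved} then yields
\begin{equation*}
\lim_{N\to\infty}\mathbb{P}\!\left[\hat{\mathcal{L}}_{-\tau_N q\sigma}\subseteq\mathcal{L}_0 \,\wedge\, \hat{\mathcal{U}}_{\tau_N q\sigma}\subseteq\mathcal{U}_0\right] = \mathbb{P}\bigl[\mathfrak{T}_c(G) \prec q\bigr].
\end{equation*}
Since $\mu, c \in C(\mathbb{S}^{K-1})$, the generalized preimage collapses to $\mathfrak{u}^{-1}_{\{0\}} = \mu^{-1}(0) = \{\boldsymbol{a}\in\mathbb{S}^{K-1} : \boldsymbol{a}^T\boldsymbol{\beta} = 0\}$. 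The covariance structure in \eqref{eq:ContrastULT} identifies $G$ as $G(\boldsymbol{a}) = \boldsymbol{a}^T W/\sqrt{\boldsymbol{a}^T\mathfrak{X}\boldsymbol{a}}$ with $W \sim \mathcal{N}(0,\mathfrak{X})$, so by scale-invariance of the Rayleigh-type quotient,
\begin{equation*}
\mathfrak{T}_c(G)^2 = \sup_{\boldsymbol{a}\in\mathfrak{u}^{-1}_{\{0\}}} G(\boldsymbol{a})^2 = \sup_{\boldsymbol{a}\in V\setminus\{0\}}\frac{(\boldsymbol{a}^T W)^2}{\boldsymbol{a}^T\mathfrak{X}\boldsymbol{a}},
\end{equation*}
where $V = \{\boldsymbol{a}\in\mathbb{R}^K : \boldsymbol{a}^T\boldsymbol{\beta} = 0\}$, which equals $\mathbb{R}^K$ when $\boldsymbol{\beta}=0$ and has dimension $K-1$ otherwise.

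To finish, introduce a basis matrix $B\in\mathbb{R}^{K\times d}$ of $V$ with $d = K - 1 + \mathds{1}_{\boldsymbol{\beta}=0}$, substitute $\boldsymbol{a} = B\boldsymbol{u}$, and use the classical identity $\sup_{\boldsymbol{u}\neq 0}(\boldsymbol{u}^T B^T W)^2/(\boldsymbol{u}^T B^T\mathfrak{X}B\boldsymbol{u}) = (B^T W)^T(B^T\mathfrak{X}B)^{-1}(B^T W)$. Since $B^T W \sim \mathcal{N}(0, B^T\mathfrak{X}B)$, this quadratic form is $\chi^2_d$-distributed, so $\mathbb{P}[\mathfrak{T}_c(G) \prec q] = \chi^2(q^2, K - 1 + \mathds{1}_{\boldsymbol{\beta}=0})$; continuity of the $\chi^2$ law (with the convention that $\sup_{\emptyset} = -\infty$ handles the edge case $K=1$, $\boldsymbol{\beta}\neq 0$, where $V = \{0\}$ and $\chi^2_0$ is degenerate) turns $\prec$ into an equality. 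The only real obstacle is the bookkeeping around \textbf{(A1)}(ii); the rest is identification of a standard Gaussian quadratic form.
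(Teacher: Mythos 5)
Your proposal is correct and follows essentially the same route as the paper: verify \textbf{(A1)}--\textbf{(A4)} (trivial on the compact sphere with continuous $\mu$, $c$, $G_N$), apply the asymptotic SCoPE theorem to reduce the limit to $\mathbb{P}[\sup_{\boldsymbol{a}^T\boldsymbol{\beta}=0,\,\|\boldsymbol{a}\|=1}|G(\boldsymbol{a})|\leq q]$, and identify that supremum with the norm of a Gaussian projected onto a subspace of dimension $K-1+\mathds{1}_{\boldsymbol{\beta}=0}$. The only cosmetic difference is that you evaluate the constrained supremum via the generalized Rayleigh-quotient identity $\sup_{\boldsymbol{u}\neq 0}(\boldsymbol{u}^Tv)^2/(\boldsymbol{u}^TA\boldsymbol{u})=v^TA^{-1}v$, whereas the paper whitens by $\mathfrak{X}^{1/2}$ and invokes Lemma \ref{lemma:maxScalarProduct} with $l=0$; these are the same computation.
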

If $\boldsymbol{\beta}=0$ is true or we want to secure ourselves against this case,
the above corollary does not and should not allow to improve the detection power
compared to the asymptotic SCB approach based on Scheff\'e as
in this case the limit distribution agrees with the limit distribution used to construct the SCB.
However, by taking into account that the limit distribution depends on the true $\boldsymbol{\beta}$ as
shown in the above result, we can devise a more powerful strategy to detect non-zero contrasts
$\boldsymbol{a}\in\mathbb{R}^K$ for $\boldsymbol{\beta}$, while controlling the FWER at level $\alpha$.

To do so we use contrary to the SCBs the $\boldsymbol{\beta}\neq 0$ case to tune the quantile for detection.
If $q_{1-\alpha, K-1}^2$ is the $(1-\alpha)$-quantile of a $\chi^2_{K-1}$-distributed
random variable, then Corollary \ref{thm:ScheffeSSSCoPE_zero} yields
	\begin{equation*}
	\begin{split}
		&\lim_{N \rightarrow \infty}
			\Prb \left[\, 
			\hat{ \cL}_{-\tau_N q_{1-\alpha,K-1} \sigma } \subseteq  \cL_0
			 ~\wedge~
			 \hat{ \cU}_{\tau_N q_{1-\alpha,K-1} \sigma } \subseteq  \cU_0 \,\right]\\
		 	&=\begin{cases}
		 		1-\alpha\,, &\text{ if } \boldsymbol{\beta} \neq 0 \\
		 		 \chi^2\big( q_{1-\alpha, K-1}^2, K \big)\,, &\text{ if } \boldsymbol{\beta} = 0
		 	  \end{cases}\,.
	\end{split}
	\end{equation*}
This means that, if $\boldsymbol{\beta} \neq 0$, all as non-zero discovered contrasts $\boldsymbol{a}$, i.e.,
$\boldsymbol{a} \in \hat{\cL}_{-\tau_N q_{1-\alpha,K-1} \sigma } \cup \hat{\cU}_{\tau_N q_{1-\alpha,K-1} \sigma }$,
are asymptotically with probability ${1-\alpha}$ correctly identified to be non-zero.
On the other hand, if $\boldsymbol{\beta} = 0$, it holds that $\cL_0 = \cU_0 = \emptyset$.
Thus, asymptotically the probability to find (wrongly) a non-zero contrast
is $1-\chi^2\big( q_{1-\alpha,K-1}^2, K \big)>\alpha$. This is the quantifiable price to pay
for a higher power.
To illustrate it, assume $ {\alpha} = 0.05 $ and $k=4$. Then all discovered contrasts are with
probability $0.95$ correctly identified to be non-zero contrasts, if
the true $\boldsymbol{\beta} \neq 0$. In the worst case that $\boldsymbol{\beta} = 0$,
then the probability of wrongly discovering a non-zero contrast is
$1-\chi^2\big( q_{0.95, 3}^2, 4 \big)\approx 0.1$. As it is impossible to judge with certainty whether
$\boldsymbol{\beta} = 0$, the researcher can use the information from $\hat{\boldsymbol{\beta}}$ to judge how
risky it is to ignore the worst case scenario and communicate this in forms of confidence bands for $\boldsymbol{\beta}$ or alike.


\section{Discussion}\label{scn:discussion}
In this article we refined, extended and unified different statistical inference tools
for a target function $\mu$ from an estimator $\hat\mu_N$ which
control FWER-like criteria over a metric space $S$.
In particular, we demonstrated that CoPE sets \cite{Sommerfeld:2018CoPE},
SCBs and recently proposed relevance and equivalence tests for $C(S)$-valued data based on the supremum
norm, among others \cite{Dette:2020functional},
can all be derived from Theorem \ref{thm:MainSCoPES}.
Furthermore, our abstract viewpoint allowed us to weaken the assumptions of
the aforementioned methods and clarify some of their conceptual shortcomings,
for example, by changing the definition of the inclusion statement
from \cite{Sommerfeld:2018CoPE}.
We finish our current endeavor with a few remarks
and directions for future research.

	As, for example, in nonparametric statistics $\hat\mu_N$ often does not satisfy a ULT,
	it is worthwhile to explore assumptions different to \textbf{(A1)}-\textbf{(A3)}  which enable us
	to compute the limit distributions given in the SCoRE set Metatheorem from Appendix
	\ref{scn:MetaTheorem}. Especially, it might be possible to derive
	the testing strategy from \cite{Bucher2021deviations} MARKER (maybe add more) from our SCoRE sets framework as
	it is very similar to the testing problems discussed in this work,
	yet their estimator does not satisfy a ULT. Additionally, we believe that the SCoRE set metatheorem and
	the change of the inclusion statement might allow us to remove Assumption 2.2.3
	from \cite{Maullin:2022} and extend their construction of confidence regions for
	intersections and unions of excursion sets of several estimators to more complicated combinations
	of excursion sets such as the symmetric difference.
	Furthermore, it would be of interest to extend our current theory to develop honest and adaptive SCoRE sets
	which would generalize the concept of honest and adaptive simultaneous confidence bands,
	among others, \cite{li:1989honest,Gine:2010confidence,Hoffmann:2011},
	or to relax the relatively strict FWER like requirement for  the inclusion statements to hold
	with a certain probability in our SCORE sets to weaker assertions resembling FDR
	(e.g.,\cite{Benjamini:1995controlling}) or $k$-FWER (e.g., \cite{Lehmann:2005})
	multiple comparison	corrections.

	Finally, as one of our main contributions was to shed light on the connections between confidence regions
	for excursion sets and hypothesis testing, we want to remedy a misconception from \cite{Bowring:2019} which
	motivates CoPE sets as a solution to the paradox caused by the fallacy of the null hypothesis
	\cite{Rozeboom:1960}.
	They write \textit{"[...] the paradox is that
	while statistical models conventionally assume mean-zero
	noise, in reality all sources of noise will never cancel, and therefore improvements
	in experimental design will eventually lead to statistically significant
	results. Thus, the null hypothesis will, eventually, always be rejected
	\cite{Meehl:1967}. [...]"} and later they write \textit{"[...] Unlike hypothesis testing, our spatial
	Confidence Sets (CSs) allow for inference on non-zero raw effect sizes. [...]"}.
	The fallacy of the null hypothesis can be an important practical problem.
	Yet CoPE sets fall short being a conceptual solution because they still assume a zero-mean noise model.
	Therefore their inference on level sets $c$ of the true signal suffers from the same problem
	of finding signals above $c$ if the noise is not zero-mean.
	To make this point more clear by our established duality Proposition \ref{prop:duality} any FWER
	test with strong control at level $\alpha$ which tests the alternatives
	$\mathbf{H}_{0,s}:~\mu(s) = c$ vs. $\mathbf{H}_{1,s}:~\mu(s) \neq c$
	would be a solution to the fallacy of the null hypothesis, if CoPE sets are.
	An actual possibility to dissipate Meehl's concern
	are for example $(1-\alpha)$-SCoRE sets over $\big( \{ c + \Delta \}, \{ c - \Delta  \} \big)$,
	$\Delta >0$, if we can assume that
	the mean of the error process is bounded within
	$[-\Delta, \Delta]$. Yet again this approach is
	dual to a local relevance test by Proposition \ref{prop:duality}.
	
	So what are the advantages of SCoRE sets over hypothesis testing?
	First and foremost SCoRE sets break with the dogma of phrasing
	research questions in terms of statistical hypotheses.
	They emphasize what really matters: a quantifiable observable $\mu$
	and what can be concluded from an experiment about the uncertainty of preimages
	of $\mu$ which are relevant
	for the researcher. Secondly, they allowed us to derive the oracle limiting
	distributions for the canonical hypotheses tests derived from a ULT that control the FWER in the
	strong sense at level $\alpha$ and thereby disclose
	the actual target of such multiple tests.
	Thirdly, the natural interpretation of SCoRE sets, nicely presented\footnote{The description actually
	corresponds to the inclusion statement from this article and not to the inclusion statement
	from \cite{Sommerfeld:2018CoPE} that was used to generate the figure.} for $\cA=\cB = \{c\}$
	in Fig.1 of \cite{Bowring:2019}, is closer to the interpretation of confidence intervals
	and therefore hopefully does cause less confusion among students,
	practitioners and experts than the interpretation of tests and $p$-values.

\section*{Acknowledgments}
F.T. is funded by the Deutsche Forschungsgemeinschaft (DFG) under Excellence Strategy The
Berlin Mathematics Research Center MATH+ (EXC-2046/1, project ID:390685689).
F.T. and A.S. were partially supported by NIH grant R01EB026859.
The first ideas of this paper emerged during a revision of a project with A.
Bowring and T. Nichols and we are thankful to both for helpful discussions
in early stages of the manuscript.
F.T. also wants to thank the WIAS Berlin, where parts of this research was performed,
for offering a guest researcher status and especially K. Tabelow and J. Pohlzehl
for their general hospitality.
F.T. thanks D. Liebl for reading through the introduction and the example on multiple linear regression and
giving precious ideas how to streamline the presentation and
providing the opportunity to present parts of this work on
a conference and in a seminar in Bonn.
F.T. owes special gratitude to B. Stankewitz for tremendous psychological support
during Covid times, helpful discussions on uniform convergence
(Lemma \ref{lem:ConvergenceMaxima}), struggling through the introduction
and giving valuable feedback when the notation was still a complete mess.

\bibliographystyle{unsrtnat}
\bibliography{paper-ref}

\appendix
Add somewhere:
\begin{remark}
Figure \ref{fig:ContMuProblem} shows that, if $\mu$ is not continuous or there is no set $\cA'\subseteq C(S)$
such that $\Gamma\big( \cA \big) = \Gamma\big( \cA' \big)$, then it is necessary in general to consider
the generalized preimage $\mathfrak{u}^{-1}_\cA$ instead of $\mu^{-1}_\cA$ in our main theorem.
However, in Appendix \ref{App:frakSeqS} we give a general condition for which $\mathfrak{u}^{-1}_\cA = \mu^{-1}_{\cA}$.
\end{remark}

\FloatBarrier
\newpage
\section{Additional Figures}
\FloatBarrier

\begin{figure}[b!]
			\includegraphics[width=0.49\textwidth]{\figurepath 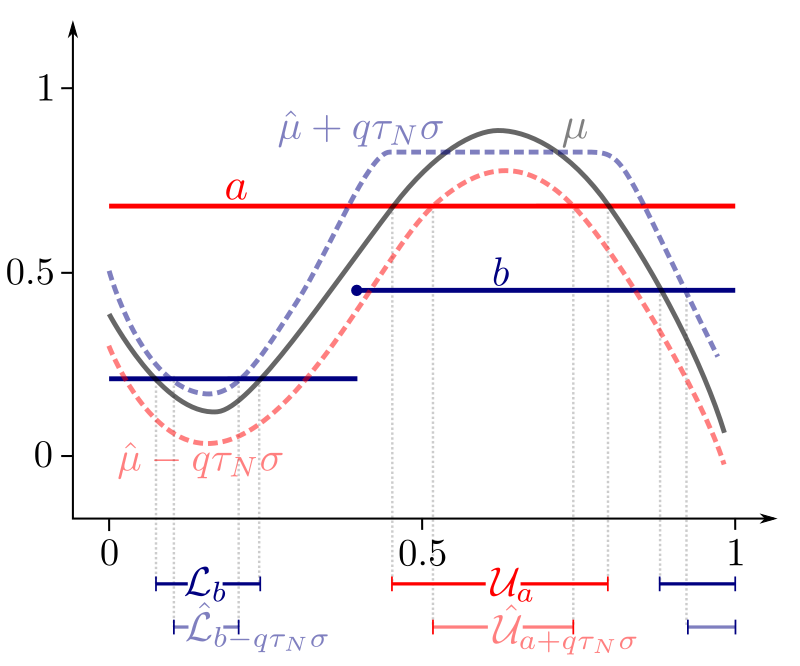}
			\includegraphics[width=0.49\textwidth]{\figurepath 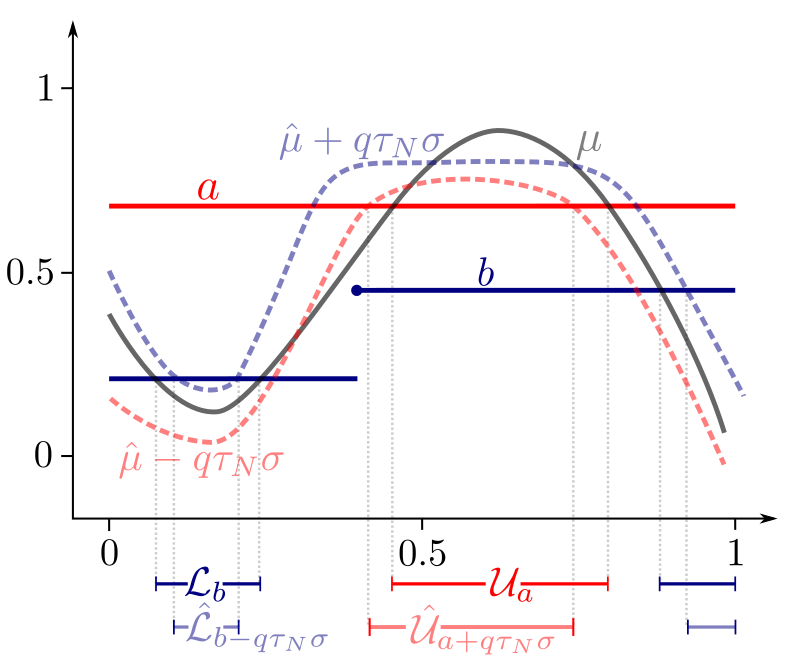}
		\caption{Illustration of SCoPE sets for $\cA = \{a\}$ and $\cB = \{b\}$ using the band given by
				 $\hat\mu_N \pm q\tau_N\sigma$. \textit{Left:} In contrast to a simultaneous confidence
				 band the true $\mu$ does not need to be inside the band $\hat\mu_N \pm q\tau_N\sigma$
				 everywhere. Only close to $\mu^{-1}_a$ and $\mu^{-1}_b$ it is necessary that
				 $\hat\mu_N - q\tau_N\sigma < \mu$ and $\hat\mu_N + q\tau_N\sigma > \mu$ respectively.
				 \textit{Right:} An example that the SCoRE set inclusion for $a$ is not satisfied because
				 $\hat\mu_N - q\tau_N\sigma > \mu$ in a neighbourhood of $\mu^{-1}_a$.	
		}\label{Fig:ScoPESconcept}
\end{figure}

\begin{figure}[b!]
			\includegraphics[width=0.49\textwidth]{\figurepath 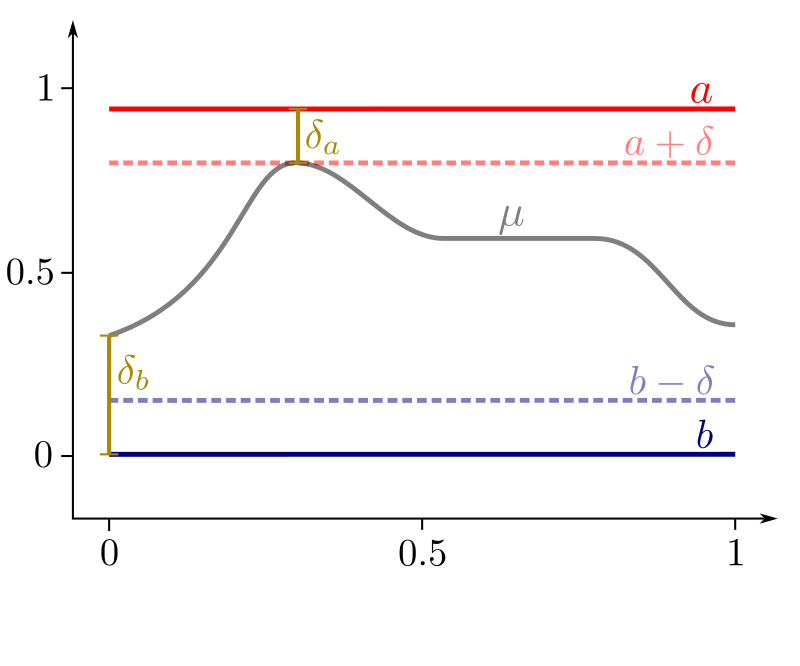}
			\includegraphics[width=0.49\textwidth]{\figurepath 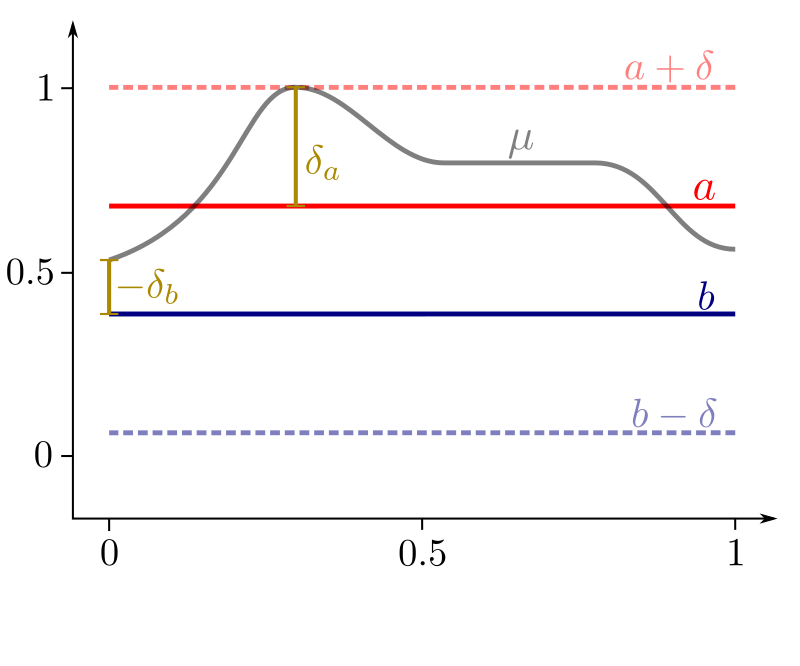}
		\caption{\textit{Left:} Illustration of obtaining the critical sets for the global relevance
		and the global relevance test. Right: global relevance hypothesis is true. Left:
		global equivalence hypothesis  is true.} \label{fig:RelEqivSimilarity}
\end{figure}


\FloatBarrier
\newpage

\section{Auxiliary Lemmata}\label{Appendix:Auxillary}
\subsection{A Lemma on Inner Probability}
The following result should be well-known. We include it for completeness since we will use it often
in our proofs.
\begin{lemma}\label{lem:LowerBoundInnerProb}
	For all $A,B\subseteq \Omega$ it holds that
	\begin{equation}
	\begin{split}
		&\Prb_*(\, A \cap B\,) \geq \Prb_*(\, A \,) - \Prb^*(\, \Omega\setminus B \,)
				= \Prb_*(\, A \,) + \Prb_*(\, B \,) - 1
	\end{split}
	\end{equation}
\end{lemma}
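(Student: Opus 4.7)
The plan is to establish the equality $\mathbb{P}^*(\Omega\setminus B) = 1 - \mathbb{P}_*(B)$ first, and then prove the inequality directly from the definitions of inner and outer probability via measurable approximants.

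For the equality, I would take the definition $\mathbb{P}^*(\Omega\setminus B) = \inf\{\mathbb{P}(W) : W \in \mathfrak{P},\ W \supseteq \Omega\setminus B\}$ and substitute $W = \Omega \setminus W'$. As $W$ ranges over measurable supersets of $\Omega\setminus B$, the complement $W'$ ranges over measurable subsets of $B$, so
\begin{equation*}
\mathbb{P}^*(\Omega\setminus B) = \inf_{W' \subseteq B,\ W' \in \mathfrak{P}} \big(1 - \mathbb{P}(W')\big) = 1 - \sup_{W' \subseteq B,\ W' \in \mathfrak{P}} \mathbb{P}(W') = 1 - \mathbb{P}_*(B).
\end{equation*}
This immediately gives the second equality in the claim.

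For the inequality, fix $\varepsilon > 0$ and choose, by definition of $\mathbb{P}_*$ and $\mathbb{P}^*$, measurable sets $W_A \subseteq A$ with $\mathbb{P}(W_A) \geq \mathbb{P}_*(A) - \varepsilon$ and $W_B \supseteq \Omega \setminus B$ with $\mathbb{P}(W_B) \leq \mathbb{P}^*(\Omega \setminus B) + \varepsilon$. Then $W_A \setminus W_B$ is measurable and contained in $A \cap B$, since $W_A \subseteq A$ and $\Omega \setminus W_B \subseteq B$. Therefore
\begin{equation*}
\mathbb{P}_*(A \cap B) \geq \mathbb{P}(W_A \setminus W_B) \geq \mathbb{P}(W_A) - \mathbb{P}(W_B) \geq \mathbb{P}_*(A) - \mathbb{P}^*(\Omega \setminus B) - 2\varepsilon.
\end{equation*}
Letting $\varepsilon \downarrow 0$ finishes the proof.

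There is no real obstacle here; the argument is entirely bookkeeping with the definitions of inner and outer probability. The only point to be slightly careful about is that $W_A \setminus W_B$ is genuinely a subset of $A \cap B$ (which requires $W_B \supseteq \Omega \setminus B$, not just $W_B \supseteq$ something related to $B$), but this is immediate from the construction.
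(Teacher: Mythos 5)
Your proof is correct, and it takes a genuinely different (and more self-contained) route than the paper. The paper's proof invokes the measurable kernel $A_*$ — the measurable set $A_* \subseteq A$ that \emph{attains} $\mathbb{P}_*(A) = \mathbb{P}(A_*)$ — together with the identity $(A \cap B)_* = A_* \cap B_*$, which it cites as an exercise from van der Vaart and Wellner; the bound then follows from the elementary inequality $\mathbb{P}(A_* \cap B_*) \geq \mathbb{P}(A_*) + \mathbb{P}(B_*) - 1$. You avoid both the existence of an exact maximizer and the intersection identity by working with $\varepsilon$-approximants straight from the sup/inf definitions, and by observing that $W_A \setminus W_B$ is a measurable subset of $A \cap B$. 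What the paper's approach buys is brevity once the kernel machinery is taken as known; what yours buys is that it needs nothing beyond the definitions of $\mathbb{P}_*$ and $\mathbb{P}^*$ and monotonicity/subadditivity of $\mathbb{P}$ on measurable sets. Your handling of the identity $\mathbb{P}^*(\Omega\setminus B) = 1 - \mathbb{P}_*(B)$ by complementing the indexing sets is also correct and matches what the paper uses implicitly in its final line.
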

\begin{proof}
	This follows from $ (A \cap B)_* = A_* \cap B_* $ (e.g., VW 1.2 Exc.15), since
	\begin{equation*}
		\Prb_*(\, A \cap B\,) = \Prb(\, A_* \cap B_*\,) \geq \Prb(\, A_* \,) + \Prb(\, B_* \,) - 1 = \Prb_*(\, A \,) + \Prb_*(\, B \,) - 1\,.
	\end{equation*}
	Note that for $A\subset\Omega$ the set $A_*\subseteq A$ is the (always existing) measurable
	set such that $\Prb_*(A) = \Prb(A_*)$. The claim follows from
	$ \Prb_*(\, B \,) = 1 - \Prb^*(\, \Omega \setminus B \,) $.
\end{proof}


\subsection{Inclusion Lemmas}
In this section we assume that $N\in\mathbb{N}$, $\hat\mu_N$ is an estimator of $\mu\in\ell^\infty(S)$,
$\tau_N>0$, $c\in\mathcal{F}(S)$, $ q, \sigma \in \ell^\infty(S) $ such that
$
0 < \mathfrak{o} \leq \sup_{s\in S} \sigma(s) \leq \mathfrak{O} < \infty
$
and we use the notation
$
	 G_N =  \frac{ \hat\mu_N - \mu }{ \tau_{N}\sigma}
$
from \eqref{eq:FiniteG}.
\begin{lemma}\label{lem:SCoPESlemma}
	Let $c\in\cF(S)$, $ {\tilde{\eta}} > \eta > 0 $ and
	assume there exists $ K > 0 $ and $ Z_N:~S\rightarrow \mathbb{R} $ such that for all
	$ s \in S \setminus \mu^{-1}_{\{c\}_{\tilde{\eta}} } $ it holds that
    \begin{equation}\label{eq:assumption0}
		\big(\, \hat\mu_N(s) - c(s) \,\big)
    	\cdot{\rm sgn}\big(\, \mu(s) - c(s) \,\big)
		\geq \sigma(s) \big(\, K  + Z_N(s) \,\big)\,.
	\end{equation}
	\begin{itemize}
		\item[(i)] Let $ c_q = c + q \sigma \tau_N $. Assume
			\begin{equation*}
			\begin{split}
  				\sup_{s\in \mu^{-1}_{\{c\}_{-\eta}}}  G_N(s) - &q(s) < 0\,,
  				\quad\quad\quad\quad
  				\sup_{ s \in \mu^{-1}_{\{c\}_{-\tilde\eta}} } G_N(s) - q(s) < \tfrac{\eta}{\tau_N\mathfrak{O}}\\
     			&\inf_{ s \in S } \tau_N^{-1}Z_N(s) + q(s) > - K\tau_N^{-1}\,,
   			\end{split}
			\end{equation*}
			then $ \hat{\cU}_{c_q} \subseteq \cU_c $
			and $ S \setminus \hat{\cL}_{c_q} \subseteq S \setminus \mathcal{\cL}_c $.
		\item[(ii)]  Let $ c_q = c - q \sigma \tau_N $. Assume
			\begin{equation*}
			\begin{split}
  				\inf_{s \in \mu^{-1}_{\{c\}_{+\eta} }}\!\!\!G_N(s) + &q(s) > 0\,,\quad\quad\quad\quad
						  \inf_{s \in \mu^{-1}_{\{c\}_{+\tilde\eta}}}\!\!\! G_N(s) + q(s)
						   > - \tfrac{\eta}{\tau_N\mathfrak{O}}\\
						  &\inf_{ s \in S }
						    \tau_N^{-1}Z_N(s) + q(s)
						   > - K\tau_N^{-1}\,,
						   \end{split}
			\end{equation*}
			then $ \hat{\cL}_{c_q} \subseteq \cL_c $
			and $ S \setminus \hat{\cU}_{c_q} \subseteq S \setminus \cU_c $.
	\end{itemize}
\end{lemma}
\begin{proof}
	Only the statements for open excursion sets are proven.
	The proofs for the closed excursion sets are almost identical.
	
	We begin with the proof of $(i)$. Note that
	$ \hat{\cU}_{c_q} \subseteq \cU_c $ is equivalent to
	$ S \setminus \cU_c \subseteq S \setminus \hat{\cU}_{c_q} $.
	 Let $ s_0 \in \big(S \setminus \cU_c\big) \cap \mu^{-1}_{\{c\}_{-\eta}}  $ which implies
	 $\mu(s_0) \leq c(s_0) $. This together with the first inequality of the assumptions yields
	\begin{equation*}
		\frac{\hat\mu_N(s_0) - c(s_0) }{\tau_N \sigma(s_0)}
						\leq G_N(s_0) < q(s_0)
	\end{equation*}
	showing
	$
	\big(S \setminus \cU_c\big) \cap \mu^{-1}_{\{c\}_{-\eta}}
			\subseteq S \setminus \hat{\cU}_{c_q}
	$.
	Similarly,
	$
	s_0 \in \big(S \setminus \cU_c\big)  \cap
			\mu^{-1}_{\{c\}_{-\tilde\eta}} \cap
			S \setminus \mu^{-1}_{\{c\}_{-\eta}}
	$
	satisfies $ c(s_0) - \mu(s_0) > \eta $ which is equivalent to $\mu(s_0) + \eta < c(s_0) $.
	Together with the second inequality of the assumptions this yields
	\begin{equation*}
		\frac{\hat\mu_N(s_0) - c(s_0)}{\tau_N \sigma(s_0)}
			\leq G_N(s_0) - \frac{\eta }{\sigma(s_0)\tau_N}
			\leq G_N(s_0) - \frac{\eta }{\mathfrak{O}\tau_N}
			 < q(s_0)
	\end{equation*}
	showing $ \big( S \setminus \cU_c \big)  \cap
			\mu^{-1}_{\{c\}_{-\tilde\eta}} \cap
			S \setminus \mu^{-1}_{\{c\}_{-\eta}} \subseteq S \setminus \hat{\cU}_{c_q} $.
	Finally,
	$ s_0 \in \big(S \setminus \cU_c\big) \cap S \setminus \mu^{-1}_{\{c\}_{-\tilde\eta}} $
	and
	\begin{equation*}
	  \sup_{ s \in S }
	   -\tau_N^{-1}Z_N(s) - q(s)
	   < K \tau_N^{-1} ~ ~ ~\Leftrightarrow  ~ ~ ~ \inf_{ s \in S }
	   \tau_N^{-1}Z_N(s) + q(s)
	   > - K \tau_N^{-1}
	\end{equation*}
	combined with \eqref{eq:assumption0} implies that
	\begin{equation*}
	\frac{ \hat\mu_N(s_0) - c(s_0) }{ \tau_N \sigma(s_0) }
	\leq
	- K \tau_N^{-1} - \tau_N^{-1} Z_N(s_0) < - K \tau_N^{-1}  + q(s_0) + K \tau_N^{-1}  = q(s_0)\,.
	\end{equation*}
	Thus,
	$ \big( S \setminus \cU_c \big) \cap S \setminus \mu^{-1}_{\{c\}_{-\tilde\eta}} \subseteq S \setminus \hat{\cU}_{c_q} $.
	Hence we proved
	$ S \setminus \cU_c \subseteq  S \setminus \hat{\cU}_{c_q} $.
	
	We prove $(ii)$ similarly. Again note that
	$ \hat{\cL}_{c_q} \subseteq \cL_c $ is equivalent to
	$ S \setminus \cL_c \subseteq S \setminus \hat{\cL}_{c_q} $.
	Assume $s_0 \in \big( S \setminus \cL_c \big) \cap \mu^{-1}_{\{c\}_{+\eta}}  $.
	Thus, $\mu(s_0) \geq c(s_0) $ and the first inequality of the assumptions yields
	\begin{equation*}
		\frac{\hat\mu_N(s_0) - c(s_0) }{\tau_N \sigma(s_0)}
		\geq G_N(s_0)
		 > -q(s_0)
	\end{equation*}
	which shows
	$ \big( S \setminus \cL_c \big) \cap \mu^{-1}_{\{c\}_{+\eta}} \subseteq S \setminus \hat{\cL}_{c_q} $.
	Similarly, $ s_0 \in  \big( S \setminus \cL_c \big) \cap \big( \mu^{-1}_{\{c\}_{+\tilde\eta}} \setminus  \mu^{-1}_{\{c\}_{+\eta}}  \big) $ implies $\mu(s_0) -\eta  > c(s_0) $. Hence, the second inequality of the assumptions yields
	\begin{equation*}
		\frac{\hat\mu_N(s_0) - c(s_0)}{\tau_N \sigma(s_0)}
			> G_N(s_0) + \frac{\eta}{\tau_N\mathfrak{O}}
			> -q(s_0)
	\end{equation*}
	showing
	$\big( S \setminus \cL_c \big) \cap \big( \mu^{-1}_{\{c\}_{+\tilde\eta}} \setminus  \mu^{-1}_{\{c\}_{+\eta}}  \big) \subseteq S \setminus \hat{\cL}_{c_q}$.
	Finally, for $ s_0 \in \big( S \setminus \cL_c \big) \cap  S \setminus \mu^{-1}_{\{c\}_{+\tilde\eta}} $
	it holds that
	\begin{equation*}
		\frac{ \mu_N(s_0) - c(s_0) }{ \tau_N \sigma(s_0) }
		\geq
		K \tau_N^{-1} + \tau_N^{-1} Z_N(s_0) > K \tau_N^{-1} -q(s_0) - K \tau_N^{-1}  = -q(s_0)\,.
	\end{equation*}
	Thus,
	$ \big( S \setminus \cL_c \big) \cap  S \setminus \mu^{-1}_{\{c\}_{+\tilde\eta}} \subseteq S \setminus \hat{\cL}_{c_q} $.
	Collecting the results yields
	$ S \setminus \cL_c \subseteq S \setminus \hat{\cL}_{c_q} $.
\end{proof}
\begin{lemma}\label{lem:CoPEContainLemma}
	Let $b,c\in\mathcal{F}(S)$, $\tilde\eta > 0$, $Z_N$ be defined as in
	Lemma \ref{lem:SCoPESlemma} and $c_q^\pm = c \pm q\tau_N\sigma$.
	Assume
	$
		\sup\big\{ G_N(s)~\vert~s \in \mu^{-1}_{\{c\}_{\tilde{\eta}}} \big\}$, 
		$\inf_{s \in S} \tau_N^{-1}Z_N(s)
	$	
	and $G_N(s)$, $s \in \mu^{-1}_{\{c\}_{\tilde{\eta}}}$, are asymptotically tight.
	
	If $\inf_{s\in S} \{ c(s) - b(s) \} > 0$, then
	\begin{equation*}
		\lim_{N\rightarrow\infty}\Prb_*\big[\,  \hat{\cU}_{c_q^+} \subseteq \cU_{b} \,\big]
								  		 = 1\,,~ ~ ~ ~
		\lim_{N\rightarrow\infty}\Prb^*\big[\, 
								   \hat{\cL}_{ c_q^-} \subseteq \cL_{b}  \,\big]
								  		 = 0\,,
	\end{equation*}
	where we additionally assume $ \cL_{c} \setminus \cL_{b} \neq \emptyset $ for the second equality.
	
	If $\sup_{s\in S} \{ c(s) - b(s) \} < 0$, it holds
	\begin{equation*}
		\lim_{N\rightarrow\infty}\Prb_*\big[\, 
								  \hat{\cL}_{c_q^-} \subseteq \cL_{b}  \,\big]
								  		 = 1\,,~ ~ ~ ~
		\lim_{N\rightarrow\infty}\Prb^*\big[\,  \hat{\cU}_{c_q^+} \subseteq \cU_{b} \,\big]
								  		 = 0\,,
	\end{equation*}
	where we additionally assume 
	$
	\cU_{c} \setminus \cU_{b} \neq \emptyset
	$ for the second equality.
\end{lemma}	
\begin{proof}
		We only proof the first claim as the proof of the second is similarly. The assumption $\delta > 0$ yields
		\begin{equation*}
		\begin{split}
			\Delta^N_s
			=     \frac{\mu(s) - c (s)}{\tau_N\sigma(s)}
			\leq -\frac{ \delta }{ \tau_N\mathfrak{O} }
		\end{split}
		\end{equation*}
		for all $s \in S \setminus \cU_b$.
		Using this and defining $\cU_b^c = S \setminus \cU_b$ we obtain
		\begin{equation*}
		\begin{split}
			&\Prb_*\Big[  \hat{\cU}_{c_q^+} \subseteq \cU_{b} \Big]\\
				&= \Prb_*\Bigg[
								\sup_{s \in \cU_b^c} G_N(s) + \Delta^N_s - q(s) \leq 0
							\Bigg] \\
				&\geq \Prb_*\Bigg[
						\sup_{s \in \cU_b^c \cap \mu^{-1}_{\{c\}_{\tilde{\eta}}}} \hspace{-0.5cm}G_N(s) - q(s)
							\leq \tfrac{\delta}{\tau_N\mathfrak{O}}~ \wedge
						\hspace{-0.2cm}\sup_{s \in \cU_b^c \cap S \setminus\mu^{-1}_{\{c\}_{\tilde{\eta}}}}
									 \hspace{-0.5cm}G_N(s) + \Delta^N_s - q(s) \leq 0
						\Bigg]\\
				&\geq \Prb_*\Bigg[
							\sup_{s \in \cU_b^c \cap \mu^{-1}_{\{c\}_{\tilde\eta}}} \hspace{-0.5cm}G_N(s) - q(s)
												\leq \tfrac{\delta}{\tau_N\mathfrak{O}} ~ \wedge \hspace{-0.1cm}
							\inf_{s \in \cU_b^c \cap S \setminus \mu^{-1}_{\{c\}_{\tilde{\eta}}}} \hspace{-0.5cm}\tfrac{K + Z_N(s)}{\tau_N} + q(s) \geq 0
							\Bigg]\\
				&\geq \Prb_*\Bigg[\,
						\sup_{s \in \mu^{-1}_{\{c\}_{\tilde{\eta}}}} G_N(s)
						\leq \tfrac{\delta}{\tau_N\mathfrak{O}} - \Vert q \Vert_\infty 
						\,\Bigg] + \Prb_*\Bigg[\,
							\inf_{s \in S} \tfrac{K + Z_N(s)}{\tau_N} \geq \Vert q \Vert_\infty
							\,\Bigg] - 1
		\end{split}
		\end{equation*}
		Applying the limes inferior to both sides implies
		$
		\liminf_{N\rightarrow\infty}\Prb_*\Big[\,
						\hat{\cU}_{c_q^+} \subseteq \cU_{b}
						\,\Big] = 1
		$
		by the asymptotic tightness assumption.
		
	Similarly, let $s^* \in \cL_{c} \setminus \cL_{b} \neq \emptyset$, then
	$
			\Delta^N(s^*)
			\leq - \frac{\delta}{\tau_N\mathfrak{O}}
	$.
	Therefore, if $s^*\in \mu^{-1}_{\{c\}_{\tilde{\eta}}}$,
	\begin{equation*}
	\begin{split}
			\Prb^*\Big[\, \hat{\cL}_{c_q^-} \subseteq \cL_{b}  \,\Big]
				&= \Prb^*\Big[  G_N(s^*) + \Delta^N_s \geq -q(s^*) \Big]\\
				&\leq 1 - \Prb_*\Big[  G_N(s^*) > -q(s^*) + \tfrac{\delta}{\tau_N\mathfrak{O}} \Big]
	\end{split}
	\end{equation*}
	and, if $s^*\in S \setminus \mu^{-1}_{\{c\}_{\tilde{\eta}}} $,
	\begin{equation*}
	\begin{split}
			\Prb^*\Big[ \hat{\cL}_{c_q^-} \subseteq \cL_{b}  \big]
				= \Prb^*\Big[  \tfrac{ \hat\mu_N(s^*) - c(s^*) }{\tau_N\sigma(s^*)} \geq -q(s^*) \Big]
				\leq 1 - \Prb_*\Big[  \tfrac{K + Z_N(s^*)}{\tau_N} > q(s^*) \Big]\,.
	\end{split}
	\end{equation*}
	In both cases applying the limes superior the r.h.s. converges to zero by the asymptotic
	tightness assumption.
\end{proof}

\begin{lemma}\label{lem:IntersectionCope}
	Let $a, b\in\mathcal{F}(S)$, $a_q = a - q\tau_N\sigma$,
	$b_q = b + q\tau_N\sigma$ and
	$\inf_{s \in S} \big(\, a(s) - b(s) \,\big) > \delta > 0 $.
	Assume that
	$\inf\big\{ G_N(s) ~\vert~ s \in \mu^{-1}_{\{b\}_{\tilde{\eta}}} \}$ and
	$\inf\big\{ \tau_N^{-1}Z_N(s) ~\vert~ s \in S \big\}$
	are asymptotically tight. Then 
	\begin{equation*}
	\begin{split}
		\limsup_{N\rightarrow\infty }
				\Prb^*\Big[\,
					S \setminus \big( \cU_{b} \cup \hat{\cL}_{a_q} \big) \neq \emptyset
				\,\Big]
				= \limsup_{N\rightarrow\infty }\Prb^*\Big[\,
								S \setminus \big( \cL_{a}
									\cup \hat{\cU}_{b_q}  \big) \neq \emptyset
								\,\Big] = 0\,.
	\end{split}
	\end{equation*}
\end{lemma}
\begin{proof}
	We only show one of the two claims as the proofs are similar. Define $\cL_{a}^c = S \setminus \cL_{a}$, then
	\begin{equation*}
	\begin{split}
		&\Prb^*\Big[ S \setminus \big( \cL_{a}
									\cup \hat{\cU}_{b_q}  \big) \neq \emptyset
				\Big]\\
		&\leq
		\Prb^*\Bigg[
			\inf_{s \in \cL_{a}^c}  \tfrac{\hat\mu_N(s) - b(s)}{\tau_N\sigma(s)} - q(s) \leq 0
			 \Bigg]\\
		&\leq
		\Prb^*\Bigg[
				\inf_{s \in \mu^{-1}_{\{b\}_{\tilde{\eta}}} \cap \cL_{a}^c } 
														G_N(s) - q(s) \leq - \tfrac{\delta}{\tau_N\mathfrak{o}}
				\Bigg]+
		\Prb^*\Bigg[
			\inf_{s \in S} \tfrac{K + Z_N(s)}{\tau_N} - q(s) \leq 0
		\Bigg]\\
		&\leq
		\Prb^*\Bigg[
				\inf_{s \in \mu^{-1}_{\{b\}_{\tilde{\eta}}} }
							G_N(s) - q(s) \leq- \tfrac{\delta}{\tau_N\mathfrak{o}}
					\Bigg]
		+\Prb^*\Bigg[\,
				\inf_{s \in S} \tfrac{K + Z_N(s)}{\tau_N} - q(s) \leq 0
				\,\Bigg]\\
				&\leq 2 -
		\Prb_*\Bigg[
				\inf_{s \in \mu^{-1}_{\{b\}_{\tilde{\eta}}} }
							G_N(s) > \Vert q \Vert_\infty - \tfrac{\delta}{\tau_N\mathfrak{o}}
					\Bigg] -
		\Prb_*\Bigg[
				\inf_{s \in S} \tfrac{Z_N(s)}{\tau_N} > \Vert q \Vert_\infty - \tau_N K
				\Bigg]
	\end{split}
	\end{equation*}	
	Applying the limes superior to both sides implies that the r.h.s. converges
	to zero as $N$ tends to infinity by the asymptotical tightness assumption.
\end{proof}

\subsection{Lemmas on Uniform Convergence}\label{App:WeakMaxConvergence}
This appendix collects some facts about uniform convergence and supremum statistics. 
\begin{lemma}
\label{lem:ConvergenceMaxima}
    Let $ \big(\mathcal{A}_N\big)_{ N \in \mathbb{N} } \subseteq S$ and
	$\mathcal{A}, \mathcal{B}\subset S $, $ (\varepsilon_N)_{ N \in \mathbb{N} } $ converging to zero such
	that $ d_H(\mathcal{A}_N, \mathcal{A})\leq \varepsilon_N $.
    Let $f,g\in \mathcal{F}(S)$. 
    \begin{equation*}
    \begin{split}
        &(i)~~\Big\vert \sup_{s\in \mathcal{A}_N}f(s) - \sup_{t \in \mathcal{A}}f(t)\Big\vert\\
        		&\hspace{2cm}\leq \max\Big( 0,\, \sup \Big\{ \vert f(s)-f(t) \vert ~\Big\vert~ (s,t) \in \big( \mathcal{A}_N\setminus\mathcal{A} \big) \times \big(  \mathcal{A} \setminus {\rm int}(\mathcal{A}) \big):~d(s,t)\leq \varepsilon_N \Big\}\Big)\\
    	&(ii)~~\Big\vert  \sup_{s\in \mathcal{B}}f(s)
          			-\sup_{t\in \mathcal{B}}g(t) \Big\vert
          			\leq   \sup_{s\in \mathcal{B}} \Big\vert f(s)
          			-g(t) \Big\vert
    \end{split}
    \end{equation*}
    Assume additionally $\mathcal{A}_N \supseteq \mathcal{A}_{N+1} \supseteq \mathcal{A}$
    for all $N\in \mathbb{N}$ and
    $ ( f_N )_{ N \in \mathbb{N} } \subset \ell^\infty(\mathcal{A}_1) $ such that
	\begin{equation*}
		\sup_{s\in \mathcal{A}_1} \vert f_N(s) - f(s) \vert \xrightarrow{N \rightarrow \infty} 0
	\end{equation*}	  
	and $f$ uniform continuous on
    $\mathcal{A}_1 \setminus {\rm int}\big(\mathcal{A}\big)$. Then
	    \begin{equation*}
    \begin{split}
    	&\hspace{-7.5cm}(iii)~~\Big\vert \sup_{s\in \mathcal{A}_N}f(s) - \sup_{t\in \mathcal{A}}f(t)\Big\vert
    							  \xrightarrow{N \rightarrow \infty} 0\\
        &\hspace{-7.5cm}(iv)~~\Big\vert  \sup_{s\in \mathcal{A}_N}f_N(s)
          			-\sup_{t\in \mathcal{A}}f(t) \Big\vert  \xrightarrow{N \rightarrow \infty} 0
    \end{split}
    \end{equation*}
\end{lemma}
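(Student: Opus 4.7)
The plan is to prove (i) and (ii) directly, then derive (iii) from (i) via uniform continuity and (iv) from (ii) and (iii) by a triangle inequality.

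For (i), I would case-split on the sign of $\sup_{\mathcal{A}_N}f-\sup_\mathcal{A}f$. In the case the difference is non-negative, take a sequence of near-maximizers $s_k\in\mathcal{A}_N$ with $f(s_k)\to\sup_{\mathcal{A}_N}f$. If some $s_k\in\mathcal{A}$, the difference is $\le 0$. Otherwise each $s_k\in\mathcal{A}_N\setminus\mathcal{A}$, and by $d_H(\mathcal{A}_N,\mathcal{A})\le\varepsilon_N$ there exists $t_k\in\mathcal{A}$ with $d(s_k,t_k)\le\varepsilon_N$. The geometric claim is that $t_k$ can be taken in $\mathcal{A}\setminus\mathrm{int}(\mathcal{A})$: because $s_k\notin\mathcal{A}$, any sufficiently-near interior point of $\mathcal{A}$ admits a small ball inside $\mathcal{A}$ containing an equally near candidate on the topological boundary, so one can always approximate a nearest point by a boundary point. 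Then
\begin{equation*}
f(s_k)-\sup_\mathcal{A}f\le f(s_k)-f(t_k)\le|f(s_k)-f(t_k)|,
\end{equation*}
which is at most the right-hand side supremum; taking $k\to\infty$ finishes this case. The opposite case is analogous with the roles of $\mathcal{A}$ and $\mathcal{A}_N$ swapped, and the $\max(0,\cdot)$ on the RHS absorbs both directions.

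Part (ii) follows from the one-line estimate $\sup_\mathcal{B}h_1-\sup_\mathcal{B}h_2\le\sup_\mathcal{B}(h_1-h_2)\le\sup_\mathcal{B}|h_1-h_2|$ applied to $(f,g)$ and $(g,f)$ in turn (the $t$ in the statement is evidently a typo for $s$). For (iii), I apply (i): under the nesting $\mathcal{A}_N\supseteq\mathcal{A}_{N+1}\supseteq\mathcal{A}$ and $\varepsilon_N\to 0$, any admissible pair $(s,t)$ in (i) satisfies $s,t\in\mathcal{A}_1\setminus\mathrm{int}(\mathcal{A})$ and $d(s,t)\le\varepsilon_N$; uniform continuity of $f$ on $\mathcal{A}_1\setminus\mathrm{int}(\mathcal{A})$ then drives the right-hand side of (i) to zero.

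Finally, (iv) follows from
\begin{equation*}
\Big|\sup_{\mathcal{A}_N}f_N-\sup_\mathcal{A}f\Big|\le\Big|\sup_{\mathcal{A}_N}f_N-\sup_{\mathcal{A}_N}f\Big|+\Big|\sup_{\mathcal{A}_N}f-\sup_\mathcal{A}f\Big|,
\end{equation*}
where (ii) with $\mathcal{B}=\mathcal{A}_N\subseteq\mathcal{A}_1$ controls the first summand by $\sup_{\mathcal{A}_1}|f_N-f|\to 0$ and (iii) handles the second. The main obstacle I anticipate is the geometric reduction inside (i): in a general metric space, guaranteeing that a near-nearest point of $\mathcal{A}$ to $s\notin\mathcal{A}$ may be chosen on $\mathcal{A}\setminus\mathrm{int}(\mathcal{A})$ requires a careful approximation argument — trivial in Euclidean or more generally geodesic spaces, but potentially subtle otherwise.
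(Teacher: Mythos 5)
Your argument follows essentially the same route as the paper's: (i) via near-maximizers paired through the Hausdorff bound, (ii) via the one-line estimate $\sup h_1-\sup h_2\le\sup(h_1-h_2)$, (iii) from (i) together with uniform continuity on $\mathcal{A}_1\setminus{\rm int}(\mathcal{A})$, and (iv) by the triangle inequality applied to (ii) (with $\mathcal{B}=\mathcal{A}_N$) and (iii). The only place you diverge is that you attempt to prove the restricted form of (i) --- with pairs in $(\mathcal{A}_N\setminus\mathcal{A})\times(\mathcal{A}\setminus{\rm int}(\mathcal{A}))$ --- directly, and you correctly flag the weak spot: in an arbitrary metric space a point of $\mathcal{A}$ within $\varepsilon_N$ of some $s\notin\mathcal{A}$ need not admit a comparably close companion in $\mathcal{A}\setminus{\rm int}(\mathcal{A})$ (e.g.\ if $\mathcal{A}$ is open in $S$, that set is empty). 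The paper does not do better here: its proof of (i) only establishes the bound over the unrestricted index set $\mathcal{A}_N\times\mathcal{A}$, and the passage to the restricted set is asserted in the proof of (iii), where the nesting $\mathcal{A}\subseteq\mathcal{A}_N$ legitimately disposes of the $s$-coordinate (a near-maximizer lying in $\mathcal{A}$ forces the difference of suprema to be at most $\epsilon$) but the restriction of the $t$-coordinate to $\mathcal{A}\setminus{\rm int}(\mathcal{A})$ is not argued. So your proposal is correct up to exactly the same unclosed geometric step as the paper's own proof; in the paper's applications $\mathcal{A}$ is a closed (generalized) preimage to which the $\mathcal{A}_N$ shrink, so the issue does not bite there, but neither write-up settles it for a general metric space.
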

\begin{proof}
For statement (i), fix $ \epsilon > 0 $ and $ N > 0 $, without loss of
generality, assume that $ \sup_{s\in \mathcal{A}_N}f(s)\geq\sup_{t\in \mathcal{A}}f(t) $.
Let $ s^* \in \mathcal{A}_N $ be such that $ \sup_{s \in \mathcal{A}_N}f(s) = f(s^*) + \epsilon $
and $ t^* \in \mathcal{A} $ such that $ d(t^*, s^*) < \varepsilon_N $
(exists since $d_H(\mathcal{A}_N, \mathcal{A})< \varepsilon_N$). Then
\begin{equation*}
\begin{split}
	\sup_{s\in \mathcal{A}_N}f(s)-\sup_{t\in \mathcal{A}}f(t)
	&\leq f(s^*)-f(t^*)+\epsilon\\
	&\leq \sup\Big\{ \vert\, f(s)-f(t) \,\vert ~\big\vert~
	(s,t) \in \mathcal{A}_N \times \mathcal{A}:~d(s,t)\leq \varepsilon_N  \Big\} + \epsilon
\end{split}
\end{equation*}
Since $ \epsilon $ can be arbitrarily small, 
\begin{equation*}
	\Big\vert \sup_{s\in \mathcal{A}_N}f(s) - \sup_{t\in \mathcal{A}}f(t) \Big\vert
	\leq \sup\Big\{ \vert\, f(s)-f(t) \,\vert ~\big\vert~
	(s,t) \in \mathcal{A}_N \times \mathcal{A}:~d(s,t)\leq \varepsilon_N  \Big\}\,.
\end{equation*} 

For statement (ii), 
\begin{align*}
   & \sup_{s\in \mathcal{B}}f_N(s) = \sup_{s\in \mathcal{B}}\big( f_N(s) - f(s) + f(s) \big)
   				\leq \sup_{s\in \mathcal{B}}\big(f_N(s)-f(s)\big)+\sup_{ s \in \mathcal{B}} f(s) \\
   \Longleftrightarrow ~~~& \sup_{s\in \mathcal{B}}f_N(s) -\sup_{s\in \mathcal{B}} f(s)
   \leq \sup_{s\in \mathcal{B}}\big( f_N(s) - f(s) \big)\leq \sup_{s\in \mathcal{B}} \vert f_N(s) - f(s) \vert
\end{align*}
Using  $\sup_{s\in \mathcal{B}}f(s)$ a similar calculation yields
\begin{equation*}
	\sup_{ s \in \mathcal{B} } f_N(s) - \sup_{ s \in \mathcal{B}} f(s) \geq -\sup_{s\in \mathcal{B}} \vert f_N(s) - f(s) \vert\,,
\end{equation*}
which proves the claim.

For statement (iii), note that since $\mathcal{A}_N\supset \mathcal{A}$ it is possible
to replace $s \in \mathcal{A}_N$ by $s\in \mathcal{A}_N \setminus \mathcal{A}$ and $t\in\mathcal{A}$ by $ t\in \mathcal{A} \setminus {\rm int}(\mathcal{A})$ in the supremum on the
r.h.s. of $(i)$, i.e.,
\begin{equation*}
\begin{split}
	\Big\vert \sup_{s\in \mathcal{A}_N}&f(s) - \sup_{t\in \mathcal{A}}f(t)\Big\vert\\
        		&\leq \max\Big( \sup\Big\{ \vert\, f(s)-f(t) \,\vert ~\big\vert~
	(s,t) \in \big( \mathcal{A}_N\setminus\mathcal{A} \big) \times \big(  \mathcal{A} \setminus {\rm int}(\mathcal{A}) \big):~d(s,t)\leq \varepsilon_N  \Big\}, 0 \Big)
\end{split}
\end{equation*}
Since $f$ is uniformly continuous on
$
\mathcal{A}_N \setminus {\rm int}\big( \mathcal{A} \,\big) \subseteq \mathcal{A}_1 \setminus {\rm int}\big( \mathcal{A} \,\big)
$,
r.h.s converges to zero as $\varepsilon_N\rightarrow 0$.
Statement (iv) follows directly from the triangle inequality and $(ii)$ and $(iii)$.
\end{proof}

\begin{lemma} \label{lem:Tweak}
	Let $ \mathcal{A}, \mathcal{B} \subseteq S $ and
	$(\mathcal{A}_N)_{N\in\mathbb{N}}, (\mathcal{B}_N)_{N\in\mathbb{N}}$
	be sequences of sets such that
	$\mathcal{A}_N \supseteq \mathcal{A}_{N+1} \supseteq \mathcal{A}$ and
	$\mathcal{B}_N \supseteq \mathcal{B}_{N+1} \supseteq \mathcal{B}$ for all $N \in \mathbb{N}$
	as well as 
    $ d_H(\mathcal{A}_N,  \mathcal{A}) \rightarrow 0 $ and $ d_H(\mathcal{B}_N,  \mathcal{B}) \rightarrow 0 $
	as $N \rightarrow \infty$.
    Assume $ G_{N} \rightsquigarrow G $ in $ \ell^\infty(\mathcal{A}_1 \cup \mathcal{B}_1) $
    with $G$ Borel measurable and the restriction of $G$ and $G_N$ to
    $
    \mathcal{A}_1 \setminus {\rm int}\big( \mathcal{A} \,\big)
    \cup
    \mathcal{B}_1 \setminus {\rm int}\big( \mathcal{B} \,\big)
    $
    have uniformly continuous sample paths. Then
    $$
    \max\left(\,\sup _{s \in \mathcal{A}_N} -G_{N}(s),~ \sup _{s \in \mathcal{B}_N} G_{N}(s)\,\right)
    		\rightsquigarrow
    \max\left(\,\sup _{s \in \mathcal{A}} -G(s),~ \sup _{s \in \mathcal{B}} G(s)\,\right)
    $$
\end{lemma}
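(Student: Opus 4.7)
The plan is to deduce the result from the extended continuous mapping theorem in the Hoffmann-J{\o}rgensen setting (see \cite[Thm.~1.11.1]{Vaart:1996weak}) applied to the sequence of functionals
\begin{equation*}
	T_N:\ell^\infty(\mathcal{A}_1\cup\mathcal{B}_1)\to\mathbb{R},\qquad
	T_N(f)=\max\Big(\sup_{s\in\mathcal{A}_N}-f(s),\,\sup_{s\in\mathcal{B}_N}f(s)\Big)
\end{equation*}
together with the limit functional $T(f)=\max\big(\sup_{s\in\mathcal{A}}-f(s),\,\sup_{s\in\mathcal{B}}f(s)\big)$. Since the input random element $G_N\rightsquigarrow G$ lives in $\ell^\infty(\mathcal{A}_1\cup\mathcal{B}_1)$ and $G$ is Borel measurable with sample paths a.s.\ in
\begin{equation*}
	D_0=\big\{\,f\in\ell^\infty(\mathcal{A}_1\cup\mathcal{B}_1)~\big\vert~ f\text{ is uniformly continuous on }\mathcal{A}_1\setminus\mathrm{int}(\mathcal{A})\cup\mathcal{B}_1\setminus\mathrm{int}(\mathcal{B})\,\big\},
\end{equation*}
the extended continuous mapping theorem will give $T_N(G_N)\rightsquigarrow T(G)$ once we verify its hypothesis: for every sequence $(f_N)_{N\in\mathbb{N}}\subseteq\ell^\infty(\mathcal{A}_1\cup\mathcal{B}_1)$ converging uniformly to some $f\in D_0$, one has $T_N(f_N)\to T(f)$.

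The verification is the substantive step and it reduces to Lemma \ref{lem:ConvergenceMaxima}. Because $\mathcal{A}_N\supseteq\mathcal{A}_{N+1}\supseteq\mathcal{A}$ and $d_H(\mathcal{A}_N,\mathcal{A})\to 0$, and because the restriction of $f\in D_0$ to $\mathcal{A}_1\setminus\mathrm{int}(\mathcal{A})$ is uniformly continuous, statement $(iv)$ of Lemma \ref{lem:ConvergenceMaxima} applied to $-f_N\to -f$ yields
\begin{equation*}
	\Big\vert\sup_{s\in\mathcal{A}_N}-f_N(s)-\sup_{t\in\mathcal{A}}-f(t)\Big\vert\xrightarrow{N\to\infty}0,
\end{equation*}
and the analogous argument gives the corresponding convergence for the $\mathcal{B}_N$-part. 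Since $\max:\mathbb{R}^2\to\mathbb{R}$ is continuous, $T_N(f_N)\to T(f)$ follows by combining the two limits, which completes the verification.

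It remains to note that $T(G)$ is Borel measurable: as $G$ is Borel measurable into $\ell^\infty(\mathcal{A}_1\cup\mathcal{B}_1)$ and, on the separable subset $D_0$ of continuous restrictions, $T$ is continuous with respect to the supremum norm (by the above argument applied with $f_N=f$), the composition is measurable where it matters. Hence the extended continuous mapping theorem applies and yields the asserted weak convergence.

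The main obstacle I anticipate is purely bookkeeping around measurability and the precise form of the continuous mapping theorem in the Hoffmann-J{\o}rgensen framework, in particular handling the fact that $T_N$ is defined on the same space $\ell^\infty(\mathcal{A}_1\cup\mathcal{B}_1)$ but involves shrinking index sets $\mathcal{A}_N,\mathcal{B}_N$; no new probabilistic input beyond the uniform continuity of the sample paths of $G$ on the stated sets is needed, so all the work is isolated in Lemma \ref{lem:ConvergenceMaxima}$(iv)$.
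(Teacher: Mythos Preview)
Your proposal is correct and follows essentially the same route as the paper's proof: both invoke the extended continuous mapping theorem \cite[Thm.~1.11.1]{Vaart:1996weak} with the functionals $T_N$ and $T$, reduce the required deterministic convergence $T_N(f_N)\to T(f)$ to Lemma~\ref{lem:ConvergenceMaxima}(iv) applied separately on the $\mathcal{A}_N$- and $\mathcal{B}_N$-parts, and then combine via the continuity of $\max$. The only cosmetic difference is that the paper makes the last step quantitative via $\max(a,b)=\tfrac12(a+b+|a-b|)$ and a triangle inequality, and it records explicitly that the domains $\mathbb{D}_N$ in the extended continuous mapping theorem are taken to be the subspace of functions with uniformly continuous restriction to $\mathcal{A}_1\setminus\mathrm{int}(\mathcal{A})\cup\mathcal{B}_1\setminus\mathrm{int}(\mathcal{B})$ (which is where the assumption on the sample paths of $G_N$ enters and why separability of $G$ is not needed).
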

\begin{proof}
    This is a consequence of the extended continuous mapping theorem \cite[Theorem 1.11.1]{Vaart:1996weak}
    and Lemma \ref{lem:ConvergenceMaxima}(iv). More precisely, define the maps
    $$
    H: \ell^\infty(\mathcal{A}_1 \cup \mathcal{B}_1) \rightarrow \mathbb{R}, \quad
    	f \mapsto \max\Bigg( \sup _{s \in \mathcal{A}} -f(s), \sup _{s \in \mathcal{B}} f(s) \Bigg)
    $$
    $$
    H_N: \ell^\infty(\mathcal{A}_1 \cup \mathcal{B}_1) \rightarrow \mathbb{R}, \quad
     f \mapsto  \max\Bigg( \sup _{s \in \mathcal{A}_N} -f(s), \sup _{s \in \mathcal{B}_N} f(s) \Bigg)
    $$

   The claim follows from the extended continuous mapping theorem, if for any sequence
   $(f_N)_{N\in\mathbb{N}} \subset \ell^\infty(\mathcal{A}_1 \cup \mathcal{B}_1)$ converging to
   $f$ in $\ell^\infty(\mathcal{A}_1 \cup \mathcal{B}_1)$ such that the restrictions of
   $f_N$ and $f$ to
   $
   \mathcal{A}_1 \setminus {\rm int}\big( \mathcal{A} \,\big)
   \cup
   \mathcal{B}_1 \setminus {\rm int}\big( \mathcal{B} \,\big)
   $ are uniformly continuous, it holds that $H_{N}(f_N) \rightarrow H(f)$.
   Using
   $
   		\max(a,b) = 2^{-1}\big(\, a + b + \vert a - b \vert \,\big)
   $ for $a,b\in\mathbb{R}$ and triangle inequalities yields
	\begin{equation*}
	   \big\vert H_{N}(f_N) - H(f) \big\vert
	   \leq \Big\vert \sup _{s \in \mathcal{A}_N} -f_N(s) - \sup _{s \in \mathcal{A}} -f(s) \Big\vert
	   			+ \Big\vert \sup _{s \in \mathcal{B}_N} f_N(s) - \sup _{s \in \mathcal{B}} f(s) \Big\vert\,,
	\end{equation*}      
   which converges to zero by Lemma \ref{lem:ConvergenceMaxima}(iv). Note that we do not need to assume
   $G$ to be separable since our $\mathbb{D}_N$ in \cite[Theorem 1.11.1]{Vaart:1996weak} is always
   the subspace of $\ell^\infty(\mathcal{A}_1 \cup \mathcal{B}_1)$ where the restriction to
   $
   \mathcal{A}_1 \setminus {\rm int}\big( \mathcal{A} \,\big)
   \cup
   \mathcal{B}_1 \setminus {\rm int}\big( \mathcal{B} \,\big)
   $ are uniformly continuous \cite[Problems and Complements 1., p.70]{Vaart:1996weak}.
\end{proof}

\FloatBarrier



\FloatBarrier
\newpage

\section{The Asymptotic SCoRE Set Metatheorem}\label{scn:MetaTheorem}
In this section we prove a general SCoRE Set Metatheorem. All theorems of the main manuscript are
corollaries of this result.
Its main benefit is that it has weaker assumptions on $\hat{\mu}_N$ than assuming a ULT.

As in the main manuscript, let $(\Omega,\mathfrak{P}, \Prb)$ be a
probability space, $(S, d)$ be a metric space and $\cA, \cB \subseteq \mathcal{F}(S)$.
Let $\mu \in \ell^\infty(S)$ and $\sigma \in\ell^\infty(S)$ such that
$0 < \mathfrak{o} < \sigma(s) < \mathfrak{O}$ for all $s \in S$.
Let $ \hat\mu_N:~\Omega \rightarrow \ell^\infty(S)$, $ N \in \mathbb{N}$,
and $(\tau_N)_{N\in\mathbb{N}}$ a positive sequence converging to zero.
Define a map $G_N:~ \Omega \rightarrow \ell^\infty(S) $ by
\begin{equation*}
	G_N = \frac{\hat\mu_N - \mu}{\tau_N\sigma}\,.
\end{equation*}
Since we do not assume that the map $G_N$ is measurable, weak convergence involving $G_N$
is understood in the sense of \cite[Definition 1.3.3]{Vaart:1996weak}.
For $\cC \subseteq \mathcal{F}(S)$ we define the sets $	U^{\pm\eta}_\cC = {\rm cl}\mu^{-1}_{\cC_{\pm\eta}}$.
Moreover, we define the set where $G_N$ has continuous sample paths from the inside of $\mu^{-1}_\cC$ in a neighbourhood around $s\in S$ by
\begin{equation*}
\Omega_{ct}^s = \left\{\omega\in\Omega ~\vert~
	\exists \text{ open } V \ni s :~G_N\vert_{V\cap {\rm cl}\mu^{-1}_\cC}(\omega) \in C\big( V\cap {\rm cl}\mu^{-1}_\cC \big)  \right\}
\end{equation*}
and for any $W \subseteq \partial \mu^{-1}_\cC \setminus \mu^{-1}_\cC$ we define
$\Omega_{ct}^W =	\bigcap_{s\in W} \Omega_{ct}^s$ the subset of $\Omega$ where $G_N$ has continuous sample paths on $W$.

For $A, B \subseteq S$, $f, q^\pm\in\ell^\infty(S)$ and $q=\{q^+,q^-\}$, we define
\begin{equation*}
\begin{split}
	T^{q}_{A, B}( f )
	&= \max\Bigg( \sup_{s \in A } f(s) - q^+(s)
				,~ \sup_{s \in B} -f(s)- q^-(s) \Bigg)\,.
\end{split}
\end{equation*}

Let $\mathfrak{G}^{q}$ and $\mathfrak{H}^{q}$ depending on $q$ be random variables with
values in $\mathbb{R}$, $\tilde\eta > 0$ and $(\eta_N)_{N\in\mathbb{N}}$ a positive sequence
such that $\lim_{N\rightarrow\infty}\eta_N\tau_N^{-1} = \infty$.
We require the following assumptions:
\begin{itemize}[leftmargin=1.4cm]
	\item[\textbf{(M1)}] 
			Assume $T^{q}_{U^{-\eta_N}_\cA, U^{+\eta_N}_\cB}( G_N )
		 							\rightsquigarrow \mathfrak{G}^{q}$
		weakly in $\mathbb{R}$.
	\item[\textbf{(M2)}] The sequence
						$
						T^{q}_{U^{-\tilde\eta}_{\cA}, U^{+\tilde\eta}_{\cB}}( G_N )
						$
						is asymptotically tight in the sense of \cite[p.21]{Vaart:1996weak}.
	\item[\textbf{(M3)}] Assume there is a constant $ K > 0 $ such that
			    \begin{equation*}
					\big( \hat\mu_N(s) - c(s) \big)
			    	\cdot{\rm sgn}\big( \mu(s) - c(s)\big)
					\geq \sigma(s) \big( K  + Z_N(s) \big)
				\end{equation*}
    	for all $ c \in \cA \cup  \cB$
    	and all
    	$ s \in S \setminus \big( \mu^{-1}_{\cA_{-\tilde{\eta}}} \cup \mu^{-1}_{\cB_{+\tilde{\eta}}} \big) $
    	and an sequence $ Z_N:~\Omega \rightarrow \ell^\infty(S) $
    	such that $ \inf_{s \in S } \tau_N^{-1}Z_N(s) $ is asymptotically tight.
    	\item[\textbf{(M4)}] Let $W_\cA \subseteq \partial \mu^{-1}_\cA \setminus \mu^{-1}_\cA$
    						 and $W_\cB \subseteq \partial \mu^{-1}_\cB \setminus \mu^{-1}_\cB$
    						 with $\Prb^*\big( \Omega_{ct}^{W_\cA} \big) = \Prb^*\big( \Omega_{ct}^{W_\cB} \big) = 1$
    						 and assume that
    			$T^{q}_{\mu^{-1}_{\cA} \cup W_{\cA},\mu^{-1}_{\cB} \cup W_{\cB}}( G_N )
		 							\rightsquigarrow \mathfrak{H}^{q}$
				weakly in $\mathbb{R}$.
\end{itemize}
With this at hand, we can prove our SCoRE Set Metatheorem.
\begin{theorem}\label{thm:SCoPESMetatheorem}
	Let $ \cA, \cB \subseteq \mathcal{F}(S) $ and $ q^\pm\in\ell^\infty(S) $
	be arbitrary.
	Define
	$a_q = a + \tau_Nq^+\sigma$ for all $a \in \Gamma\big( \cA \big)$
	and $b_q = b - \tau_Nq^-\sigma$ for all $b \in \Gamma\big( \cB \big)$.
	\begin{enumerate}
		\item 	Assume
				$\mathfrak{u}^{-}_{\cA} \cup \mathfrak{u}^{+}_{\cB} \neq \emptyset$
				and {\rm\textbf{(M1)}}, {\rm\textbf{(M2)}} and {\rm\textbf{(M3)}}. Then
				\begin{equation*}
					\begin{split}
						\liminf_{N\rightarrow\infty} \Prb_*\Bigg[\,
							\forall a \in \Gamma\big( \cA \big)\, \forall b \in \Gamma\big( \cB \big):~&
						 \hat{\cU}_{a_q} \subseteq \cU_{a}
						 ~\wedge ~
						 \hat{\cL}_{b_q} \subseteq \cL_{b}
						  \,\Bigg]
						  \geq \Prb\Big[\, \mathfrak{G}^{q} < 0 \,\Big]\,.
					\end{split}
				\end{equation*}
		\item 	Assume
				$\mathfrak{u}^{-}_{\cA} \cup \mathfrak{u}^{+}_{\cB} = \emptyset$, {\rm\textbf{(M2)}} and {\rm\textbf{(M3)}}. Then
				\begin{equation*}
					\begin{split}
						\lim_{N\rightarrow\infty} \Prb_*\Bigg[\,
							\forall a \in \Gamma\big( \cA \big)\, \forall b \in \Gamma\big( \cB \big):~
								\hat{\cU}_{a_q} \subseteq \cU_{a}
									~\wedge ~
						 		\hat{\cL}_{b_q} \subseteq \cL_{b}
						  \,\Bigg]
						 = 1\,.
					\end{split}
				\end{equation*}
		\item 	Assume
				$\mathfrak{u}^{-}_{\cA}
					\cup \mathfrak{u}^{+}_{\cB} \neq \emptyset$. Then under {\rm\textbf{(M4)}} it holds that 
				\begin{equation*}
				\begin{split}
					\limsup_{N\rightarrow\infty} \Prb^*\Bigg[\,
						\forall a \in \Gamma\big( \cA \big)\, \forall b \in \Gamma\big( \cB \big):~&
								\hat{\cU}_{a_q} \subseteq \cU_{a}
									~\wedge ~
						 		\hat{\cL}_{b_q} \subseteq \cL_{b}
						  \,\Bigg] \leq \Prb\Big[\, \mathfrak{H}^{q} \leq 0 \,\Big]\,.
				\end{split}
				\end{equation*}
				\end{enumerate}
\end{theorem}
\begin{proof}
	
	We begin with proving the first and the second claim.
	For any $ \eta > 0 $ such that $\tilde{\eta} >\eta$, Lemma \ref{lem:SCoPESlemma}(i) yields that
	\begin{equation}\label{eq:Lemma1iiCons}
	\begin{split}
		\sup_{s \in {\rm cl}\,\mu^{-1}_{\cA_{-\eta}}} \hspace{-0.2cm} G_N(s) &-q^+(s) < 0\,,
  		\quad
  		\sup_{ s \in {\rm cl}\,\mu^{-1}_{\cA_{-\tilde{\eta}}}} \hspace{-0.2cm} G_N(s)
   				-q^+(s) < \mathfrak{O}\eta \tau_N^{-1}\\
   		&\inf_{ s \in S } \tau_N^{-1}Z_N(s) + q^+(s) > - K \tau_N^{-1}\,,
    \end{split}
	\end{equation}
	implies $ \hat{\cU}_{a_q } \subseteq \cU_{a} $
	for all $ a \in \Gamma( \cA) $ and Lemma \ref{lem:SCoPESlemma}(ii) shows that
	\begin{equation}\label{eq:Lemma1iCons}
	\begin{split}
		\sup_{s \in {\rm cl}\,\mu^{-1}_{\cB_{+\eta}}} \hspace{-0.2cm} -G_N(s) &- q^-(s) < 0\,,\quad
  		\sup_{ s \in {\rm cl}\,\mu^{-1}_{\cB_{+\tilde{\eta}}}} \hspace{-0.2cm} -G_N(s) - q^-(s)
   				< \eta \tau_N^{-1}\,,\\
     		&\inf_{ s \in S } \tau_N^{-1}Z_N(s) + q^-(s) > - K \tau_N^{-1}\,,
    \end{split}
	\end{equation}
	implies
	$\hat{\cL}_{b_q } \subseteq {\cL}_{b}$  for all $ b \in \Gamma( \cB) $.
	
	Let $\left( \eta_{N} \right)_{ N \in \mathbb{N} } $
	be a sequence of positive numbers such that $ \eta_{N} \rightarrow 0 $ and
	$\eta_{N} \tau_{N}^{-1} \rightarrow \infty$. Combining Lemma \ref{lem:LowerBoundInnerProb},
	\eqref{eq:Lemma1iCons} and \eqref{eq:Lemma1iiCons} yields
	\begin{equation*}
	\begin{split}
		&\Prb_* \Big[\,
					\forall a \in \Gamma\big( \cA \big)\, \forall b \in \Gamma\big( \cB \big):~
					 	\hat{\cU}_{a_q} \subseteq \cU_{a}
					 ~ ~ \wedge ~ ~
						 \hat{\cL}_{b_q} \subseteq \cL_{b}  \,\Big]\\
	 		&\geq
	 		\Prb_*\Big[\, T^{q}_{U^{-\eta_N}_{\cA}, U^{+\eta_N}_{\cB}}( G_N ) < 0 \,\Big]  -
	 		  \Prb^*\left[\,
	 		  	T^{q}_{U^{-\tilde\eta}_{\cA}, U^{+\tilde\eta}_{\cB}}( G_N )
	 \geq \mathfrak{O}\eta_N \tau_N^{-1}\,\right]\\
	  		&\hphantom{\geq} -
	  		\Prb^*\left[\,
		  		\inf_{ s \in S } \tau_N^{-1}Z_N(s)
	   			< -\inf_{ s \in S } \max\big( q^-(s), q^+(s) \big) - K \tau_N^{-1}
	   		\,\right]\\
	 		&\geq
	 	\Prb_*\Big[\,  T^{q}_{U^{-\eta_N}_{\cA}, U^{+\eta_N}_{\cB}}( G_N ) < 0 \,\Big]  -
	 		  \Prb^*\left[\,
	 		  	T^{q}_{U^{-\tilde\eta}_{\cA}, U^{+\tilde\eta}_{\cB}}( G_N )
	 \geq  \mathfrak{O}\eta_N \tau_N^{-1}\,\right]\\
	  		&\hphantom{\geq} -
	  		\Prb^*\left[\,
		  		\inf_{ s \in S } \tau_N^{-1}Z_N(s)
	   			< -\inf_{ s \in S } \max\big( q^-(s), q^+(s) \big) - K \tau_N^{-1}
	   		\,\right].
	 	\end{split}
		\end{equation*}
	The last two summands can be made arbitrarily small because the asymptotically tightness
	condition implies that for any $\epsilon>0$ we find $\tilde K>0$ such that for all $\delta>0$
	\begin{equation*}
	\begin{split}
				\liminf_{N\rightarrow\infty}-\Prb^*&\left[\,
	 		  		\inf_{ s \in S } \tau_N^{-1}Z_N(s)
	   				< -\tilde K - \delta
	 		  	\,\right]\\
	 		  	&= 
				-1 + \liminf_{N\rightarrow\infty}\Prb_*\left[\,
\inf_{ s \in S } \tau_N^{-1}Z_N(s)
	   				> -\tilde K - \delta
	 		  	\,\right]\\
	 		  	&\geq 
				-1 + \liminf_{N\rightarrow\infty}\Prb_*\left[\,
\inf_{ s \in S } \tau_N^{-1}Z_N(s) \in 
	   				[-\tilde K - \delta, \tilde K + \delta]
	 		  	\,\right]\\
	 		  	&\geq \epsilon\,.
	 \end{split}
	\end{equation*}
	Thus, setting  $\tilde K = \inf_{ s \in S } \max\big( q^-(s), q^+(s) \big)$
	we see that we can bound the last term by $\epsilon$.
	The same argument applies to the second summand.	
	
	If $\mathfrak{u}^{-}_{\cA} \cup \mathfrak{u}^{+}_{\cB} \neq \emptyset$, then
	the Portmanteau Theorem \cite[1.3.4]{Vaart:1996weak} and {\rm \textbf{(M1)}} yields		
	\begin{equation*}
		\begin{split}
			\liminf_{ N \rightarrow \infty }\, &\Prb_*\left[\,
			T^{q}_{U^{-\eta_N}_{\cA}, U^{+\eta_N}_{\cB}}( G_N ) < 0 \,\right]
			\geq \Prb\big[\, \mathfrak{G}_{q} < 0 \,\big]\,,
	 \end{split}
	\end{equation*}
	which proves the first claim.
	
	If $\mathfrak{u}^{-}_{\cA} \cup \mathfrak{u}^{+}_{\cB} = \emptyset$, then there is an $\eta>0$ such that
	${\rm cl}\,\mu^{-1}_{\cA_{-\eta}} \cup {\rm cl}\,\mu^{-1}_{\cB_{+\eta}} = \emptyset$
	for some $\eta >0$, then
	$
		T^{q}_{U^{-\eta_N}_{\cA}, U^{+\eta_N}_{\cB}}( G_N ) = -\infty
	$
	for all $N$ large enough. Thus, 
	\begin{equation*}
		\begin{split}
			\liminf_{ N \rightarrow \infty }\, &\Prb_*\left[\,
			T^{q}_{U^{-\eta_N}_{\cA}, U^{+\eta_N}_{\cB}}( G_N ) < 0 \,\right]
			\geq 1\,,
	 \end{split}
	\end{equation*}
	which finishes the proof of the second claim.
	
	In order to prove the third claim, we first prove that on $\Omega_{ct}^\cA \cap \Omega_{ct}^\cB$ we have that
	\begin{equation*}
		\begin{split}
			&\textbf{(E1)}~ \forall a \in \Gamma(\cA)\,
						   \forall b \in \Gamma(\cB)\!:~
			 	\hat{\cU}_{a_q } \subseteq \cU_{a} 
			 		~ \wedge ~
			 	\hat{\cL}_{b_q} \subseteq \cL_{b}\\
			\Longrightarrow ~ ~ ~
			&\textbf{(E2)}~T^{q}_{\mu^{-1}_\cA \cup W_\cA,\mu^{-1}_\cB \cup W_\cB}(G_N) \leq 0\,,
		\end{split}
	\end{equation*}
	which is equivalent to
	\begin{equation*}
	\begin{aligned}
	&\mathbf{\neg}\textbf{(E2)}\quad\quad
	\exists s^* \in \mu^{-1}_\cA \cup W_\cA:~~\hphantom{ -}G_N(s^*) > q^+(s^*)\\
	&\hphantom{\mathbf{\neg}\textbf{(E2)}} ~\,\vee\,~
	\exists s^* \in \mu^{-1}_\cB \cup W_\cB:~ -G_N(s^*) > q^-(s^*)\\
	\Longrightarrow ~ ~
	&\mathbf{\neg}\textbf{(E1)}~
	\exists a\in \Gamma(\cA):\quad
	  	\hat{\cU}_{a_q} = \Big\{ G_N(s) > \frac{a(s) - \mu(s)}{\tau_N\sigma(s)} + q^+(s) \Big\} \not\subseteq \cU_{a}\\
		  &\quad\quad \vee\,~ \exists b\in \Gamma(\cB):\quad
		\hat{\cL}_{b_q} = \Big\{ G_N(s) < \frac{b(s) - \mu(s)}{\tau_N\sigma(s)} - q^-(s) \Big\} \not\subseteq {\cL}_{b}\,.
	\end{aligned}
	\end{equation*}	
	
	To prove this, we first assume $s^* \in \mu^{-1}_{\cA}$ such that $G_N(s^*) > q^+(s^*)$.
	Hence there is $ a \in \cA $ such that $\mu(s^*) = a(s^*)$. Thus,
	$s^* \notin \cU_{a}$, yet $G_N(s^*) > q^+(s^*)$ implies $s^* \in \hat{\cU}_{a_q }$.
	the case $s^* \in \overline{U}_{\cA} \setminus \mu^{-1}_{\cA}$.
	Thus, assume $s^* \in W_{\cA}$
	such that $G_N(s^*) > q^+(s^*)$. By the continuity assumption in the definition of $W_{\cA}$ we find an
	$s'\in \mu^{-1}_{\cA}$ such that $G_N(s') > q^+(s')$. This, again implies
	the existence of an $a \in\cA$ such that $s' \notin \cU_{a}$,
	but $s' \in \hat{\cU}_{a_q }$.	
	
	The case that $s^* \in \mu^{-1}_{\cB} \cup W_\cB$ such that $-G_N(s^*) > q^-(s^*)$
	implies $\mathbf{\neg}\textbf{(E1)}$ is almost identical to the previous argument
	and therefore omitted. Together we have proven $\textbf{(E1)} \Rightarrow \textbf{(E2)}$.
	
	Consequentially, as $\Prb^*\big( \Omega_{cont}^{W_\cA} \cap \Omega_{cont}^{W_\cB} \big) = 1$ holds, we have that
	\begin{equation*}
	\begin{split}
			&\Prb^*\Big[\,
					\forall a \in \Gamma\big( \cA \big)\, \forall b \in \Gamma\big( \cB \big):~
					 	\hat{\cU}_{a_q} \subseteq \cU_{a}
					 ~ ~ \wedge ~ ~
						 \hat{\cL}_{b_q} \subseteq \cL_{b} 
					  \,\Big]\\
			 &= \Prb^*\Big[\,
					\left\{\forall a \in \Gamma\big( \cA \big)\, \forall b \in \Gamma\big( \cB \big):~
					 	\hat{\cU}_{a_q} \subseteq \cU_{a}
					 ~ ~ \wedge ~ ~
						 \hat{\cL}_{b_q} \subseteq \cL_{b}\right\} \cap \Omega_{cont}
					  \,\Big]\\
					 &\leq \Prb^*\Big[\, \left\{ T^{q}_{\mu^{-1}_\cA \cup W_\cA,\mu^{-1}_\cB \cup W_\cB}(G_N) \leq 0 \right\} \cap  \Omega_{cont} \,\Big]\\
					 &=  \Prb^*\Big[\, T^{q}_{\mu^{-1}_\cA \cup W_\cA,\mu^{-1}_\cB \cup W_\cB}(G_N) \leq 0 \,\Big]\,.
	\end{split}
	\end{equation*}	
	Applying the limit superior to both sides and using the Portmanteau Theorem yields the claim.
\end{proof}
\begin{remark}\label{eq:WeakendUpperBoundSCoREmeta}
	As we tried to make the upper bound as tight as possible we needed to introduce the sets $W_\cA$ and $W_\cB$ where
	$G_N$ is continuous from the inside of $\mu^{-1}_\cA$ and $\mu^{-1}_\cB$ respectively.
	However, setting $W_\cA = W_\cB = \emptyset$ we always get an upper bound that at most misses points from
	$\partial \mu^{-1}_\cA \setminus \mu^{-1}_\cA$ and $\partial \mu^{-1}_\cB \setminus \mu^{-1}_\cB$.
	In particular, if $ \mu^{-1}_\cA$ and $ \mu^{-1}_\cB$ are closed it holds that $W_\cA = W_\cB = \emptyset$
	and the upper bound is tight.
\end{remark}

The proof of the SCoRE Set Metatheorem can be thought of as taking the limit of the following
non-asymptotic bounds.

\begin{proposition}\label{prop:NonAsymMetaTheorem}
	Let $ \cA, \cB \subseteq \mathcal{F}(S) $, $ q^\pm\in\mathcal{F}(S) $
	and $\eta_N < \tilde \eta$.
	Define
	$c^\pm_q = c^\pm \pm q\tau_N\sigma$ for all $c^\pm \in \Gamma\big( \cC^\pm \big)$.
	Then
		\begin{equation*}
		\begin{split}
		\Prb_*\Bigg[\,
				&\forall c^- \in \Gamma\big( \cA \big)\,
				\forall c^+ \in \Gamma\big( \cB \big):~
			 \hat{\cL}_{c_q^-} \subseteq \cL_{c^-}
			 ~ \wedge ~ ~
			 \hat{\cU}_{c_q^+} \subseteq \cU_{c^+}
			  \,\Bigg]\\
			 &\begin{cases}
			 \geq
  			 \Prb_*\Big[\,
			 		T^{q}_{U^{-\eta_N}_{\cA}, U^{+\eta_N}_{\cB}}( G_N ) < 0
 			~\wedge~
 		  				T^{q^-, q^+}_{S, S}( G_N )
 		  	 			<  \mathfrak{O}\eta_N \tau_N^{-1}
 		  	 \,\Big]\\
 		  	 \leq
  			\Prb^*\Big[\,
			 		T^{q}_{\mu^{-1}_{\cA} \cup W_{\cA},\mu^{-1}_{\cB} \cup W_{\cB}}( G_N ) \leq 0
 			\,\Big]
			 \end{cases}\,.
 	\end{split}
	\end{equation*}
\end{proposition}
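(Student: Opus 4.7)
The proposition is the non-asymptotic core that is extracted in the proof of Theorem~\ref{thm:SCoPESMetatheorem}: once the limit operations and the tightness step that converts $Z_N$ into an ignorable remainder are stripped away, what remains are exactly the two inequalities stated here. I would prove the lower and the upper bound separately, paralleling the two halves of the Metatheorem proof but stopping before any limit is taken.

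\textbf{Lower bound.} The starting point is Lemma~\ref{lem:SCoPESlemma}. Applied to each $c^-\in\Gamma(\mathcal{C}^-)$, part~(i) furnishes three sufficient algebraic conditions for $\hat{\mathcal{L}}_{c_q^-}\subseteq\mathcal{L}_{c^-}$; part~(ii) does the same for $\hat{\mathcal{U}}_{c_q^+}\subseteq\mathcal{U}_{c^+}$ for each $c^+\in\Gamma(\mathcal{C}^+)$. The key algebraic identity is
$
\mu^{-1}_{\pm\mathcal{C}^\pm_{\eta}}=\bigcup_{c^\pm\in\mathcal{C}^\pm}\mu^{-1}_{\pm\{c^\pm\}_{\eta}},
$
which collapses the family of conditions indexed by $c^\pm$ into a single bound on a $T^{q^-,q^+}$ functional over the enlarged set. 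After this collapse, the first batch of conditions reads $T^{q^-,q^+}_{U^{+\eta_N}_{\mathcal{C}^-},U^{-\eta_N}_{\mathcal{C}^+}}(G_N)<0$; the second batch, together with the third (the $Z_N$ bound in Lemma~\ref{lem:SCoPESlemma}), can be absorbed into a single requirement on $T^{q^-,q^+}_{S,S}(G_N)$, because restricting the $Z_N$ condition to the ``far-from-preimage'' region is exactly what a global sup-bound on $G_N$ supplies for free. Lemma~\ref{lem:LowerBoundInnerProb} then glues these two events under inner probability to yield the claimed lower bound.

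\textbf{Upper bound.} I would reuse the implication \textbf{(E1)}$\Rightarrow$\textbf{(E2)} established in claim~3 of Theorem~\ref{thm:SCoPESMetatheorem} verbatim. Verbally: if $T^{q^-,q^+}_{\overline U_{\mathcal{C}^-},\overline U_{\mathcal{C}^+}}(G_N)>0$ is witnessed by, say, $s^*\in\overline U_{\mathcal{C}^+}$ with $G_N(s^*)>q^+(s^*)$, then either $s^*\in\mu^{-1}_{\mathcal{C}^+}$, in which case picking $c^+\in\mathcal{C}^+$ with $\mu(s^*)=c^+(s^*)$ immediately produces $s^*\in\hat{\mathcal{U}}_{c_q^+}\setminus\mathcal{U}_{c^+}$ so the inclusion fails, or $s^*\in\overline U_{\mathcal{C}^+}\setminus\mu^{-1}_{\mathcal{C}^+}$, and the continuity clause built into the definition of $\overline U_{\mathcal{C}^+}$ lets one slide $s^*$ to a nearby $s'\in\mu^{-1}_{\mathcal{C}^+}$ where the strict inequality $G_N(s')>q^+(s')$ still holds, reducing to the previous case. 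The $\overline U_{\mathcal{C}^-}$ case is symmetric. Taking outer probabilities of the ``for all $c^\pm$'' event and applying the set inclusion established by this implication yields the upper bound.

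\textbf{Main obstacle.} The delicate step is the upper bound at touching points $\overline U_{\mathcal{C}^\pm}\setminus\mu^{-1}_{\mathcal{C}^\pm}$: promoting a violation detected on the closure to an actual inclusion failure for a concrete $c^\pm\in\mathcal{C}^\pm$ is exactly what the continuity clause of $\overline U_{\mathcal{C}^\pm}$ is designed to deliver, and preserving the direction of the strict inequality when passing to the limit $s'$ requires some care. The lower bound, by contrast, is essentially careful bookkeeping with the three conditions of Lemma~\ref{lem:SCoPESlemma} once one recognizes that a global sup-bound over $S$ subsumes both the near-preimage slack condition and the $Z_N$ condition of the Metatheorem -- which is why no hypothesis on $Z_N$ enters the non-asymptotic statement.
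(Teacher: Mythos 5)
Your proposal is correct and matches the paper's (implicit) proof, which is exactly the Metatheorem argument with the limits stripped out: Lemma \ref{lem:SCoPESlemma} plus Lemma \ref{lem:LowerBoundInnerProb} for the lower bound, and the implication \textbf{(E1)}$\Rightarrow$\textbf{(E2)} plus monotonicity of outer probability for the upper bound. The one step that genuinely needs a new justification — why no hypothesis on $Z_N$ appears, namely that for $s\in\mathcal{L}_{c^-}^\complement\setminus\mu^{-1}_{+\{c^-\}_{\eta_N}}$ one has $\mu(s)-c^-(s)>\eta_N$ so the global bound $T^{q^-,q^+}_{S,S}(G_N)$ controls both the middle and the far zone at once — you identify and argue correctly (note only that the drift term is $\eta_N/(\tau_N\sigma(s))\geq\eta_N\tau_N^{-1}\mathfrak{O}^{-1}$, so the threshold in the displayed bound should really be $\eta_N\tau_N^{-1}\mathfrak{O}^{-1}$ rather than $\mathfrak{O}\eta_N\tau_N^{-1}$; this constant slip is already present in the paper's statement and in the Metatheorem proof).
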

\begin{remark}
	 Using Lemma \ref{lem:LowerBoundInnerProb} it can be easily verified that
	 Lemma 2.1 from \cite{Mammen:2013} is the special case of the above proposition
	 with $\cC^\pm = \{0\}$.
	 The benefit of our version is that it more clearly shows that the probability of the lower
	 bound needs to be tuned by $q^\pm$ to obtain valid non-asymptotic control of the
	 inclusions and the gap in exact control is given by the difference
	 between the given upper and lower bound.
\end{remark}

\FloatBarrier
\newpage

\section{A condition for $\mathfrak{u}^{-1}_\cC = \mu^{-1}_\cC$}\label{App:frakSeqS}

In the main article we introduced the generalized preimage $\mathfrak{u}^{-1}_\cC$
which collects all points in $S$ such that either
$\big( s,\mu(s) \big)\in \Gamma(\cC)$ or $\big( s,\mu(s) \big)$ is a touching point
of $\Gamma(\cC)$ in the sense that the graph $\Gamma(\mu)$ gets arbitrary close to $\Gamma(\cC)$. 
The sharp upper bound \textbf{(A4)} in Theorem \ref{thm:MainSCoPES} holds true
if $\mathfrak{u}^{-1}_\cC = {\rm cl}\mu^{-1}_\cC$. Therefore we now discuss fairly
general conditions under which $\mathfrak{u}^{-1}_\cC = \mu^{-1}_\cC$. A key concept we
will need is the boundary of a set $\cC \subseteq \mathcal{F}(S)$. 
\begin{definition}[\textbf{Boundary of a Set of Functions}]
	A boundary $\partial\cC $ of $ \cC \subseteq \mathcal{F}(S) $ is a set
	$\mathcal{D} \subseteq  \mathcal{F}(S)$ such that
	\begin{equation*}
		\Gamma(\mathcal{D}) = \Bigg\{ (s,r) \in S \times \mathbb{R} ~\big\vert~
			r \in \partial\Bigg( \bigcup_{c\in \cC} c(s) \Bigg) \Bigg\}\,.
	\end{equation*}
	Here $\partial I$ is the topological boundary of $I\subseteq\mathbb{R}$ under
	the standard topology.
	While the boundary of $\cC$ is not a unique set, any two boundaries $\mathcal{D}$
	and $\mathcal{D}'$ satisfy that
	$\Gamma(\mathcal{D}) = \Gamma(\mathcal{D}')$.
	Therefore $\Gamma\big( \partial\cC \big)$ is a unique set.
\end{definition}
The next lemma generalizes Lemma 1 from \cite{Sommerfeld:2018CoPE} and implies
$\mathfrak{u}^{-1}_\cC = \mu^{-1}_\cC$.
\begin{lemma} \label{lem:HausdorffCompactConvergence}
    Let $\cC \subset \mathcal{F}(S)$, $\mu^{-1}_{\cC_{\tilde{\eta}}}$
    be compact for some $\tilde\eta>0$ and $\big( \eta_N \big)_{n\in \mathbb{N}}\subset\mathbb{R}$ a positive
    zero-sequence. Assume that $ \Gamma\big( \partial\cC \big) $
    is closed and the restriction of $\mu$ to
    $ {\rm cl}\,\mu^{-1}_{\cC_{\tilde{\eta}}}\setminus {\rm int}\,\mu^{-1}_\cC$
	is continuous. Then
    \begin{equation*}
    	\lim_{ N \rightarrow \infty } d_H\Big( \mu^{-1}_{\cC_{\eta_N} }, \mu^{-1}_{ \cC } \Big) = 0\,. 
    \end{equation*}
\end{lemma}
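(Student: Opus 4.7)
The plan is to argue by contradiction, exploiting compactness of $\mu^{-1}_{\mathcal{C}_{\tilde\eta}}$ together with continuity of $\mu$ on ${\rm cl}\,\mu^{-1}_{\mathcal{C}_{\tilde\eta}} \setminus {\rm int}\,\mu^{-1}_\mathcal{C}$ and closedness of $\Gamma(\partial\mathcal{C})$. Since $\mu^{-1}_\mathcal{C} \subseteq \mu^{-1}_{\mathcal{C}_{\eta_N}}$ trivially, one of the two suprema defining $d_H$ is identically zero, so only $\sup_{s \in \mu^{-1}_{\mathcal{C}_{\eta_N}}} d(s, \mu^{-1}_\mathcal{C}) \to 0$ requires proof. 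Suppose this fails: there exist $\varepsilon > 0$, a subsequence (re-indexed by $N$), and $s_N \in \mu^{-1}_{\mathcal{C}_{\eta_N}}$ with $d(s_N, \mu^{-1}_\mathcal{C}) \geq \varepsilon$.

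For $N$ large, $\eta_N < \tilde\eta$ places $s_N$ inside the compact set $\mu^{-1}_{\mathcal{C}_{\tilde\eta}}$, so after passing to a further subsequence $s_N \to s^* \in {\rm cl}\,\mu^{-1}_{\mathcal{C}_{\tilde\eta}}$. Continuity of $s\mapsto d(s, \mu^{-1}_\mathcal{C})$ preserves $d(s^*, \mu^{-1}_\mathcal{C}) \geq \varepsilon$, so $s^*\notin \mu^{-1}_\mathcal{C}$ and $s^* \notin {\rm int}\,\mu^{-1}_\mathcal{C}$; similarly $s_N \notin \mu^{-1}_\mathcal{C}$ for all $N$. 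Thus both $s_N$ and $s^*$ lie in ${\rm cl}\,\mu^{-1}_{\mathcal{C}_{\tilde\eta}} \setminus {\rm int}\,\mu^{-1}_\mathcal{C}$, where $\mu$ is assumed continuous, yielding $\mu(s_N) \to \mu(s^*)$.

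Next, because $s_N \in \mu^{-1}_{\mathcal{C}_{\eta_N}}$, pick $c_N \in \mathcal{C}$ with $|c_N(s_N) - \mu(s_N)| \leq \eta_N$. Since $s_N \notin \mu^{-1}_\mathcal{C}$, the point $\mu(s_N)$ lies in the complement of $A_N := \bigcup_{c\in\mathcal{C}} c(s_N) \subseteq \mathbb{R}$, while $c_N(s_N) \in A_N$. A one-dimensional walk along the segment from $\mu(s_N)$ to $c_N(s_N)$ must therefore cross $\partial A_N$: concretely, after WLOG reordering endpoints, $z_N := \inf\{r \in [\mu(s_N), c_N(s_N)] : r \in A_N\}$ lies in $\partial A_N$ and satisfies $|z_N - \mu(s_N)| \leq \eta_N$. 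Hence $(s_N, z_N) \in \Gamma(\partial\mathcal{C})$ and $z_N \to \mu(s^*)$, so $(s_N, z_N) \to (s^*, \mu(s^*))$. By closedness of $\Gamma(\partial\mathcal{C})$, $(s^*, \mu(s^*)) \in \Gamma(\partial\mathcal{C})$, i.e. $\mu(s^*) \in \partial A_\infty$ with $A_\infty = \bigcup_{c\in\mathcal{C}} c(s^*)$.

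The main obstacle is the last step: upgrading $\mu(s^*) \in \partial A_\infty$ to $\mu(s^*) \in A_\infty$ (equivalently $s^* \in \mu^{-1}_\mathcal{C}$), which contradicts $d(s^*, \mu^{-1}_\mathcal{C}) \geq \varepsilon$. Under the sufficient conditions listed in the Remark in Section \ref{scn:estSC} (either $\Gamma(\mathcal{C})$ itself closed, or $\Gamma(\mathcal{C}) = \Gamma(\mathcal{C}')$ for some $\mathcal{C}' \subseteq C(S)$), one has $A_\infty$ closed so $\partial A_\infty \subseteq A_\infty$ and the conclusion is immediate; in the fully general case one must additionally argue that the limit witness $d \in \partial\mathcal{C}$ with $d(s^*) = \mu(s^*)$ is actually attained by a $c \in \mathcal{C}$, which I expect to require a careful bookkeeping of the sequence $(c_N)_{N\in\mathbb{N}}$ combined with the continuity of $\mu$ near $s^*$. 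This is where the bulk of the technical work will lie.
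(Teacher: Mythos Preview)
Your approach is exactly the paper's: contradiction, extract a convergent subsequence $s_{N_k}\to s^*$ by compactness of $\mu^{-1}_{\mathcal{C}_{\tilde\eta}}$, use continuity of $\mu$ on ${\rm cl}\,\mu^{-1}_{\mathcal{C}_{\tilde\eta}}\setminus{\rm int}\,\mu^{-1}_\mathcal{C}$ to get $\mu(s_{N_k})\to\mu(s^*)$, produce points $(s_{N_k},z_{N_k})\in\Gamma(\partial\mathcal{C})$ with $z_{N_k}\to\mu(s^*)$, and invoke closedness of $\Gamma(\partial\mathcal{C})$ to place $(s^*,\mu(s^*))$ there. Your boundary--crossing argument (the ``one--dimensional walk'') actually supplies a justification the paper omits: the paper simply writes ``By construction there is a sequence $(s_{N_k},c_{N_k}(s_{N_k}))\in\Gamma(\partial\mathcal{C})$'' without explaining why the witness can be taken on the boundary rather than in the interior of $A_{s_{N_k}}$.

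Regarding the step you flag as the ``main obstacle'' --- upgrading $\mu(s^*)\in\partial A_\infty$ to $\mu(s^*)\in A_\infty$ --- the paper does \emph{not} do more than you. Its final sentence reads ``This implies $s^*\in\mu^{-1}_\mathcal{C}$, since $\Gamma(\partial\mathcal{C})$ is closed,'' which is the same assertion with no further argument; closedness of $\Gamma(\partial\mathcal{C})$ was already spent to get the limit point into $\Gamma(\partial\mathcal{C})$ and does not by itself force $\partial A_{s^*}\subseteq A_{s^*}$. So you are not missing a trick: the paper treats this step as immediate, and your observation that it is in fact clean precisely under the stronger hypotheses in the Remark (e.g.\ $\Gamma(\mathcal{C})$ closed, so each fibre $A_s$ is closed) is accurate. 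For the lemma as stated you should not expect to find substantially more bookkeeping in the paper's version.
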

\begin{proof}
        Define the set $M^\varepsilon:=\{ s\in
        S:~\inf_{s' \in \mu^{-1}_{ \cC }} d( s, s' ) <\varepsilon\}$.
        By definition $ \mu^{-1}_{ \cC } \subset \mu^{-1}_{\cC_{\eta_N} } $
        for all $ N $. Therefore convergence in Hausdorff distance of
        $ \mu^{-1}_{\cC_{\eta_N} } $ to $ \mu^{-1}_{ \cC } $
        follows, if for any $ \varepsilon > 0 $
        there exists an $ N_0 \in \mathbb{N} $ such that for all
        $ N > N_0$ it holds that $\mu^{-1}_{\cC_{\eta_N} } \subset
        M^\varepsilon$. To this end, assume the contrary. Then, there
        exists  $ \varepsilon > 0 $ such that for some subsequence
        $ (N_k)_{k\in\mathbb{N}} $ there are $ s_{ N_k } \in
        \mu^{-1}_{\cC_{\eta_{N_k}} } \setminus~ \mu^{-1}_{ \cC } $
         with $ \inf_{s\in \mu^{-1}_{ \cC }} d(s_{ N_k }, s) \geq \varepsilon $.
         Since for large enough $k$ the sequence $ (s_{ N_k } )_{k\in\mathbb{N}} $ is contained
         in the compact set $ \mu^{-1}_{\cC_{\tilde{\eta}}} $ it can w.l.o.g.
         be assumed that it converges to a limit
         $ s^* \in S $, say.
		 Assume that $ k $ is large enough such that
         $ \mu^{-1}_{\cC_{\eta_{N_k}} } \setminus~ \mu^{-1}_{ \cC } \subseteq
		 \mu^{-1}_{\cC_{\eta_{0}} }  \setminus~ \mu^{-1}_{ \cC } $.
		 Since $ \mu $ is continuous on the closed set
		 ${\rm cl}\,\mu^{-1}_{\cC_{\tilde{\eta}}}\setminus {\rm int}\,\mu^{-1}_\cC$
		 it follows that
         $ \mu(s_{N_k}) \rightarrow \mu(s^*)$ for some
		 $ s^* \in {\rm cl}\,\mu^{-1}_{\cC_{\tilde{\eta}}}\setminus {\rm int}\,\mu^{-1}_\cC$.
		 By construction there is a sequence
         $ \big( s_{N_k}, c_{N_k}(s_{N_k}) \big) \in \Gamma(\partial\cC) $ such that
         \begin{equation}\label{eq:lemhausdorfconv1}
         	\vert \mu(s_{N_k}) - c_{N_k}(s_{N_k}) \vert
         			\leq \eta_{N_k} \xrightarrow{k\rightarrow\infty}0.
         \end{equation}
         Thus, $ \lim_{ k \rightarrow \infty } c_{N_k}(s_{N_k}) = \mu(s^*)$. Since
         $\Gamma\big( \partial\cC \big) $ is closed and
         $ \big(s_{N_k}, c_{N_k}(s_{N_k}) \big) \in \Gamma\big( \partial\cC \big) $
         for all $ k \in \mathbb{N} $, its limit
         $ \big( s^*, \mu(s^*) \big) $ is contained in $\Gamma\big( \partial\cC \big) $.
         By \eqref{eq:lemhausdorfconv1} it follows that
         $ \big( s^*, \mu(s^*) \big) \in \Gamma( \partial\cC) \cap \Gamma(\mu) $.
         This implies $ s^* \in \mu^{-1}_\cC $,
         since $\Gamma( \partial\cC)$ is closed. A contradiction.
\end{proof}
\begin{remark}
	The importance of $\Gamma( \partial\cC)$ being closed is visualized in Fig.
	\ref{fig:GlosedGammaC} which gives an
	example for $\cC = \{c\}$ where $c$ is discontinuous at $s_0\in S$.
	The problem here is that $\Gamma(\mu)$ intersects
	$\Gamma\big(\partial\{c\} \big) = \Gamma(c)$ at a point which does
	not belong to $\Gamma(c)$. This situation can be circumvented by finding a
	set $\tilde{\cC}$ with
	$\Gamma\big( \tilde{\cC} \big) = {\rm cl}\big( \Gamma(\cC) \big)$.
	This means adding functions such that their graph may pass through the boundary points
	of $\Gamma(\cC)$ while otherwise being contained in
	$\Gamma\big( \cC \big)$. This is illustrated in the right panel of Fig. \ref{fig:GlosedGammaC}.
	\begin{figure}[b!]
		\begin{center}
			\includegraphics[width=0.49\textwidth]{\figurepath 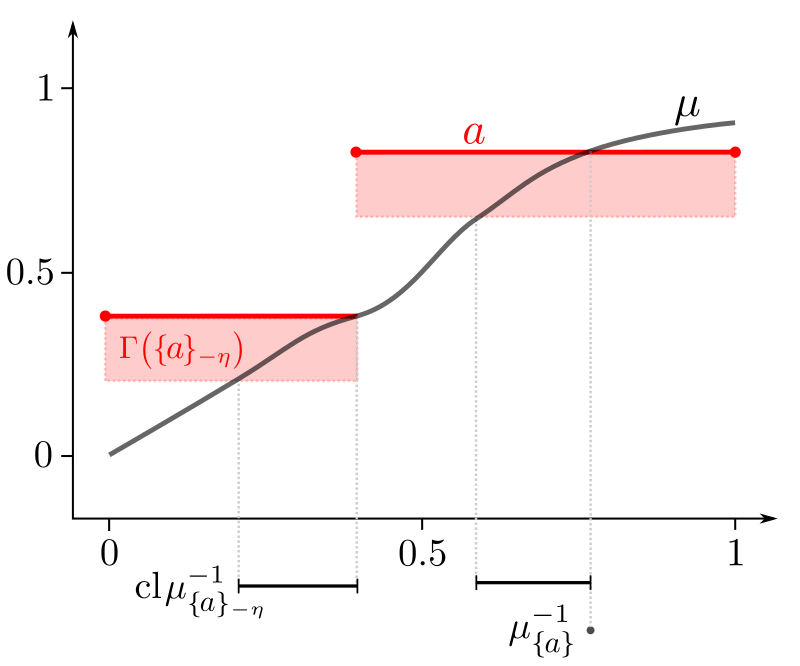}
			\includegraphics[width=0.49\textwidth]{\figurepath 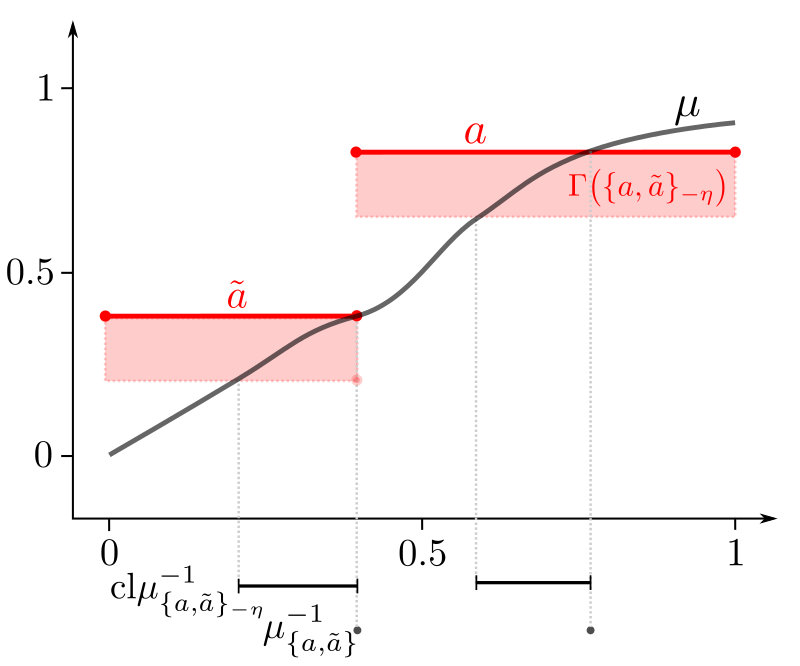}
		\end{center}
		\caption{Illustration of the assumption that $\Gamma(\partial\cC)$ needs to be closed
				for Hausdorff convergence in Lemma \ref{lem:HausdorffCompactConvergence}. \textit{Left:} 
			    Since $a$ is only right continuous at $s_0\approx 0.4$ and $\mu(s_0) = \lim_{s\rightarrow -s_0}a(s)$
			    it holds that ${\rm cl}\mu^{-1}_{a_{-\eta}}$ does not converge in Hausdorff distance
			    to $\mu^{-1}_a$.
			    \textit{Right:}
			    If $\cA=\{a\}$ is extended to include the function $\tilde a$
			    which satisfies $\tilde a(s) = a(s)$ for all $s\in S\setminus\{s_0\}$
			    and $\tilde a(s_0) =\lim_{s\rightarrow -s_0}a(s)$, then ${\rm cl}\mu^{-1}_{\{a,\tilde a\}_{-\eta}}$
			    does converge to $\mu^{-1}_{\{a,\tilde a\}}$ in Hausdorff-distance.
			    }\label{fig:GlosedGammaC}
	\end{figure}
\end{remark}

\FloatBarrier
\newpage

\section{The Difference between $">"$ or $"\geq"$ Excursion Sets}\label{scn:ChoiceInclusion}
To simplify notations let us define $B^\complement = S \setminus B$ for all $B\subseteq S$.
This section explains why the inclusions
$
	\hat{\cU}_{a+q\tau_N\sigma} \subseteq \cU_{a}
$
and
$
	\hat{\cL}_{b-q\tau_N\sigma} \subseteq \cL_{b}
$
are more natural than
$
	\hat{\cL}_{a+q\tau_N\sigma}^\complement \subseteq \cL_{a}^\complement
$
and
$
	\hat{\cU}_{b-q\tau_N\sigma}^\complement \subseteq \cU_{b}^\complement
$
which are used in the literature. For example,
\cite{Sommerfeld:2018CoPE, Bowring:2019, Bowring:2021,Maullin:2022} assume
$a(s) = b(s) = c \in \mathbb{R}$ for all $s\in S$ and study the probability of the inclusions
$
	\hat{\cL}_{c+q\tau_N\sigma}^\complement
	\subseteq \cL_c^\complement
	\subseteq \hat{\cL}_{c-q\tau_N\sigma}^\complement
$
.
The main issue with excursion sets using "$\geq$" instead of "$>$" is
that an "open ball" or "non-tangentiality" condition is required
to prove sharp upper bounds.
This condition is restrictive since it means that $\mu$ cannot
be flat on $\Gamma\big( \partial \cC \big)$
as illustrated in Fig. \ref{fig:Cope_Def_Problem}.
Because this section only serves an illustrative purpose, we simplify the
proof by assuming $\mathfrak{u}^{-1}_\cC = \mu^{-1}_\cC$.
This means that $\mu^{-1}_\cC \subseteq S$ is closed.
Furthermore, we require the following assumptions:
\begin{itemize}[leftmargin=1.4cm]
	\item[\textbf{(A2')}] There exist an $\tilde{\eta}>0$ such that the
						   restriction of $G_N$  to $ \mu^{-1}_{\cC_{\tilde{\eta}}} $ has
						   almost surely continuous sample paths.
	\item[\textbf{(A5)}] Assume that for all open
	$\mathcal{O}_s \subset S$ containing
	$ s \in \mu^{-1}_{\partial\cC^-} \cap \mu^{-1}_{\cC^-} $
	there is a $c^- \in\cC^-$ such that $\mathcal{O}_s \cap \cU_{c^-} \neq \emptyset$
	and for all  open $\mathcal{O}_s \subset S$ containing
	$ s \in \mu^{-1}_{\partial\cC^+} \cap \mu^{-1}_{\cC^+} $
	there is a $c^+ \in\cC^+$ such that $\mathcal{O}_s \cap \cL_{c^+} \neq \emptyset$.
\end{itemize}
\begin{remark}
	Condition \textbf{(A2')} is stronger than Condition
	\textbf{(A2)}, since the latter only required continuity from outside
	$\mu^{-1}_\cC$ towards $\mu^{-1}_{\partial\cC}$ of $G_N$
	while \textbf{(A2')} requires additionally the continuity from the inside
	$\mu^{-1}_\cC$ towards $\mu^{-1}_{\partial\cC}$.
\end{remark}
\begin{remark}
	If $ \cC = \{ c \} $ for $ c \in \mathcal{F}(S) $ then
	$\mu^{-1}_{\partial\cC} = \mu^{-1}_{\cC} = \big\{ s\in S:~ \mu(s) = c(s) \big\}$
	and \textbf{(A5)} is equivalent to the open ball condition of Assumption 2.1.(a)
	from \cite{Sommerfeld:2018CoPE}.  
\end{remark}
\begin{figure}[b!]
			\includegraphics[width=0.49\textwidth]{\figurepath 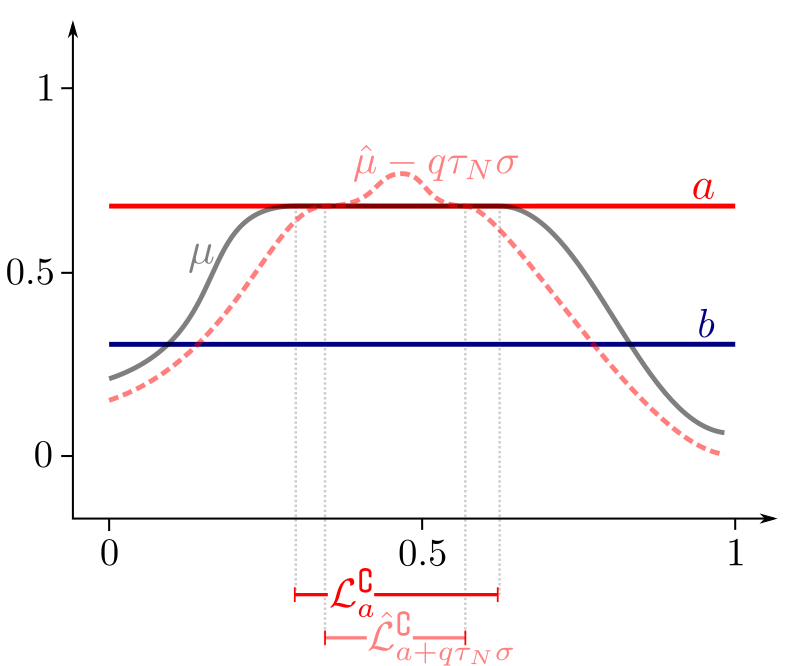}
			\includegraphics[width=0.49\textwidth]{\figurepath 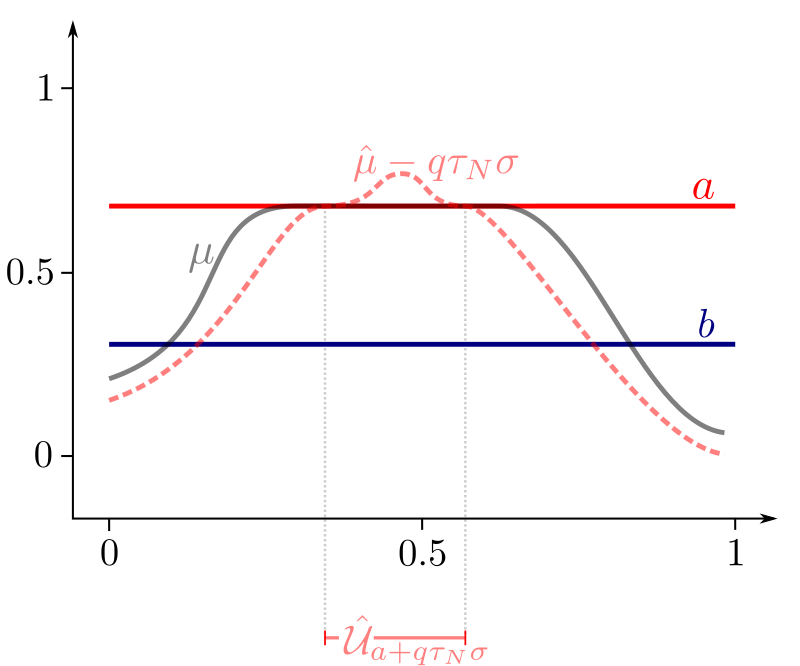}
		\caption{Illustration that
				 $
				 S \setminus \hat{\cL}_{a+q\tau_N\sigma} \subseteq S \setminus \cL_{a}
				 $ requires more assumptions to obtain a sharp upper bound than
				 $
				 \hat{\cU}_{a+q\tau_N\sigma} \subseteq \cU_{a}
				 $.
				 \textit{Left:} the inclusion $
				 S \setminus \hat{\cL}_{a+q\tau_N\sigma} \subseteq S \setminus \cL_{a}
				 $ does hold true although $\hat\mu_N - q\tau_N\sigma > \mu$ near $s\approx 0.5$.
				 \textit{Right:} the problem does not appear because $\cU_a=\emptyset $ and
				 therefore $\hat{\cU}_{a + q \tau_N \sigma} \not\subseteq \cU_{a}$.
				 }\label{fig:Cope_Def_Problem}
\end{figure}
\begin{theorem}
\label{thm:SimCoPEsetOld}
Let $ \cA,\cB \subseteq \mathcal{F}(S) $ and assume {\rm\textbf{(A1)}}-{\rm\textbf{(A3)}}.
Then
\begin{equation*}
	\liminf_{N \rightarrow \infty}
		\Prb_* \left[\, \forall a \in \cA\,\forall b \in \cB:~
		\hat{\cL}_{a + q\tau_N\sigma}^\complement \subseteq \cL_{a}^\complement
		~\wedge~
		\hat{\cU}_{b - q\tau_N\sigma}^\complement \subseteq \cU_{b}^\complement
	 \,\right]
		 \geq \Prb\left[\,  \mathfrak{T}_{\cA, \cB}(G) < q \,\right]\,. 
\end{equation*}
If also either $ \mu^{-1}_{\partial\cA} \cap \mu^{-1}_\cA = \emptyset $ and
$ \mu^{-1}_{\partial\cB} \cap \mu^{-1}_\cB = \emptyset $ or else \textbf{(A2')}
and {\rm\textbf{(A5)}}, then
\begin{equation*}
	\limsup_{N \rightarrow \infty}
		\Prb_* \left[\, \forall a \in \cA\,\forall b \in \cB:~
		\hat{\cL}_{a + q\tau_N\sigma}^\complement \subseteq \cL_{a}^\complement
		~\wedge~
		\hat{\cU}_{b - q\tau_N\sigma}^\complement \subseteq \cU_{b}^\complement
	 \,\right]
		 \leq \Prb\left[\,  \mathfrak{T}_{\cA, \cB}(G) \leq q \,\right]\,.
\end{equation*}
\end{theorem}
\begin{proof}
	The proof of the lower bound is the same as the corresponding proof in
	Theorem \ref{thm:MainSCoPES} and therefore omitted.
	The proof of the upper bound is similar to the the proof of the upper
	bound in the SCoRE Set Metatheorem \ref{thm:SCoPESMetatheorem}.
	The main difference is that we need to prove 
	\begin{equation*}
	\begin{aligned}
	&\textbf{(E1)}~\forall a \in \cA\,\forall b \in \cB:~
	\big\{ \hat{\mu}_{N}(s) + q\tau_{N} \sigma(s) \leq b(s) \big\}
						\subseteq  \big\{ \mu(s) \leq b(s) \big\}\\
	&\quad\quad\quad\quad\quad\quad\quad\quad\quad\quad\quad\wedge \big\{ \hat{\mu}_{N}(s)-a\tau_{N} \sigma(s)\geq a(s) \big\}
	 					\subseteq \big\{ \mu(s) \geq a(s) \big\}\\
	\Longrightarrow ~ ~ ~
	&\textbf{(E2)}~\forall s \in  \mu^{-1}_{\cA}\,\forall s' \in  \mu^{-1}_{\cB}:~\hat{\mu}_{N}(s)-q\tau_{N} \sigma(s)
				\leq \mu(s)\\
				 &\quad\quad\quad\quad\quad\quad\quad\quad\quad\quad\quad\quad\quad\quad\wedge~ \mu(s') \leq \hat{\mu}_{N}(s')+q\tau_{N} \sigma(s')
	\end{aligned}
	\end{equation*}
	for the subset of $\Omega$ where $\hat\mu_N$ has continuous sample paths on
	$\mu^{-1}_{\cC_{\tilde{\eta}}}$.
	
	Assume $ \textbf{(E1)} $ holds, but not $ \textbf{(E2)} $.
	W.l.o.g. assume there exist an $ s^*\in \mu^{-1}_{\cA} $
	such that
	$ \hat{\mu}_{N}(s^*) - q \tau_{N} \sigma(s^*) \geq \mu(s^*) $, i.e., $G_N(s^*) \geq q$.
	First we assume $ s^* \in \mu^{-1}_{\cA} \setminus \mu^{-1}_{\partial\cA}$.
	Since $ \mu^{-1}_{\cA} \setminus \mu^{-1}_{\partial\cA} $ is
	open, there exists $ a \in \cA $ such that $ a(s^*) \in \big(
	\mu(s^*), \hat{\mu}_{N}(s^*)-q\tau_{N} \sigma(s^*) \big) $.
	Thus, $ s^* \notin \cL_{a}^\complement $, but
	$ \hat{\mu}_{N}(s^* )-q\tau_{N} \sigma(s^* )\geq a(s^*)$,
	which contradicts $ \textbf{(E2)} $.	
	On the other hand, for $ s^* \in \mu^{-1}_{\partial\cA} \cap \mu^{-1}_{\cA} $,
	choose $\mathcal{O}_{s^*}$ small enough such that $G_N(s) \geq q$ for all $s \in \mathcal{O}_{s^*}$.
	This is possible by \textbf{(A2')}. Then \textbf{(A5)} guarantees the existence of $a$
	such that 
	$
 		s' \in \mathcal{O}_{s^*} \cap \cL_{a}
 	$. Thus, $s' \notin \cL_{a}^\complement$, but $G_N(s') \geq q$. A contradiction.	
	Similarly, the case $ s^*\in \mu^{-1}_{\cB} $ can be treated. Hence as in the proof of
	Theorem \ref{thm:SCoPESMetatheorem} an application of the Portmanteau
	Theorem finishes the proof.
\end{proof}

\FloatBarrier
\newpage
\section{Proofs of the Results in Section \ref{scn:SimCoPE}}

\subsection{Proof of Theorem \ref{thm:MainSCoPES}}
\begin{proof}
	We want to apply the SCoRE Set Metatheorem \ref{thm:SCoPESMetatheorem} in the case that $q^\pm\equiv q$.
	By {\rm\textbf{(A1)}}-{\rm\textbf{(A3)}} we obtain from Lemma \ref{lem:Tweak}
	\begin{equation*}
	\begin{split}
		T^{q,q}_{\cA_{-\eta_N}, \cB_{+\eta_N}}( G_N )
		 	\rightsquigarrow&
		\mathfrak{T}_{\cA,\cB}( G ) - q\,,\quad
		T^{q,q}_{\cA, \cB}( G_N )
		 	\rightsquigarrow
		T^{q,q}_{\mathcal{\cA}, \cB}( G )\\
		&T^{q,q}_{\cA_{-\tilde\eta},\cB_{+\tilde\eta}}( G_N )
		 	\rightsquigarrow
		T^{q,q}_{\cA_{-\tilde\eta},\cB_{+\tilde\eta}}( G )\,.
	\end{split}
	\end{equation*}
	Therefore {\rm\textbf{(M1)}} is satisfied.
	Condition {\rm\textbf{(M2)}} holds since the random variable
	$T^{q,q}_{\cA_{-\tilde\eta}, \cB_{+\tilde\eta}}( G )$ on $\mathbb{R}$ is tight which
	is equivalent to $T^{q,q}_{\cA_{-\tilde\eta}, \cB_{+\tilde\eta}}( G_N )$
	being asymptotically tight \cite[Lemma 1.3.8 ]{Vaart:1996weak}. Assumption {\rm\textbf{(M3)}} is
	part of Assumption {\rm\textbf{(A1)}}.
\end{proof}

\subsection{Proof of Theorem \ref{thm:SCest}}
We first establish that the estimate $\hat{\mathfrak{u}}_\cC^{\pm}$ is with inner
probability tending to one inside the set ${\rm cl}\,\mu^{-1}_{\cC_{\pm\tilde{\eta}}}$.
\begin{lemma}\label{lem:InclusionEstimator}
	Assume $\hat\mu_N$ satisfies Definition \ref{def:SuffWeakConv}(ii) for some $\tilde{\eta}>0$.
	Let $(k_N)_{N\in\mathbb{N}}$ be a positive sequence such that $\lim_{N\rightarrow\infty}k_N\tau_N = 0$.
	Then
	\begin{equation*}
		\liminf_{N\rightarrow\infty} \Prb_*\Big[\,
						\hat{\mathfrak{u}}_{\cC}^{\pm}
							\subseteq
						{\rm cl}\,\mu^{-1}_{\cC_{\pm\tilde{\eta}}}
					\,\Big] = 1\,.
	\end{equation*}
\end{lemma}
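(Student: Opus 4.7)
My plan is to reduce the statement to the quantitative sign inequality in Definition~\ref{def:SuffWeakConv}(ii) and then play the rate condition $k_N\tau_N\to 0$ against the asymptotic tightness of $\inf_{s\in S}\tau_N^{-1}Z_N(s)$. As a first reduction, observe that $\hat{\mathfrak{u}}_{+\mathcal{C}}^{-1}\cup\hat{\mathfrak{u}}_{-\mathcal{C}}^{-1}=\hat{\mathfrak{u}}_{\mathcal{C}}^{-1}$, so each $\hat{\mathfrak{u}}_{\pm\mathcal{C}}^{-1}$ is a subset of $\hat{\mathfrak{u}}_{\mathcal{C}}^{-1}$ and both equalities in the statement reduce to
\[
\liminf_{N\to\infty}\mathbb{P}_*\big[\,\hat{\mathfrak{u}}_{\mathcal{C}}^{-1}\subseteq{\rm cl}\,\mu^{-1}_{\mathcal{C}_{\tilde{\eta}}}\,\big]=1.
\]
Since ${\rm cl}\,\mu^{-1}_{\mathcal{C}_{\tilde{\eta}}}$ is closed and closure is monotone, it further suffices to show that the \emph{raw} set
\[
A_N:=\big\{\,s\in S~\big\vert~\exists c\in\mathcal{C}:~|\hat\mu_N(s)-c(s)|\leq k_N\tau_N\sigma(s)\,\big\}
\]
is with inner probability tending to one already contained in $\mu^{-1}_{\mathcal{C}_{\tilde{\eta}}}$; then $\hat{\mathfrak{u}}_{\mathcal{C}}^{-1}={\rm cl}\,A_N\subseteq{\rm cl}\,\mu^{-1}_{\mathcal{C}_{\tilde{\eta}}}$ follows automatically.

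Next I would suppose a bad point $s_0\in A_N\setminus\mu^{-1}_{\mathcal{C}_{\tilde{\eta}}}$ exists with corresponding witness $c\in\mathcal{C}$, and combine the two resulting inequalities. From the definition of $A_N$ one has the upper bound $|\hat\mu_N(s_0)-c(s_0)|\leq k_N\tau_N\sigma(s_0)$, while Definition~\ref{def:SuffWeakConv}(ii), applied at $s_0\in S\setminus\mu^{-1}_{\mathcal{C}_{\tilde{\eta}}}$, yields the matching lower bound
\[
|\hat\mu_N(s_0)-c(s_0)|\;\geq\;(\hat\mu_N(s_0)-c(s_0))\cdot{\rm sgn}(\mu(s_0)-c(s_0))\;\geq\;\sigma(s_0)\big(K+Z_N(s_0)\big).
\]
Cancelling the strictly positive factor $\sigma(s_0)$ and rearranging forces
\[
\tau_N^{-1}Z_N(s_0)\;\leq\;k_N-K\tau_N^{-1}\;=\;\tau_N^{-1}\big(k_N\tau_N-K\big),
\]
and since $k_N\tau_N\to 0$ and $K>0$, the right-hand side tends to $-\infty$. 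So the existence of such an $s_0$ forces $\inf_{s\in S}\tau_N^{-1}Z_N(s)$ to drop below any prescribed threshold.

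Finally I would formalise the contrapositive. Given $\epsilon>0$, asymptotic tightness supplies $M>0$ with $\limsup_N\mathbb{P}^*\big[\inf_{s\in S}\tau_N^{-1}Z_N(s)\leq-M\big]<\epsilon$. For all $N$ large enough that $k_N-K\tau_N^{-1}\leq-M$, the event $\{A_N\not\subseteq\mu^{-1}_{\mathcal{C}_{\tilde{\eta}}}\}$ is, up to the negligible failure set of the Definition~\ref{def:SuffWeakConv}(ii) inequality, contained in $\{\inf_{s\in S}\tau_N^{-1}Z_N(s)\leq-M\}$, so its outer probability is eventually below $\epsilon$. Since $\epsilon$ was arbitrary, $\mathbb{P}^*\big[A_N\not\subseteq\mu^{-1}_{\mathcal{C}_{\tilde{\eta}}}\big]\to 0$ and the complementary inner probability tends to one, as required. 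I expect no real obstacle beyond some bookkeeping in passing from the pointwise a.s.\ form of Definition~\ref{def:SuffWeakConv}(ii) to a statement that is uniform in $s$ and $c$; this bookkeeping is already carried out in Remark~\ref{rmk:SignInequal} and transfers verbatim to the present argument.
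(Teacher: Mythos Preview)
Your proposal is correct and follows essentially the same route as the paper: reduce to the unclosed set $A_N=\hat\mu_\mathcal{C}^{-1}$, combine the estimator bound $|\hat\mu_N(s_0)-c(s_0)|\leq k_N\tau_N\sigma(s_0)$ with the sign inequality from Definition~\ref{def:SuffWeakConv}(ii), and conclude via asymptotic tightness of $\inf_{s\in S}\tau_N^{-1}Z_N(s)$. One small difference worth noting: the paper splits into the two cases ${\rm sgn}(\hat\mu_N(s)-c(s))=\pm{\rm sgn}(\mu(s)-c(s))$, handling the mismatch case by invoking Remark~\ref{rmk:SignInequal}, whereas your direct use of the trivial bound $|x|\geq x\cdot{\rm sgn}(y)$ absorbs both cases at once and yields the cleaner inequality $K+Z_N(s_0)\leq k_N\tau_N$ without any case distinction; this is a genuine (if minor) simplification.
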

\begin{proof}
	Define
	$
		{\rm sgn}_{c,f}(s) = {\rm sgn}\big( f(s) - c(s) \big)
	$
	for $ f \in \mathcal{F}(S) $ and the set
	\begin{equation}\label{eq:estSC_approx}
		\hat{\mu}_\cC^{\pm}
			= \big\{\, s\in S~ \big\vert~ \exists c \in \cC:~
					0 \leq \mp \big( \hat\mu_N(s) - c(s) \big) \leq k_N\tau_N\sigma(s) \,\big\}\,.
	\end{equation}	
	 Note that ${\rm cl}\hat{\mu}_\cC^{\pm} \subseteq  \hat{\mathfrak{u}}_\cC^{\pm} $.
	 Assume that
	 $
	 s \in \hat{\mu}_\cC^{\pm} \cap
	 \big( S \setminus \mu^{-1}_{\cC_{\pm\tilde{\eta}}} \big)
	 $.
	 From Definition \ref{def:SuffWeakConv}(ii) and \eqref{eq:estSC_approx} we obtain	 
	\begin{equation*}
	\begin{split}
		\mathfrak{o}\big(\, K + Z_N(s) \,\big)
		\leq {\rm sgn}_{c,\mu}(s)\big( \hat\mu_N(s) - c(s) \big)
		\leq k_N\tau_N\sigma(s)\,,
	\end{split}
	\end{equation*}
	which implies
	$
		Z_N(s) \leq \tau_Nk_N\mathfrak{O}\mathfrak{o}^{-1} -  K
	$.
	Since $\inf_{s\in S} \tau_N^{-1}Z_N(s) $ is asymptotically
	tight and $\lim \tau_Nk_N = 0$ it holds that
	\begin{equation*}
		\limsup_{N\rightarrow\infty}\Prb^* \left[\,
					Z_N(s) \leq \tau_Nk_N\mathfrak{O}\mathfrak{o}^{-1} -  K
				\,\right]
		 = 0
	\end{equation*}
	combining this with Remark \ref{rmk:SignInequal} shows that
	\begin{equation*}
	\begin{split}
		\limsup_{N\rightarrow\infty}\Prb^*\Big[\,
						\hat{\mu}_\cC^{\pm} 
							\cap
						\big( S \setminus \mu^{-1}_{\cC_{\pm\tilde{\eta}}} \big)
						\neq \emptyset
					\,\Big]
				= 0
		&~~\Leftrightarrow ~~
		\liminf_{N\rightarrow\infty}\Prb_*\Big[\, \hat{\mu}_\cC^{\pm} \subseteq \mu^{-1}_{\cC_{\pm\tilde{\eta}}} \,\Big]
				= 1\\
		&~~\Leftrightarrow ~~
		\liminf_{N\rightarrow\infty}\Prb_*\Big[\, \hat{\mathfrak{u}}_\cC^{\pm} \subseteq {\rm cl}\,\mu^{-1}_{\cC_{\pm\tilde{\eta}}} \,\Big]
				= 1
	\end{split}
	\end{equation*}
	which is the claim.
\end{proof}

Using the above Lemma we can prove Theorem \ref{thm:SCest}. The idea of the proof is inspired by
the proof of Theorem 3.6 from \cite{Dette:2020functional}.
\begin{proof}
	We begin with proving the Hausdorff convergence of the estimator
	$\hat{\mathfrak{u}}^{\pm}_\cC$ of $\mathfrak{u}^{\pm}_\cC$.
	To begin with let $\eta_N = \rho\mathfrak{o}k_N\tau_N$ for some $ \rho \in (0,1) $
	and recall the definition of
	$\hat{\mu}_\cC^{\pm}$ from \eqref{eq:estSC_approx}. Assuming $N$ large enough such that
	$\eta_N < \tilde\eta$ we obtain
	\begin{equation*}
	\begin{split}
		\Prb_*\Big[\,
					\mu^{-1}_{\cC_{\pm\eta_N} } \subseteq \hat{\mu}_\cC^{\pm}
					\,\Big]
			&= \Prb_*\Bigg[\,
					\sup_{s \in \mu^{-1}_{\cC_{\pm\eta_N}}} \inf_{c \in \cC}
						\max\left\{0,\,
						\mp \frac{\hat\mu_N(s) - c(s) }{\sigma(s)} \right\}\leq k_N\tau_N
						\,\Bigg] \\
			&= \Prb_*\Bigg[\,
						\sup_{s \in \mu^{-1}_{\cC_{\pm\eta_N}}} \inf_{c \in \cC}
						\max\left\{0,\,
						\mp \frac{ \hat\mu_N(s) - \mu(s) }{\sigma(s)} \mp   \frac{\mu(s) - c(s)}{\sigma(s)}
						 \right\} \leq k_N\tau_N
						\,\Bigg] \\
			&\geq \Prb_*\Bigg[\, \sup_{s \in \mu^{-1}_{\cC_{\pm\eta_N}}} \inf_{c \in \cC}
			\max\left\{0,\,
					\mp \frac{\hat\mu_N(s) - \mu(s) }{\sigma(s)} + \rho k_N\tau_N
					\right\} \leq k_N\tau_N \,\Bigg] \\
			&= \Prb_*\Bigg[\, \sup_{s \in \mu^{-1}_{\cC_{\pm\eta_N}}} \vert G_N(s) \vert  \leq k_N (1-\rho) \,\Bigg] \\
			&\geq \Prb_*\Bigg[\, \sup_{s \in {\rm cl}\,\mu^{-1}_{ \cC_{\pm\tilde{\eta}} } } \vert G_N(s) \vert \leq k_N (1-\rho) \,\Bigg]\,.
	\end{split}
	\end{equation*}
	The latter converges to one since
	$\sup_{s \in {\rm cl}\,\mu^{-1}_{ \cC_{\pm\tilde{\eta}} } } \vert G_N(s) \vert $
	is asymptotically tight. Therefore we have
	$\liminf_{N\rightarrow\infty}\Prb_*\big[\, {\rm cl}\,\mu^{-1}_{\cC_{\pm\eta_N}} \subseteq \hat{\mathfrak{u}}_\cC^{\pm} \,\big]= 1$ and thus
	\begin{equation*}
	\begin{split}
	\limsup_{N\rightarrow\infty}
		\Prb^*\Bigg[ \sup_{s \in \mathfrak{u}^{\pm}_\cC} \inf_{s' \in \hat{\mathfrak{u}}_\cC^{\pm}} \vert s - s' \vert > \epsilon \Bigg]
	\leq \limsup_{N\rightarrow\infty}
		\Prb^*\Bigg[ \sup_{s \in {\rm cl}\,\mu^{-1}_{\cC_{\pm\eta_N}}} \inf_{s' \in \hat{\mathfrak{u}}_\cC^{\pm}} \vert s - s' \vert > \epsilon \Bigg]
		= 0\,.
	\end{split}
	\end{equation*}

	It remains to prove that
	\begin{equation}\label{eq:222a}
	\begin{split}
	\limsup_{N\rightarrow\infty}
		\Prb^*\Bigg[ \sup_{s \in \hat{\mathfrak{u}}_\cC^{\pm}} \inf_{s' \in \mathfrak{u}^{\pm}_\cC} \vert s - s' \vert > \epsilon \Bigg]	= 0\,.
	\end{split}
	\end{equation}
	
	To see this note that whenever
	$
	\hat{\mathfrak{u}}_\cC^{\pm} \subseteq {\rm cl}\,\mu^{-1}_{\cC_{\pm\tilde{\eta}}}
	$
	it holds that
	\begin{equation*}
	\begin{split}
		\sup_{s\in \hat{\mu}_\cC^{\pm}} \inf_{c \in \cC} \max\left\{0, \mp \big( \mu(s) - c(s) \big) \right\} 
			&\leq \sup_{s\in \hat{\mu}_\cC^{\pm}} \inf_{c \in \cC} \big\vert \mu(s) - \hat\mu_N(s) \big\vert \mp \big(  \hat\mu_N(s) - c(s) \big)  \\
			&\leq \sup_{s\in \hat{\mu}_\cC^{\pm}}\vert \mu(s) - \hat\mu_N(s) \vert + k_N\tau_N\mathfrak{O}  \\
			&\leq \sup_{s\in {\rm cl}\,\mu^{-1}_{\cC_{\pm\tilde{\eta}}}}\vert \mu(s) - \hat\mu_N(s) \vert + k_N\tau_N\mathfrak{O}
	\end{split}
	\end{equation*}
	which implies by Lemma \ref{lem:InclusionEstimator} that
	\begin{equation*}
	\begin{split}
		\liminf_{N\rightarrow\infty} \Prb_*\Bigg[
					&\sup_{s\in \hat{\mu}_\cC^{\pm}}
					\inf_{c \in \cC} 
					\max\left\{0, \mp \big(\mu(s) - c(s) \big)\right\} \leq 2k_N\tau_N\mathfrak{O}
				\Bigg]\\
			&\geq \liminf_{N\rightarrow\infty}\Prb_*\Bigg[ \sup_{s\in {\rm cl}\,\mu^{-1}_{\cC_{\pm\tilde{\eta}}}}\vert \mu(s) - \hat\mu_N(s) \vert \leq k_N\tau_N\mathfrak{O} \Bigg] \\
			&\geq \liminf_{N\rightarrow\infty}\Prb_*\Bigg[ \sup_{s\in \mu^{-1}_{\cC_{\pm\tilde{\eta}}}}\vert G_N(s) \vert \leq k_N \Bigg] = 1\,.
	\end{split}
	\end{equation*}
	Thus,
	$
	\sup_{s\in \hat{\mu}_\cC^{-1}} \inf_{c \in \cC} \max\left\{0, \mp \big(\mu(s) - c(s) \big) \right\}
	$
	converges to zero outer almost surely. It remains to show that
	this implies that
	$
	\sup_{s' \in \hat{\mathfrak{u}}_\cC^{\pm}} \inf_{s \in \mathfrak{u}^{\pm}_\cC} \vert s - s' \vert \rightarrow 0
	$
	outer almost surely as the the result of the Theorem then follows from \cite{Vaart:1996weak}.
	
	Let us assume that $
	\sup_{s\in \hat{\mu}_\cC^{\pm}} \inf_{c \in \cC} \vert \mu(s) - c(s) \vert
	$
	converges to zero outer almost surely. By Egorov's Theorem
	(Lemma 1.9.2(iii) from \cite{Vaart:1996weak})
	this is equivalent to
	\begin{equation}\label{eq:E1SCest}
	\begin{split}
		\forall \delta > 0\, \exists A\subseteq \Omega\,\forall\varepsilon>0\,
		&\forall\omega\in A\,\exists N'\, \forall
		N > N':\\
		&\Prb(A) \geq 1-\delta ~\wedge~ \sup_{s\in \hat{\mu}_\cC^{\pm}(\omega,N)} \inf_{c \in \cC} \vert \mu(s) - c(s) \vert < \epsilon
	\end{split}
	\end{equation}
	and assume that $
	\sup_{s' \in \hat{\mathfrak{u}}_\cC^{\pm}} \inf_{s \in \mathfrak{u}^{\pm}_\cC} \vert s - s' \vert \rightarrow 0
	$
	is not converging outer almost surely, i.e.,
	\begin{equation}\label{eq:E2SCest}
	\begin{split}
		\exists \delta > 0\, \forall A \subseteq \Omega\, \exists\varepsilon_0>0\,
		&\exists\omega_0\in A\,\forall N'\, \exists
		N > N':\\
		&\Prb(A) \geq 1-\delta ~\wedge~
		\sup_{s' \in \hat{\mathfrak{u}}_\cC^{\pm}(\omega_0,N)}
		\inf_{s \in \mathfrak{u}^{\pm}_\cC} \vert s - s' \vert \geq \epsilon_0\,.
	\end{split}
	\end{equation}	
	For clarity we explicitly added the dependence on $\omega$, $\omega_0$ and $N$ here.	
	
	Let $\delta$ from \eqref{eq:E2SCest}. For this $\delta$	 we choose the corresponding
	$A$ from \eqref{eq:E1SCest}. Hence by  \eqref{eq:E2SCest} there is a $\varepsilon_0$
	and $\omega_0\in A$ such that for all $N'>0$  there is an $N>N'$ satisfying
	\begin{equation}\label{eq:SCest3}
		\sup_{s' \in \hat{\mathfrak{u}}_\cC^{\pm}(\omega_0, N)}\inf_{s \in \mathfrak{u}^{\pm}_\cC} \vert s - s' \vert \geq \epsilon_0\,.
	\end{equation}
	
	We will now construct a contradiction between \eqref{eq:E1SCest} and \eqref{eq:E2SCest}.	
	From \eqref{eq:SCest3} we obtain a sequence
	$ (s_{N'})_{{N'}\in\mathbb{N}} \subset S $ satisfying, for some $N>N'$,
	\begin{equation*}
		s_{N'} \in \hat{\mathfrak{u}}_\cC^{\pm}(\omega_0, N)\,,\quad \quad \inf_{s \in \mathfrak{u}^{\pm}_\cC} \vert s - s_{N'} \vert \geq \epsilon_0\,.
	\end{equation*}	
	Let us define the set
	$
	\tilde\cN = \big\{ N \in \mathbb{N}~\vert~\exists N'\in\mathbb{N}:\,s_{N'} \in \hat{\mathfrak{u}}_\cC^{\pm}(\omega_0, N) \big\}
	$. As $S$ is compact, we can by replacing $(s_N)_{N\in\mathbb{N}}$ by a convergent subsequence
	w.l.o.g. assume that $s_N \rightarrow s^*$ for $N\rightarrow \infty$ with
	\begin{equation}\label{eq:SCest4}
		s^* \in \bigcap_{N \in \tilde\cN} {\rm cl} \hat{\mu}_\cC^{\pm}(\omega_0,N)
		   \quad\text{and}\quad
		\inf_{s \in \mathfrak{u}^{\pm}_\cC} \vert s - s^* \vert \geq \epsilon_0\,.
	\end{equation}
	
	If $s^* \in \bigcap_{N \in \tilde\cN} \hat{\mu}_\cC^{\pm}(\omega_0,N)$,
	then  \eqref{eq:SCest4} immediately contradicts \eqref{eq:E1SCest}. More general,
	if $s^* \in \bigcap_{N \in \tilde\cN} {\rm cl}\hat{\mu}^{\pm}_\cC(\omega_0,N)$
	we find a sequence 	$(t_N)_{N\in\mathbb{N}}$ such that $t_N \in \hat{\mu}^{\pm}_\cC(\omega_0,N)$
	converging to $s^*$. As $\mathfrak{u}^{\pm}_\cC$ is a compact set this implies that
	$\inf_{s \in \mathfrak{u}^{\pm}_\cC} \vert s - t_N \vert \geq \epsilon_1>0$ for all $N$ large enough.
	The latter contradicts \eqref{eq:E1SCest}, which finishes the proof.

\end{proof}

\FloatBarrier
\newpage

\section{Proofs of the Results in Section \ref{scn:testing}}
\FloatBarrier

\subsection{Proof of Proposition \ref{prop:CoPEasSCB}}
\begin{proof}
	It is enough to show that the equivalence of
	\begin{equation*}
	\begin{aligned}
		&\textbf{(E1)}~\forall s \in  S :~  \hat l_N(s)  \leq \mu(s) \leq \hat{u}_N(s)\\
		&\textbf{(E2)}~\forall c \in \mathcal{F}( S ):~
			\hat{\cL}_{c} \subseteq \cL_c
			~\wedge~
			\hat{\cU}_{c} \subseteq \cU_c\,.
	\end{aligned}
	\end{equation*}	
	\emph{Case }\textbf{(E1)}$\Rightarrow$\textbf{(E2)}:
	Assume that $s^* \in \hat{\cU}_{c}$, i.e., $c(s^*) < l_N(s^*)$.
	By \textbf{(E1)} it holds that $ l_N(s^*) \leq \mu(s^*) $ which implies $c(s^*) < \mu(s^*) $,
	i.e., $s^* \in \hat{\cU}_{c}$.
	Similar, $s^* \in \cL_{c}$ is proven.\\
	\emph{Case }\textbf{(E2)}$\Rightarrow$\textbf{(E1)}: By \textbf{(E2)} it holds for $c = \mu$ that
	$\hat{\cU}_{\mu} \subseteq \cU_\mu = \emptyset$ and
	$S = S\setminus \cL_\mu \subseteq S \setminus \hat{\cL}_{\mu}$ which is equivalent to \textbf{(E1)}.
\end{proof}

\subsection{Proof of Theorem \ref{thm:Dette_Generalized}}
\begin{proof}
	Note that $
		\cL_{b+\delta} = \cU_{a+\delta} = \emptyset
	$.
	
	If $\delta \leq 0$, then $\mathbf{H}_{0}$ is true.
	Moreover, by the definition of $\delta$ we have that
	\begin{equation}\label{eq:hypGlobalRel}
	\begin{split}
		&\mathbb{P}^*\big[ \mathbf{H}_{0} \text{ is rejected } \big]\\
		&= \mathbb{P}^*\big[
					\hat{\cU}_{a + q_{\alpha}\tau_N\sigma} \neq \emptyset
					~ \vee ~
					\hat{\cL}_{b + q_{\alpha}\tau_N\sigma} \neq \emptyset
				 \big]\\
		&= \mathbb{P}^*\big[
					\hat{\cU}_{a + q_{\alpha}\tau_N\sigma}  \not\subseteq \cU_{a + \delta}
					~ \vee ~
					\hat{\cL}_{b + q_{\alpha}\tau_N\sigma} \not\subseteq \cL_{b + \delta}
				 \big]\\
		&= 1 - \mathbb{P}_*\big[
					\hat{\cU}_{a + q_{\alpha}\tau_N\sigma}  \subseteq \cU_{a + \delta}
					~ \wedge ~
					\hat{\cL}_{b + q_{\alpha}\tau_N\sigma} \subseteq \cL_{b + \delta}
				 \big]\,.
	\end{split}
	\end{equation}
	We first prove statement $(a)$. As $\delta = 0 $ the claim follows directly from \eqref{eq:hypGlobalRel}
	and Corollary \ref{cor:Extraction} for $\cA = \{ a \}$ and $\cB = \{ b \}$.
	
	Statement $(b)$ follows from \eqref{eq:hypGlobalRel} and Lemma \ref{lem:CoPEContainLemma}
	as $\delta<0$. Statement $(c)$ follows from Lemma \ref{lem:CoPEContainLemma} as well, because for $\delta >0$ and we have
		\begin{equation*}
	\begin{split}
		\mathbb{P}_*\big[ \mathbf{H}_{0} \text{ is rejected } \big]
		&= \mathbb{P}_*\big[
					\hat{\cU}_{a + q_{\alpha}\tau_N\sigma} \neq \emptyset
					~ \vee ~
					\hat{\cL}_{b + q_{\alpha}\tau_N\sigma} \neq \emptyset
				 \big]\\
		&= 1 - \mathbb{P}^*\big[
					\hat{\cU}_{a + q_{\alpha}\tau_N\sigma}  \subseteq \cU_{a + \delta}
					~ \wedge ~
					\hat{\cL}_{b + q_{\alpha}\tau_N\sigma} \subseteq \cL_{b + \delta}
				 \big]\,.
	\end{split}
	\end{equation*}
\end{proof}

\subsection{Proof of Theorem \ref{thm:RelTubeTest}}
\begin{proof}
	It is helpful to remember the definitions of the sets
	$$
	\mathcal{H}_0 = S \setminus (\, \cU_{a} \cup \cL_{b} \,)\quad\text{ and }\quad
	\mathcal{H}_1 = \cU_{a} \cup \cL_{b}
	$$
	of true null hypotheses and true alternative hypotheses respectively.
	
	We first prove $(a)$.
	As the definition of $\delta$ implies
	$$
	\cU_{a} = \cU_{a - \delta}\,, ~ ~ ~ ~
	\cL_{b} = \cL_{b + \delta}\,,
	$$
	it follows that
	\begin{equation*}
	\begin{split}
		&\Prb^*\big[\, \exists s \in \mathcal{H}_0:~ s \in \hat{\mathcal{H}}_1 \,\big]\\
		&= \Prb^*\Big[\, \exists s \in \mathcal{H}_0:~
					s \in
					\hat{\cU}_{a+q_\alpha\tau_N \sigma}
					~\cup~
					\hat{\cL}_{b-q_\alpha\tau_N \sigma}
				\,\Big]\\
		&= 1 -  \Prb_*\Big[\, \forall s \in \mathcal{H}_0:~
					s \in
					S \setminus \big(\, \hat{\cU}_{a+q_\alpha\tau_N \sigma}
					~\cup~
					\hat{\cL}_{b-q_\alpha\tau_N \sigma}
					 \,\big)
				\,\Big]\\
		&\leq 1 -  \Prb_*\Big[\, \forall s \in \mathcal{H}_0:~
					s \in
					S \setminus 
					\big(\,
					\hat{\cU}_{a-\delta+q_\alpha\tau_N \sigma}
					~\cup~
					\hat{\cL}_{b+\delta-q_\alpha\tau_N \sigma}
					 \,\big)
				\,\Big]\\
		&= 1 -  \Prb_*\Big[\,
					\mathcal{H}_0
					\subseteq
					S \setminus \big(\,
						\hat{\cU}_{a-\delta+q_\alpha\tau_N \sigma}
									~\cup~
						  \hat{\cL}_{b+\delta-q_\alpha\tau_N \sigma}
					\,\big)
				\,\Big]\\
		&\leq 1 -  \Prb_*\Big[\,
						\hat{\cU}_{a-\delta+q_\alpha\tau_N \sigma} \subseteq \cU_{a-\delta}
					~\wedge~
						\hat{\cL}_{b+\delta-q_\alpha\tau_N \sigma} \subseteq \cL_{b+\delta}
					 \,\Big]\,.
	\end{split}
	\end{equation*}
	Thus, applying the limes superior to both sides, Corollary \ref{cor:Extraction} with
	$\cA = \big\{ a - \delta \big\}$ and $\cB = \big\{ b + \delta \big\}$ yields
	\begin{equation*}
	\begin{split}
		\limsup_{N\rightarrow\infty}
		\Prb^*\big[\, \exists s \in \mathcal{H}_0:~ s \in \hat{\mathcal{H}}_1 \,\big]
			\leq 1 -  \Prb\Big[\, \mathfrak{T}_{a-\delta, b+\delta}(G) < q_\alpha \,\Big]
			 \leq \alpha\,.
	\end{split}
	\end{equation*}
	The second inequality is a consequence of the first and the observation that
	\begin{equation*}
	\begin{split}
		\Prb^*\big[\, \exists s \in \mathcal{H}_0:~ s \in \hat{\mathcal{H}}_1 \,\big]
		= 1 - \Prb_*\big[\, \mathcal{H}_0 \subseteq \hat{\mathcal{H}}_0  \,\big]
		= 1 - \Prb_*\big[\, \hat{\mathcal{H}}_1 \subseteq \mathcal{H}_1 \,\big]\,.
	\end{split}		
	\end{equation*}
	This finishes the proof of $(a)$.
	
	We now prove $(b)$.	Note that $\delta = 0$.
	The case $\mathcal{H}_0 = S$ implies
	$
	\cU_{a} = \emptyset
	$
	and
	$
	\cL_{b} = \emptyset
	$.
	Again Corollary \ref{cor:Extraction} with
	$\cA = \big\{ a \big\}$ and $\cB = \big\{ b \big\}$ yields
	\begin{equation*}
	\begin{split}
		\liminf_{N\rightarrow\infty}\Prb_*\big[\, \exists s \in \mathcal{H}_0:~ s \in \hat{\mathcal{H}}_1 \,\big]
			&= 1 - \limsup_{N\rightarrow\infty}
				\Prb^*\Big[\,
					\hat{\cU}_{a+q_\alpha\tau_N \sigma} = \emptyset	
					~\wedge~
					\hat{\cL}_{b-q_\alpha\tau_N \sigma} = \emptyset
				\,\Big]\\
			&\geq 1 -  \Prb\Big[\, \mathfrak{T}_{a-\delta, b+\delta} (G) \leq q_\alpha \,\Big]\,.
	\end{split}
	\end{equation*}
	
	To prove the case $\inf_{s\in S} a(s) - b(s) \geq M > 0$ we observe that
	\begin{equation*}
	\begin{split}
		&\Prb_*\Big[\,
			\hat{\cU}_{a+q_\alpha\tau_N \sigma} \subseteq \cU_{a}
			~\wedge~
			\hat{\cL}_{b-q_\alpha\tau_N \sigma} \subseteq \cL_{b}
			\,\Big]\\
		&\geq \Prb_*\Big[\,
					\mathcal{H}_0
					\subseteq
					S \setminus \big(\,
						\hat{\cU}_{a+q_\alpha\tau_N \sigma}
									~\cup~
						\hat{\cL}_{b-q_\alpha\tau_N \sigma}
					\,\big)
					\,\Big]\\
		&~~~+ \Prb_*\Big[\, \hat{\cU}_{a+q_\alpha\tau_N \sigma} \subseteq \cU_{b} \,\Big]
			+ \Prb_*\Big[\, \hat{\cL}_{b-q_\alpha\tau_N \sigma} \subseteq \cL_{a}  \,\Big]
			- 2 \,,
	\end{split}
	\end{equation*}
 	since $\mathcal{H}_0 = S \setminus (\, \cU_{a} \cup \cL_{b} \,)$.
 	Using this yields
	\begin{equation*}
	\begin{split}
		\Prb^*\big[\, \exists s \in \mathcal{H}_0:~ s \in \hat{\mathcal{H}}_1 \,\big]
		&= 1 -  \Prb_*\Big[\,
					\mathcal{H}_0
					\subseteq
					S \setminus \big(\,
						\hat{\cU}_{a+q_\alpha\tau_N \sigma}
									~\cup~
						 \hat{\cL}_{b-q_\alpha\tau_N \sigma}
					\,\big)
				\,\Big]\\
		&\geq - 1
			- \Prb_*\Big[\,
				\hat{\cU}_{a+q_\alpha\tau_N \sigma} \subseteq \cU_{a}
					~\wedge~
				\hat{\cL}_{b-q_\alpha\tau_N \sigma} \subseteq \cL_{b}
			\,\Big]\\
		&~~~ + \Prb_*\Big[\, \hat{\cU}_{a+q_\alpha\tau_N \sigma} \subseteq \cU_{b} \,\Big]
			 + \Prb_*\Big[\,
					\hat{\cL}_{b-q_\alpha\tau_N \sigma} \subseteq \cL_{a}
				\,\Big]\,.
	\end{split}
	\end{equation*}
	Applying the limes superior part of Corollary \ref{cor:Extraction} with
	$\cA = \big\{ a \big\}$ and $\cB = \big\{ b \big\}$
	and Lemma \ref{lem:CoPEContainLemma}, which
	shows that the last two probabilities on the r.h.s. converge to $1$,
	finishes the proof of $(b)$.

	To prove the second claim of $(b)$. Recall the definitions
	\begin{equation}\label{eq:DeltapmN}
		\Delta^a_N(s) = \tfrac{\mu(s) - a(s)}{\tau_N \sigma(s)}\,\quad\text{ and }\quad
		\Delta^b_N(s) = \tfrac{\mu(s) - b(s)}{\tau_N \sigma(s)}\,.
	\end{equation}
	Hence $ \mathcal{H}_1^{\eta_N} = \cU_{a+\eta_N} \cup \cL_{b-\eta_N} $
	implies that
	\begin{equation*}
		\inf_{s \in \cU_{a+\eta_N}} \Delta^a_N(s) \geq \tfrac{\eta_N}{\mathfrak{O}\tau_N}\,,~~~
		\sup_{s \in \cL_{b-\eta_N}}	\Delta^b_N(s) \leq -\tfrac{\eta_N}{\mathfrak{O}\tau_N}\,.
	\end{equation*}
	Thus,
	\begin{equation*}
	\begin{split}
		&\Prb_*\big[\, \forall s \in \mathcal{H}_1^{\eta_N}:~s \in \hat{\mathcal{H}}_1 \,\big]\\
		&= \Prb_*\Big[\,
					\cU_{a+\eta_N} \cup\,\cL_{b-\eta_N} 
						\subseteq
					 \hat{\cU}_{a+q_\alpha\tau_N \sigma} \cup\hat{\cL}_{b-q_\alpha\tau_N \sigma} 
					\,\Big]\\
		&\geq \Prb_*\Big[\,
				 \cU_{a+\eta_N} \subseteq \hat{\cU}_{a+q_\alpha\tau_N \sigma} 
				 	~\wedge~
				 \cL_{b-\eta_N} \subseteq \hat{\cL}_{b-q_\alpha\tau_N \sigma}
				\,\Big]\\
		&= \Prb_*\Bigg[\,
				\inf_{ s \in \cU_{a+\eta_N}} G_N(s) + \Delta^+_N(s) > q_\alpha
					~\wedge~
				\sup_{ s \in \cL_{b-\eta_N}} G_N(s) + \Delta^-_N(s) < -q_\alpha
				\,\Bigg]\\
		&\geq \Prb_*\Bigg[\,
				\inf_{ s \in \cU_{a}} G_N(s) + \tfrac{\eta_N}{\mathfrak{O}\tau_N} > q_\alpha
					~\wedge~
				\sup_{ s \in \cL_{b}} G_N(s) -\tfrac{\eta_N}{\mathfrak{O}\tau_N} < -q_\alpha
				\,\Bigg]\\
		&\geq \Prb_*\Bigg[\,
			\inf_{ s \in \cU_{a} \cap {\rm cl}\,\mu^{-1}_{\{a\}_{\tilde{\eta}}}}
					\hspace{-0.3cm} G_N(s) > q_\alpha - \tfrac{\eta_N}{\mathfrak{O}\tau_N}
				~\wedge~\hspace{-0.3cm}
			\inf_{ s \in \cU_{a} \cap S \setminus {\rm cl}\,\mu^{-1}_{\{a\}_{\tilde{\eta}}} }
					 \hspace{-0.4cm}\frac{K + Z_N(s)}{\tau_N} > q_\alpha\\
	&~~~~~~~~~~\wedge~
			\sup_{ s \in \cL_{b} \cap {\rm cl}\,\mu^{-1}_{\{b\}_{\tilde{\eta}}}}\hspace{-0.3cm}
					 G_N(s)  < -q_\alpha + \tfrac{\eta_N}{\mathfrak{O}\tau_N}
				~\wedge~\hspace{-0.3cm}
			\sup_{ s \in  \cL_{b} \cap S \setminus {\rm cl}\,\mu^{-1}_{\{b\}_{\tilde{\eta}}}}
					 \hspace{-0.4cm}\frac{K + Z_N(s)}{\tau_N} < -q_\alpha
			\,\Bigg]\\
		&\geq \Prb_*\Bigg[\,
				\inf_{ s \in \cU_{a} \cap {\rm cl}\,\mu^{-1}_{\{a\}_{\tilde{\eta}}}} G_N(s) > q_\alpha - \tfrac{\eta_N}{\mathfrak{O}\tau_N} \,\Bigg]
			+ 2\Prb_*\Bigg[\, \inf_{ s \in S} \frac{K + Z_N(s)}{\tau_N} > q_\alpha \,\Bigg]\\
	&~~~~~~+ \Prb_*\Bigg[\,
		\sup_{ s \in  \cL_{b} \cap {\rm cl}\,\mu^{-1}_{\{b\}_{\tilde{\eta}}}} G_N(s) < -q_\alpha + \tfrac{\eta_N}{\mathfrak{O}\tau_N} 
				\,\Bigg]
		  - 3\\
	&\xrightarrow{N\rightarrow\infty} 1\,,
	\end{split}
	\end{equation*}
	since all four inner probabilities on the last line converge to one, which follows from the fact that
	$$
		\inf_{s \in S} \tau_N^{-1} Z_N(s)\,, ~~~
		\inf_{ s \in \cU_{a} \cap {\rm cl}\,\mu^{-1}_{\{a\}_{\tilde{\eta}}}}
				\hspace{-0.3cm} G_N(s)\,, ~~~
		\inf_{ s \in \cL_{b} \cap {\rm cl}\,\mu^{-1}_{\{b^-\}_{\tilde{\eta}}}}
				\hspace{-0.3cm} -G_N(s)		
	$$
	are all asymptotically tight. For the latter two this follows from
	the weak convergence $G_N \rightsquigarrow G$ on ${\rm cl}\,\mu^{-1}_{\{a\}_{\tilde{\eta}}}$
	and Lemma \ref{lem:Tweak}.
	This means the proof of $(c)$ is complete.

	Finally, we prove $(d)$.
	Since $\Delta > 0$ it holds that
	\begin{equation*}
		\inf_{s \in \cU_{a}} \Delta^a_N(s) \geq\tfrac{\Delta}{\mathfrak{O} \tau_N}\,,~ ~ ~
		\sup_{s \in \cL_{b}}	\Delta^b_N(s) \leq -\tfrac{\Delta}{\mathfrak{O}\tau_N}
	\end{equation*}
	The proof for $\mathcal{H}_1$ therefore is essentially identical to the proof
	of the last statement in part $(b)$.
    Moreover, we have that
	\begin{equation*}
	\begin{split}
		\Prb_*\big[\, \forall s \in \mathcal{H}_0:~ s \in \hat{\mathcal{H}}_0 \,\big]
		&= \Prb_*\Big[\,
				\mathcal{H}_0
				\subseteq
				S \setminus
				\big(\, \hat{\cU}_{a+q_\alpha\tau_N \sigma}
							 ~\cup~
						\hat{\cL}_{b-q_\alpha\tau_N \sigma} \,\big)
				 \,\Big]\\
		&\geq \Prb_*\Big[\,
				\hat{\cU}_{a+q_\alpha\tau_N \sigma} \subseteq \cU_{a-\delta}
					~\wedge~
				\hat{\cL}_{b-q_\alpha\tau_N \sigma}\subseteq \cL_{b+\delta}
				\,\Big]\\
		&\geq \Prb_*\Big[\,
					\hat{\cU}_{a+q_\alpha\tau_N \sigma} \subseteq \cU_{a-\delta}
					\,\Big]
				+ \Prb_*\Big[\,
					\hat{\cL}_{b-q_\alpha\tau_N \sigma} \subseteq \cL_{b+\delta} 
					\,\Big] - 1\,.
	\end{split}
	\end{equation*}
	Since $\inf_{s\in S} \big( a(s) - \delta  +a(s) \big) > \delta $ and  $\sup_{s\in S} \big( b(s) - b(s) - \delta  \big) < - \delta $ the claim follows from Lemma \ref{lem:CoPEContainLemma}.
\end{proof}

\subsection{Proof of Theorem \ref{thm:Equiv_2sides}}
\begin{proof}
	If $\delta \geq 0$, then $\mathbf{H}_{0}$ is true, i.e.,
	$
		\cL_{b} = \cU_{a} = \emptyset
	$.
	Moreover, by the definition of $\delta$ we have that
	\begin{equation}\label{eq:hypGlobalEquiv}
	\begin{split}
		\mathbb{P}^*\big[ \mathbf{H}_{0} \text{ is rejected } \big]
		&= \mathbb{P}^*\big[
					\hat{\cU}_{a + q_{\alpha}\tau_N\sigma} = \emptyset
					~ \wedge ~
					\hat{\cL}_{b + q_{\alpha}\tau_N\sigma} = \emptyset
				 \big]\\
		&= \mathbb{P}^*\big[
					\hat{\cU}_{a + q_{\alpha}\tau_N\sigma}  \subseteq \cU_{a + \delta}
					~ \wedge ~
					\hat{\cL}_{b + q_{\alpha}\tau_N\sigma} \subseteq \cL_{b + \delta}
				 \big]\,.
	\end{split}
	\end{equation}
	We first prove statement $(a)$. As $\delta = 0 $ the claim follows directly from \eqref{eq:hypGlobalEquiv}
	and Corollary \ref{cor:Extraction} for $\cA = \{ a \}$ and $\cB = \{ b \}$.
	
	Statement $(b)$ follows from \eqref{eq:hypGlobalEquiv} and Lemma \ref{lem:CoPEContainLemma}
	as $\delta>0$. Statement $(c)$ follows from Lemma \ref{lem:CoPEContainLemma} as well, because for $\delta <0$ and we have
		\begin{equation*}
	\begin{split}
		\mathbb{P}_*\big[ \mathbf{H}_{0} \text{ is rejected } \big]
		&= \mathbb{P}_*\big[
					\hat{\cU}_{a + q_{\alpha}\tau_N\sigma} = \emptyset
					~ \wedge ~
					\hat{\cL}_{b + q_{\alpha}\tau_N\sigma} = \emptyset
				 \big]\\
		&= \mathbb{P}_*\big[
					\hat{\cU}_{a + q_{\alpha}\tau_N\sigma}  \subseteq \cU_{a + \delta}
					~ \wedge ~
					\hat{\cL}_{b + q_{\alpha}\tau_N\sigma} \subseteq \cL_{b + \delta}
				 \big]\,.
	\end{split}
	\end{equation*}
\end{proof}

\subsection{Proof of Theorem \ref{thm:lequivTest}}
\begin{proof}
	Recall that
	$
		\mathcal{H}_0 = S \setminus \big( \cU_{b} \cap \cL_{a} \big)
	$
	and
	$
		\mathcal{H}_1 = \cU_{b} \cap \cL_{a} 
	$.
	
	We first prove $(a)$.
	We obtain
	\begin{equation*}
	\begin{split}
		&\Prb^*\big[\, \exists s \in \mathcal{H}_0:~ s \in \hat{\mathcal{H}}_1 \,\big]\\
		&= 1 -  \Prb_*\big[\, \forall s \in \mathcal{H}_0:~ s \in \hat{\mathcal{H}}_0 \,\big]\\
		&= 1 -  \Prb_*\Big[\,
						S \setminus \big( \cU_{b} \cap \cL_{a} \big)
							\subseteq
						S \setminus \big(
									\hat{\cU}_{b+q_\alpha\tau_N\sigma}
								\cap
									\hat{\cL}_{a-q_\alpha\tau_N\sigma}
								\big)
							 \,\Big]\\
		&\leq 1 -  \Prb_*\Big[\,
						 \hat{\cU}_{b+q_\alpha\tau_N\sigma} \subseteq \cU_{b}
									 ~\wedge~
						\hat{\cL}_{a-q_\alpha\tau_N\sigma} \subseteq \cL_{a}
								  \,\Big]\\
		&\leq 1 -  \Prb_*\Big[\,
						 \hat{\cU}_{b-\delta+q_\alpha\tau_N\sigma} \subseteq \cU_{b-\delta}
									 ~\wedge~
						 \hat{\cL}_{a+\delta-q_\alpha\tau_N\sigma} \subseteq \cL_{a+\delta}
								  \,\Big]\,.
	\end{split}
	\end{equation*}
	Thus, applying the limes superior to both sides of the above inequality and Corollary \ref{cor:Extraction} with
	$\cA = \big\{ b-\delta \big\}$ and $\cB = \big\{ a + \delta \big\}$ yields the claim.
	The second inequality follows from the first and
	\begin{equation*}
	\begin{split}
		\Prb^*\big[\, \exists s \in \mathcal{H}_0:~ s \in \hat{\mathcal{H}}_1 \,\big]
		= 1 - \Prb_*\big[\, \mathcal{H}_0 \subseteq \hat{\mathcal{H}}_0  \,\big]
		= 1 - \Prb_*\big[\, \hat{\mathcal{H}}_1 \subseteq \mathcal{H}_1 \,\big]\,.
	\end{split}		
	\end{equation*}
	
	In order to proof $(b)$ we use our previous calculation we obtain
	\begin{equation*}
	\begin{split}
		&\Prb^*\big[\, \exists s \in \mathcal{H}_0:~ s \in \hat{\mathcal{H}}_1 \,\big]\\
				&= 1 -  \Prb_*\Big[\,
						S \setminus \big( \cU_{b} \cap \cL_{a} \big)
							\subseteq
						S \setminus \big(
									\hat{\cU}_{b+q_\alpha\tau_N\sigma}
								\cap
									\hat{\cL}_{a-q_\alpha\tau_N\sigma}
								\big)
							 \,\Big]\\
				&\geq 1 -  \Prb_*\Big[\,
							\big( 
							S \setminus\cU_{b} \subseteq S \setminus\hat{\cU}_{b+q_\alpha\tau_N\sigma}
									 		~\vee~
							S \setminus\cU_{b} \cap \hat{\cL}_{a-q_\alpha\tau_N\sigma}
							\neq \emptyset  
							 \big)
\\
				&~~~~~~~~~~~~~~~~~~\wedge~
							\big( S \setminus\cL_{a} \subseteq S \setminus\hat{\cL}_{a-q_\alpha\tau_N\sigma} 
									 		~\vee~
							S \setminus \cL_{a} \cap \hat{\cU}_{b+q_\alpha\tau_N\sigma}
								\neq \emptyset  
							\big)
				  \,\Big]\\
		&\geq 1 - \Prb_*\Big[\,
						\hat{\cU}_{b+q_\alpha\tau_N\sigma} \subseteq \cU_{b}
									 ~\wedge~
						 \hat{\cL}_{a-q_\alpha\tau_N\sigma} \subseteq \cL_{a}
					,\Big]
				- \Prb_*\big[\, C_N
								  \,\big]\,.
	\end{split}
	\end{equation*}
	Here
	\begin{equation*}
		C_N = \hat{\cU}_{b+q_\alpha\tau_N\sigma} \cap \big( S \setminus \cL_{a} \big)
								\neq \emptyset  
						 ~\vee~
				 \big( S \setminus \cU_{b} \big) \cap \hat{\cL}_{a-q_\alpha\tau_N\sigma}
							\neq \emptyset 
	\end{equation*}
	Applying the limes inferior to this inequality, Corollary \ref{cor:Extraction} with
	$\cA = \big\{ b-\delta \big\}$ and $\cB = \big\{ a + \delta \big\}$ on
	the first probability and recognizing that
	$ \Prb_*\big[\, C_N \,\big]$ converges to one by Lemma \ref{lem:IntersectionCope},
	since $\inf_{s \in S} \big( a(s) - b(s) \big) > 0$,
	yields the claim.

	For the second claim recall the notation introduced in \eqref{eq:DeltapmN}.
	Since $\mathcal{H}_1^{\eta_N} = \cU_{ b + \eta_N } \cap \cL_{a - \eta_N}  $
	implies that
	\begin{equation*}
		\inf_{s \in \cU_{b+\eta_N}} \Delta^b_N(s) \geq \tfrac{\eta_N}{\mathfrak{O}\tau_N}\,,~~~
		\sup_{s \in \cL_{a-\eta_N}}	\Delta^a_N(s) \leq -\tfrac{\eta_N}{\mathfrak{O}\tau_N}\,.
	\end{equation*}
	Thus,
	$
	\Prb_*[\, A \cap B \subseteq C \cap B \,]
		\geq \Prb_*[\, A \subseteq C ~\wedge~ B \subseteq C \,]
	$
	for all $A,B,C,D$ yields
	\begin{equation*}
	\begin{split}
		&\Prb_*\big[\, \forall s \in \mathcal{H}_1^{\eta_N}:~ s \in \mathcal{H}_1
						\,\big]\\
		&= \Prb_*\Big[\,
					\cU_{ b + \eta_N } \cap \cL_{a - \eta_N} 
					\subseteq
					\hat{\cU}_{b+q_\alpha\tau_N\sigma} \cap \hat{\cL}_{a-q_\alpha\tau_N\sigma}
					\,\Big]\\
		&\geq \Prb_*\Bigg[\,
				\inf_{ s \in \cU_{b+\eta_N}} G_N(s) + \Delta^b_N(s) > q_\alpha
								~\wedge~
				\sup_{ s \in \cL_{a-\eta_N} } G_N(s) + \Delta^a_N(s) < -q_\alpha
				\,\Bigg]
	\end{split}
	\end{equation*}	
	The rest of the proof is almost identical to the proof of Theorem \ref{thm:RelTubeTest}b).
	
	\textbf{Proof of $(c)$:}
	The proof is similar to the proof of part (c) from Theorem \ref{thm:RelTubeTest}.
\end{proof}

\FloatBarrier
\newpage

\section{Proofs of the Results in Section \ref{scn:Scheffe}}
\begin{lemma}\label{lemma:maxScalarProduct}
	Let $ l\in \mathbb{R} $, $ \boldsymbol{a}, \boldsymbol{w} \in \mathbb{R}^K $
	and $  \tfrac{l^2}{\Vert a \Vert^2} \leq 1$.
	Let $ P = \big( \boldsymbol{v}_1^T,\ldots, \boldsymbol{v}_{k-1}^T \big) \in \mathbb{R}^{K-1 \times K}$,
	where $\boldsymbol{v}_1,\ldots, \boldsymbol{v}_{K-1}$ form a basis
	of the hyperplane $E = \{ \boldsymbol{x}\in \mathbb{R}^K:~\boldsymbol{a}^T\boldsymbol{x} = 0 \}$.
	Let $P_E^\bot = P(P^TP)^{-1}P^T$ the orthogonal projection onto $E$.
		\begin{equation*}
		\begin{split}
			a)&~~~ \max_{\substack{\Vert \boldsymbol{x} \Vert = 1\\ \boldsymbol{a}^T\boldsymbol{x} = l} } \pm \boldsymbol{x}^T\boldsymbol{w}
			= \pm l\frac{\boldsymbol{a}^T\boldsymbol{w} }{\Vert \boldsymbol{a} \Vert^2} + \sqrt{ 1 - \tfrac{l^2}{\Vert \boldsymbol{a} \Vert^2} } \big\Vert P_E^\bot\boldsymbol{w} \big\Vert\\
			b)&~~~ \max_{\substack{\Vert \boldsymbol{x} \Vert = 1\\ \boldsymbol{a}^T\boldsymbol{x} = l} }
				\vert \boldsymbol{x}^T\boldsymbol{w} \vert 
			= \left\vert l \frac{ \boldsymbol{a}^T\boldsymbol{w}  }{\Vert \boldsymbol{a} \Vert^2}  \right\vert +  \sqrt{ 1 - \tfrac{l^2}{\Vert \boldsymbol{a} \Vert^2} } \big\Vert P_E^\bot\boldsymbol{w} \big\Vert
		\end{split}
		\end{equation*}
\end{lemma}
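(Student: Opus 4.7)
The plan is to reduce both identities to a single orthogonal decomposition of $\boldsymbol{x}$ and $\boldsymbol{w}$ along and orthogonal to $\boldsymbol{a}$, after which the optimization becomes a one-parameter Cauchy--Schwarz problem.

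First I would write every $\boldsymbol{x}$ satisfying the constraints as
\begin{equation*}
	\boldsymbol{x} = \frac{l}{\Vert \boldsymbol{a} \Vert^2}\,\boldsymbol{a} + \boldsymbol{y},
	\quad \boldsymbol{y} \in E,\quad
	\Vert \boldsymbol{y} \Vert^2 = 1 - \tfrac{l^2}{\Vert \boldsymbol{a} \Vert^2}.
\end{equation*}
The first identity is forced by $\boldsymbol{a}^T\boldsymbol{x} = l$ and the orthogonality $\boldsymbol{a}^T\boldsymbol{y} = 0$; the norm condition then follows from $\Vert \boldsymbol{x} \Vert = 1$ and Pythagoras. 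The hypothesis $l^2 / \Vert \boldsymbol{a} \Vert^2 \leq 1$ is exactly what is needed to make the set of admissible $\boldsymbol{y}$'s nonempty. Dually I decompose $\boldsymbol{w} = \tfrac{\boldsymbol{a}^T \boldsymbol{w}}{\Vert \boldsymbol{a} \Vert^2}\,\boldsymbol{a} + P_E^\bot \boldsymbol{w}$, where $P_E^\bot$ is the orthogonal projection onto $E$.

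Taking the inner product with $\boldsymbol{w}$ and using $\boldsymbol{y}^T \boldsymbol{a} = 0$ and $\boldsymbol{a}^T P_E^\bot \boldsymbol{w} = 0$ collapses everything to
\begin{equation*}
	\boldsymbol{x}^T\boldsymbol{w} = l\,\frac{\boldsymbol{a}^T\boldsymbol{w}}{\Vert \boldsymbol{a} \Vert^2} + \boldsymbol{y}^T P_E^\bot \boldsymbol{w}.
\end{equation*}
Since the first term is fixed by the constraints and $\boldsymbol{y}$ ranges freely over the sphere of radius $\sqrt{1 - l^2/\Vert \boldsymbol{a} \Vert^2}$ inside $E$, Cauchy--Schwarz (attained at $\boldsymbol{y} = \pm\sqrt{1-l^2/\Vert \boldsymbol{a} \Vert^2}\,P_E^\bot \boldsymbol{w} / \Vert P_E^\bot \boldsymbol{w} \Vert$, or trivially if $P_E^\bot \boldsymbol{w} = 0$) immediately yields part (a) with either sign.

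For part (b), the same identity gives $\vert \boldsymbol{x}^T\boldsymbol{w} \vert = \vert\, c + \boldsymbol{y}^T P_E^\bot \boldsymbol{w}\,\vert$ with $c = l\,\boldsymbol{a}^T\boldsymbol{w}/\Vert \boldsymbol{a} \Vert^2$ a constant and $\boldsymbol{y}^T P_E^\bot \boldsymbol{w}$ ranging over the interval $[-B,B]$ with $B = \sqrt{1-l^2/\Vert \boldsymbol{a}\Vert^2}\,\Vert P_E^\bot \boldsymbol{w} \Vert$. Choosing the sign of $\boldsymbol{y}$ to match the sign of $c$ attains $\vert c \vert + B$, which is the claimed formula. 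The only mildly subtle point is the degenerate case $P_E^\bot \boldsymbol{w} = 0$, where the supremum in $\boldsymbol{y}$ is $0$ and the formula still holds with $B = 0$; I do not expect any real obstacle beyond bookkeeping these degenerate subcases.
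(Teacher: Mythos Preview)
Your proof is correct and follows essentially the same approach as the paper: decompose $\boldsymbol{x}$ orthogonally along $\boldsymbol{a}$ and into $E$, reduce to optimizing $\boldsymbol{y}^T P_E^\bot\boldsymbol{w}$ over a sphere in $E$, and apply Cauchy--Schwarz (the paper phrases this last step geometrically as ``the smallest angle between $\pm\boldsymbol{w}$ and $E$ is attained at the projection'', which is the same thing). Your treatment is actually slightly more careful in handling the degenerate case $P_E^\bot\boldsymbol{w}=0$.
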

\begin{proof}
 For $ \boldsymbol{x} \in \mathbb{R}^K $ we define $\hat{\boldsymbol{x}} = \boldsymbol{x} / \Vert \boldsymbol{x} \Vert$.
 Every $ \boldsymbol{x} \in \{ \boldsymbol{x} \in \mathbb{R}^K:~\boldsymbol{a}^T\boldsymbol{x} = l \}$
 can be decomposed into
 \begin{equation*}
 	\boldsymbol{x} = l\frac{\boldsymbol{a}}{\Vert \boldsymbol{a} \Vert^2} + \boldsymbol{x}_E\,,~~\text{with }\boldsymbol{x}_E \in E\,.
 \end{equation*} 
 Using this decomposition, the fact that $\hat{\boldsymbol{x}}_E^T\boldsymbol{w}$ does not depend on $\Vert \boldsymbol{x}_E \Vert$
 and that the smallest angle between the vector $\pm\boldsymbol{w}$ and the linear subspace $E$ is the angle between $\boldsymbol{w}$ and its orthogonal projection $P_E^\bot(\pm\boldsymbol{w})$ onto $E$, we obtain
 \begin{equation*}
 \begin{split}
 	\max_{\substack{\Vert \boldsymbol{x} \Vert = 1\\ \boldsymbol{a}^T\boldsymbol{x} = l} } \boldsymbol{x}^T(\pm\boldsymbol{w})
 	&= \max_{ \Vert \boldsymbol{x}_E \Vert^2 = 1 - \tfrac{l^2}{\Vert \boldsymbol{a} \Vert^2} } \pm l\frac{\boldsymbol{a}^T\boldsymbol{w}}{\Vert \boldsymbol{a} \Vert^2} + \boldsymbol{x}_E^T(\pm\boldsymbol{w})\\
 	&= \pm l\frac{\boldsymbol{a}^T\boldsymbol{w}}{\Vert \boldsymbol{a} \Vert^2}
 	  + \max_{ \Vert \boldsymbol{x}_E \Vert^2 = 1 - \tfrac{l^2}{\Vert \boldsymbol{a} \Vert^2} }  \Vert \boldsymbol{x}_E \Vert \hat{\boldsymbol{x}}_E^T(\pm\boldsymbol{w})\\
 	&= \pm l\frac{\boldsymbol{a}^T\boldsymbol{w}}{\Vert \boldsymbol{a} \Vert^2}
 	  + \sqrt{1 - \tfrac{l^2}{\Vert \boldsymbol{a} \Vert^2}} \max_{ \Vert \hat{\boldsymbol{x}}_E \Vert^2 = 1  } \hat{\boldsymbol{x}}_E^T(\pm\boldsymbol{w})\\
 	&= \pm l\frac{\boldsymbol{a}^T\boldsymbol{w}}{\Vert \boldsymbol{a} \Vert^2}
 	  + \sqrt{1 - \tfrac{l^2}{\Vert \boldsymbol{a} \Vert^2}} \frac{ \boldsymbol{w}^T P_E^\bot  }{ \big\Vert P_E^\bot \boldsymbol{w} \big\Vert} \boldsymbol{w}\\
 	&= \pm l\frac{\boldsymbol{a}^T\boldsymbol{w}}{\Vert \boldsymbol{a} \Vert^2}
 	  + \sqrt{1 - \tfrac{l^2}{\Vert \boldsymbol{a} \Vert^2}} \big\Vert P_E^\bot \boldsymbol{w} \big\Vert\,.
 \end{split}
 \end{equation*}
 This proves equation a). Finally, $b)$ is an immediate consequence of $a)$. 
\end{proof}
\subsection{Proof of Corollary \ref{thm:ScheffeSSSCoPE_zero}}
\begin{proof}
	Assumptions \textbf{(A1)}-\textbf{(A4)} are trivially satisfied in this setup, since $ \mathbb{S}^{K-1}$ is compact, $\mu$ and $c$ continuous
	and $G_N$ defined in \eqref{eq:ContrastULT} has continuous sample paths.
	Let $ \boldsymbol{\varepsilon} \sim \mathcal{N}(0, I_{K \times K})$.  Applying
	Theorem \ref{thm:MainSCoPES} and Lemma \ref{lemma:maxScalarProduct} yields
	\begin{equation*}
	\begin{split}
	\lim_{N \rightarrow \infty}
			\Prb \Big[\, \hat{\cU}_{\tau_N q \sigma } \subseteq \cU_0 ~\wedge~ \hat{\cL}_{-\tau_N q \sigma } \subseteq  \cL_0 \,\Big]
		 	&= \Prb\Bigg[~ \max_{ \substack{\Vert \boldsymbol{a} \Vert = 1\\ \boldsymbol{a}^T\boldsymbol{\beta} = 0 } }
				\Bigg\vert \frac{ \boldsymbol{a}^T\mathfrak{X}^{1/2}\boldsymbol{\varepsilon} }{ \sqrt{\boldsymbol{a}^T\mathfrak{X}\boldsymbol{a}}}  \Bigg\vert   \leq q ~\Bigg]\\
		 	&= \Prb\Big[~ 
				\left\Vert P(P^TP)^{-1}P^T\boldsymbol{\varepsilon} \right\Vert \leq q ~\Big]\,.
	\end{split}
	\end{equation*}
	If $\boldsymbol{\beta} = 0$ then
	$P = I_{ K \times K}$. If $\boldsymbol{\beta} \neq 0$, then
	$P\in\mathbb{R}^{K \times (K-1)}$ is a matrix
	having as columns a basis of the hyperplane
	$ \big\{ \boldsymbol{a} \in \mathbb{R}^K:~
	(\mathfrak{X}^{-1/2}\boldsymbol{\beta})^T\boldsymbol{a} = 0 \big\}
	$.
	The claim follows since $ P(P^TP)^{-1}P^T $ is an
	orthogonal projection and therefore
	$$P(P^TP)^{-1}P^T\boldsymbol{\varepsilon} \sim \mathcal{N}\big( 0, I_{(K-1 + \mathds{1}_{\boldsymbol{\beta}=0}) \times (K - 1 + \mathds{1}_{\boldsymbol{\beta}=0})} \big)\,.$$
\end{proof}

\FloatBarrier

\end{document}